\documentclass[11pt,reqno]{amsart}

\usepackage[T1]{fontenc}
\usepackage[utf8]{inputenc}
\usepackage{lmodern}
\usepackage{amsmath,amssymb,mathtools}
\usepackage{enumitem}
\usepackage[a4paper,textwidth=6.35in,textheight=9.15in,centering]{geometry}
\usepackage{microtype}
\usepackage[hidelinks]{hyperref}
\usepackage{cleveref}
\hypersetup{
  pdftitle={The Solvable Automorphism Groups of Function Fields over Finite Fields}
}
\usepackage{array}
\usepackage{booktabs}
\usepackage{tabularx}

\allowdisplaybreaks
\setlist{topsep=2pt,itemsep=1pt,parsep=0pt,partopsep=0pt}
\newtheorem{theorem}{Theorem}[section]
\newtheorem{proposition}[theorem]{Proposition}
\newtheorem{lemma}[theorem]{Lemma}
\newtheorem{corollary}[theorem]{Corollary}
\theoremstyle{definition}
\newtheorem{definition}[theorem]{Definition}

\newtheorem{conjecture}[theorem]{Conjecture}

\newtheorem*{conjecture*}{Conjecture}
\theoremstyle{remark}
\newtheorem{remark}[theorem]{Remark}
\crefname{conjecture}{conjecture}{conjectures}
\Crefname{conjecture}{Conjecture}{Conjectures}

\newcommand{\F}{\mathbb{F}}
\newcommand{\Z}{\mathbb{Z}}
\newcommand{\Aut}{\operatorname{Aut}}
\newcommand{\Gal}{\operatorname{Gal}}
\newcommand{\Diff}{\operatorname{Diff}}
\newcommand{\Ram}{\operatorname{Ram}}
\newcommand{\PGL}{\operatorname{PGL}}

\newcommand{\charac}{\operatorname{char}}
\newcommand{\PSL}{\operatorname{PSL}}
\newcommand{\PGU}{\operatorname{PGU}}

\title[Solvable automorphism groups]{On Solvable Automorphism Groups of Function Fields over Finite Fields}
\author{Henning Stichtenoth}
\email{henning@sabanciuniv.edu}
\address{ Sabancı University, Orhanlı, Tuzla, 34956 İstanbul, Turkey}
\author{Chaoping Xing }
\email{xingcp@sjtu.edu.cn }
\address{School of Computer Sciences, Shanghai Jiao Tong University, Shanghai, China 200240}
\author{Zugan Xing}
\email{xingzugan@aliyun.com}
\address{School of Mathematical Sciences,
Shanghai Jiao Tong University,
Shanghai 200240, China}
\date{}
\keywords{Algebraic function fields, automorphism groups,
solvable groups, finite fields, asymptotic bounds,
class field towers.}

\subjclass[2010]{Primary 14H37;
Secondary 11R58, 14G15, 20D10.}
\begin{document}

\begin{abstract}

The classical Hurwitz bound gives a linear upper bound in terms of the genus for automorphism groups of algebraic function fields in characteristic zero, whereas algebraic function fields in positive characteristic can have much larger automorphism groups.

Usually, one considers the case where the constant field is an algebraically closed field. Here we focus on function fields over a finite constant field. It turns out that the situation is very different. We firstly investigate solvable subgroups and  $p$-subgroups of $\Aut(F/\F_q),$ where $p=\charac(\F_q)$ and $F/\F_q$ is a function field of genus $g_F\ge2$ with full constant field $\F_q$. We prove that there are explicit constants $C_q^{(\mathrm{sol})}$ and $C_q^{(p)}$, depending only on $q$, such that, for every  solvable subgroup $G\le\Aut(F/\F_q)$ and every  $p$-subgroup $P\le\Aut(F/\F_q)$,
\[
   |G|\le C_q^{(\mathrm{sol})}(g_F-1),
   \qquad
   |P|\le C_q^{(p)}(g_F-1).
\]
In particular, one may take $C_q^{(\mathrm{sol})}=12q^2(q-1)$ for $q\ge4$ and $C_q^{(p)}=5q$ for every $q$.

In the second part of this article, we give asymptotic lower bounds for solvable automorphism groups. This means, for every $q$ there is an explicit constant $L_q>0$ and a sequence of function fields $F_i/\F_q$ with $g_{F_i}\to\infty$, together with solvable subgroups $G_i\le\Aut(F_i/\F_q)$, such that
\[
   |G_i|\ge L_q(g_{F_i}-1)
\]
for all $i$. For $q\ge5$ one may take $L_q=2q(q-1)/(q-2)$. If $q=r^2$  with $r\ge3$, an unramified tower construction over the Hermitian function field gives the stronger choice $L_q=2r^3(r-1)/(r-2)$. 

Moreover, as an application of our tower construction above, we improve the asymptotic lower bound of Ma and Xing \cite{MaXing2019} for subgroups of $\Aut(F/\F_q)$ whose order is relatively prime to $p$. As another application, we extend our method to obtain explicit linear bounds for $p$-solvable groups in terms of the genus.
\end{abstract}

\maketitle

\section{Introduction}

A classical problem in the theory of algebraic function fields is to study the order of automorphism groups in terms of the genus.

Let $F/K$ be an algebraic function field of one variable of genus $g_F\ge2$, where $K$ is algebraically closed. Then the automorphism group $\Aut(F/K)$ is finite. If $\operatorname{char}(K)=0$, the Hurwitz bound gives
\[
   |\Aut(F/K)|\le84(g_F-1).
\]

However, this bound fails in positive characteristic because wild ramification may occur. The failure can be very large: for the Hermitian function field $H/K$ over an algebraically closed field of characteristic $p>0$, one has
\[
   |\Aut(H/K)|\ge16g_H^4.
\]
Automorphism groups of algebraic function fields over algebraically closed fields of positive characteristic have been widely studied  \cite[Chapter 11]{HirschfeldKorchmarosTorres}.

Let \(q=p^e\) be a prime power and let \(F/\F_q\) have full constant field \(\F_q\). Write
\[
   \Aut(F/\F_q)
   =\{\sigma:F\longrightarrow F:\sigma\text{ is an }\F_q\text{-automorphism}\}.
\]
If $\overline F=F\overline{\F}_q$ where $\overline{\F}_q$ denotes the algebraic closure of $\F_q,$ extension of constants gives an embedding
\[
   \Aut(F/\F_q)\hookrightarrow\Aut(\overline F/\overline{\F}_q),
\]
but not every automorphism of $\overline F/\overline{\F}_q$ descends to $\F_q$.

For abelian subgroups, Nakajima \cite{Nakajima1987} showed that abelian subgroups of $\Aut(\overline F/\overline{\F}_q)$ can have orders growing linearly with the genus. In contrast, it was shown in \cite{MaXing2019}  that, over a fixed finite field $\F_q$, the maximum order of an abelian subgroup of $\Aut(F/\F_q)$ is $o(g_F)$ as $g_F\to\infty$.

For solvable subgroups, Korchm\'aros and Montanucci \cite{KorchmarosMontanucciOrdinary} proved that every solvable subgroup $G\le\Aut(F/K)$ of an ordinary function field (i.e., its $p$-rank is equal to its genus) $F/K$ of genus $g_F\ge2$ over an algebraically closed field $K$ of odd characteristic satisfies
\[
   |G|\le 34(g_F+1)^{3/2}.
\]
In characteristic $2$, Montanucci and Speziali \cite[Corollary~3.11]{MontanucciSpezialiChar2} established the bound
\begin{equation}\label{eq:MS-bound}
   |G|<35(g_F+1)^{3/2}
\end{equation}
when $g_F$ is even. The exponent $3/2$ is asymptotically optimal in odd characteristic \cite{KorchmarosMontanucciOrdinary}. However, these constructions do not yield solvable automorphism groups of order $\Omega(g_F^{3/2})$ over a fixed finite field $\F_q$.




Our first main result shows that, for every prime power $q$, there exists an explicit constant $C_q^{(\mathrm{sol})}>0$ such that
\begin{equation}\label{eq:intro-uniform-solvable}
   |G|\le C_q^{(\mathrm{sol})}(g_F-1)
\end{equation}
for every function field $F/\F_q$ of genus $g_F\ge2$ and every solvable subgroup $G\le\Aut(F/\F_q)$. In particular, one may take
\[
   C_q^{(\mathrm{sol})}=12q^2(q-1),\qquad q\ge4.
\]
A sharper constant is given in Theorem~\ref{cor:Sq-upper}.

For  $p$-subgroups, where $q=p^e$, we obtain a stronger linear upper bound. More precisely, for every $p$-subgroup $P\le\Aut(F/\F_q)$,
\begin{equation}\label{eq:intro-uniform-p}
   |P|\le C_q^{(p)}(g_F-1),
\end{equation}
where one may take $C_q^{(p)}=5q$ for every $q$. Sharper constants $C_q^{(p)}$ are given in Section~\ref{sec:p-subgroup-upper-bounds}.

In the second part, we establish asymptotic lower bounds for solvable automorphism groups.
More precisely, for every $q$ there exists an explicit constant $L_q>0$ and a sequence of function fields \(
   F_i/\F_q\text{ with }g_{F_i}\rightarrow\infty,
\) together with solvable subgroups \(
   G_i\le \Aut(F_i/\F_q),
\) such that
\begin{equation}\label{eq:intro-general-lower}
   |G_i|\ge L_q(g_{F_i}-1)
\end{equation}
for all $i$. Our construction gives, for $q\ge5$,
\[
   L_q=\frac{2q(q-1)}{q-2}.
\]
The small constant fields $q=2,3,4$ require separate constructions,  which are treated in Section~\ref{sec:asymptotic-lower-bounds}. For square $q$, we obtain a stronger bound. If $q=r^2$ with $r\ge3$, an unramified tower over the Hermitian function field gives
\begin{equation}\label{eq:intro-hermitian-lower}
   L_q=\frac{2r^3(r-1)}{r-2}.
\end{equation}
We also consider explicit constructions of function field towers. For $p>2$, an explicit recursive Garcia--Stichtenoth tower gives a construction with solvable subgroups  $ G_i\le \Aut(F_i/\F_q),$ such that
\[
   |G_i|\ge(q-1)(g_{F_i}-1).
\]
Our constructions based on class field towers yield further results.  Proposition~\ref{prop:restricted-lifting} extends from solvable to $p'$-automorphism groups, see \Cref{cor:pprime-lifting}. As an application, we improve the asymptotic lower bound $B_q\ge 2/3$ of Ma and Xing \cite{MaXing2019} to
\[
   B_q\ge \frac{2(q-1)}{q-2},
   \qquad q\ge 5.
\]
See \Cref{thm:Bq-lower} for details.

Our method for bounding solvable automorphism groups also extends to  $p$-solvable groups, whose chief factors are either $p$-groups or $p'$-groups. For every $p$-solvable subgroup $G\le\Aut(F/\F_q)$ with $q\ge4$, we obtain
\[
   |G|\le21q^2(q-1)(g_F-1).
\]
We discuss this result in more detail in \Cref{thm:p-solvable}.

Since Schmid's work in 1938 \cite{Schmid1938}, it has been known that the classical Hurwitz bound fails over algebraically closed fields of positive characteristic. However, it remains unknown whether a linear bound in terms of the genus holds for automorphism groups over a fixed finite field $\F_q$.

The results above establish linear upper bounds for $p$-subgroups, solvable subgroups, and $p$-solvable subgroups of automorphism groups over fixed finite fields. Motivated by these results, we propose the following conjecture.

\begin{conjecture*}
For every prime power $q$, there exists a constant $C_q>0$ such that, for every function field $F/\F_q$ of genus $g_F\ge2$ with full constant field $\F_q$, one has
\[
   |\Aut(F/\F_q)|\le C_q(g_F-1).
\]
\end{conjecture*}

The paper is organized as follows. Section~2 contains the ramification, Jacobian, restricted ramification tower, and group-theoretic preliminaries. Section~3 proves upper bounds for solvable automorphism groups and $p$-groups. Section~\ref{sec:asymptotic-lower-bounds} proves asymptotic lower bounds using tame towers over the rational function field, unramified towers over Hermitian function fields, and an explicit recursive tower. In Section~\ref{sec:further-applications} we consider prime-to-$p$ and $p$-solvable automorphism groups.
\section{Preliminaries}

Throughout, $F/\F_q$ denotes a function field with full constant field $\F_q$. Then its automorphism group $\Aut(F/\F_q)$ is a finite group.  For $G\le\Aut(F/\F_q)$ we write $F^G$ for the fixed field.  Then $F/F^G$ is Galois and
\[
 \Gal(F/F^G)=G.
\]

We use the following notation throughout.  For a function field $K$ over a finite constant field, let $\mathbb P_K$ denote the set of places of $K$, and let $N_1(K)$ denote the number of places of degree one.  For a finite extension $L/K$, write $\Ram(L/K)$ for the set of places of $K$ ramified in $L$.  If $\ell$ is a prime and $H$ is a pro-$\ell$ group, set
\[
 d(H):=\dim_{\F_\ell}H^1(H,\F_\ell),
\]
so that $d(H)$ is the minimal number of topological generators of $H$.  Finally, $\mu_\ell$ denotes the group of $\ell$-th roots of unity in a fixed separable closure of $K$.

\subsection{Ramification theory}
Let $L/K$ be a finite Galois extension of function fields over $\F_q$. For a place $\mathfrak p$ of $K$, choose a place $\mathfrak P$ of $L$ above $\mathfrak p$.  Since $L/K$ is Galois, the ramification index and different exponent are independent of the choice of $\mathfrak P$; we write
\[
 e_{\mathfrak p}(L/K):=e(\mathfrak P\mid\mathfrak p),
 \qquad
 d_{\mathfrak p}(L/K):=d(\mathfrak P\mid\mathfrak p).
\]
The different divisor is
\[
 \Diff(L/K)=\sum_{\mathfrak p\in\mathbb P_K}
 \sum_{\mathfrak P\mid\mathfrak p}
 d_{\mathfrak p}(L/K)\mathfrak P.
\]
The Hurwitz genus formula may therefore be written in normalized form as
\begin{equation}\label{eq:normalized-Hurwitz}
 \frac{2g_L-2}{[L:K]}
 =2g_K-2+\sum_{\mathfrak p\in\mathbb P_K}
   \frac{d_{\mathfrak p}(L/K)}{e_{\mathfrak p}(L/K)}\deg\mathfrak p,
\end{equation}
see \cite[Theorem~1.3.10]{HX2001}.  If $\mathfrak p$ is tamely ramified, then
\[
 d_{\mathfrak p}(L/K)=e_{\mathfrak p}(L/K)-1
\]
by Dedekind's different theorem \cite[Proposition~1.3.12]{HX2001}. Let $I_j(\mathfrak P)$ be the $j$-th lower ramification group of $\mathfrak P\mid\mathfrak p$.  Hilbert's different formula \cite[Theorem~1.4.10]{HX2001} gives
\begin{equation}\label{eq:Hilbert-different}
 d_{\mathfrak p}(L/K)=\sum_{j\ge0}\bigl(|I_j(\mathfrak P)|-1\bigr).
\end{equation}

\subsection{L-polynomials, Jacobians, and Frobenius}\label{subsec:L-polynomial-prelim}

We first recall some basic facts about the zeta function of a function field. For further details, we refer to \cite[Section~1.6]{HX2001}. 

Let $F/\F_q$ be a function field of genus $g$.  For every integer $m\ge1$, let $N_m(F)$ denote the number of rational places after extending the constant field to $\F_{q^m}$. The zeta function of $F$ is defined by
\[
   Z_F(T)
   =
   \exp\left(
      \sum_{m\ge1}N_m(F)\frac{T^m}{m}
   \right).
\]
We have
\begin{equation}\label{eq:zeta-L-polynomial}
   Z_F(T)
   =
   \frac{L_F(T)}{(1-T)(1-qT)},
\end{equation}
where $L_F(T)\in\Z[T]$ is a polynomial of degree $2g$, called the $L$-polynomial of $F$. Write
\begin{equation}\label{eq:L-roots}
   L_F(T)
   =
   \prod_{i=1}^{2g}(1-\alpha_iT).
\end{equation}
Then \(
   |\alpha_i|=\sqrt q
\) for every $i$ (by the Hasse-Weil theorem), and
\begin{equation}\label{eq:Nm-alpha}
   N_m(F)
   =
   q^m+1-\sum_{i=1}^{2g}\alpha_i^m
   \qquad(m\ge1).
\end{equation}
A  function field \(F/\F_q\)  is called \emph{maximal over \(\F_q\)} if it attains the Hasse-Weil upper bound, that is, \(
   N_1(F)=q+1+2g\sqrt q.
\)

We next introduce some results on the Jacobians of function fields. For more details, we refer to \cite{MilneAbelianVarieties1986,Mumford2008}.

Let $J_F/\F_q$ denote the Jacobian of $F$. Then $J_F$ is an abelian variety of dimension $g$ defined over $\F_q$. Its group of $\F_q$-rational points is canonically isomorphic to the degree-zero divisor class group:
\begin{equation}\label{eq:class-jacobian}
   \operatorname{Cl}^0(F)
   \cong
   J_F(\F_q).
\end{equation}
Let $\pi_F$ denote the $q$-power Frobenius endomorphism of $J_F/\F_q$. For every prime $\ell\ne p$, define
\[
   T_\ell J_F
   =
   \varprojlim_m J_F[\ell^m](\overline{\F}_q),
\]and let  \(
   V_\ell J_F
   =
   T_\ell J_F\otimes_{\Z_\ell}\mathbb{Q}_\ell.
\) Then $T_\ell J_F$ is a free $\Z_\ell$-module of rank $2g$, and $V_\ell J_F$ is a $2g$-dimensional vector space over $\mathbb{Q}_\ell$. The Frobenius endomorphism $\pi_F$ acts naturally on $J_F(\overline{\F}_q)$, and hence on $T_\ell J_F$ and $V_\ell J_F$. The characteristic polynomial of this action is independent of $\ell$ and satisfies
\begin{equation}\label{eq:L-Frobenius}
   \det\bigl(X-\pi_F\mid V_\ell J_F\bigr)
   =
   X^{2g}L_F(X^{-1})
   =
   \prod_{i=1}^{2g}(X-\alpha_i).
\end{equation}
Thus the reciprocal roots $\alpha_i$ of $L_F(T)$ are precisely the eigenvalues of Frobenius on $V_\ell J_F$.

We shall also use two standard facts concerning this action. First, Frobenius acts semisimply on $V_\ell J_F$ \cite[Lemmas~19.2 and~19.3]{MilneAbelianVarieties1986} or \cite[\S21, Proposition on p.~188]{Mumford2008}. Second, the natural map
\[
   \operatorname{End}_{\F_q}(J_F)\otimes_{\Z}\mathbb{Q}_\ell
   \longrightarrow
   \operatorname{End}_{\mathbb{Q}_\ell}(V_\ell J_F)
\]
is injective  \cite[Theorem~12.5]{MilneAbelianVarieties1986}. Consequently, if $\pi_F$ acts on $V_\ell J_F$ as multiplication by an integer $a$, then \(
   \pi_F=[a]
\) as an endomorphism of $J_F/\F_q$. The $\F_q$-rational points of $J_F$ are precisely the points of $J_F(\overline{\F}_q)$ fixed by Frobenius. Hence
\begin{equation}\label{eq:rational-points-Frobenius}
   J_F(\F_q)
   =
   \ker\!\left(
      \pi_F-1:
      J_F(\overline{\F}_q)
      \longrightarrow
      J_F(\overline{\F}_q)
   \right).
\end{equation}
Finally, if $n$ is positive and $p\nmid n$, then by \cite[Theorem~8.2 and Remark~8.4]{MilneAbelianVarieties1986},
\begin{equation}\label{eq:prime-to-p-torsion}
   J_F[n](\overline{\F}_q)
   \cong
   (\Z/n\Z)^{2g}.
\end{equation}

\begin{corollary}\label{cor:extremal-class-group}
Assume that $q=r^2$.  If $F/\F_q$ is maximal over \(\F_q\), then
\begin{equation}\label{eq:maximal-L-class}
   L_F(T)=(1+rT)^{2g}\quad \text{and} \quad
   \operatorname{Cl}^0(F)\cong(\Z/(r+1)\Z)^{2g}.
\end{equation}
\end{corollary}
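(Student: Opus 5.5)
The proof has two parts: first pin down the Frobenius eigenvalues from the point count, then use the linear-algebra facts of \S\ref{subsec:L-polynomial-prelim} to read off the group structure.

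\emph{Step 1: the $L$-polynomial.} By \eqref{eq:Nm-alpha} with $m=1$, maximality gives
\[
\sum_{i=1}^{2g}\alpha_i = q+1-N_1(F) = -2gr .
\]
Each $|\alpha_i|=r$, so $\operatorname{Re}(\alpha_i)\ge -r$. Summing, $\sum_i\operatorname{Re}(\alpha_i)\ge -2gr$, with equality only if $\operatorname{Re}(\alpha_i)=-r$ for every $i$. Together with $|\alpha_i|=r$ this forces $\alpha_i=-r$ for all $i$. By \eqref{eq:L-roots}, $L_F(T)=(1+rT)^{2g}$.

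\emph{Step 2: Frobenius is scalar.} By \eqref{eq:L-Frobenius}, the characteristic polynomial of $\pi_F$ on $V_\ell J_F$ is $(X+r)^{2g}$ for any prime $\ell\ne p$. Since $\pi_F$ acts semisimply on $V_\ell J_F$, it acts as the scalar $-r$. The injectivity of $\operatorname{End}_{\F_q}(J_F)\otimes\mathbb{Q}_\ell\to\operatorname{End}(V_\ell J_F)$ then gives $\pi_F=[-r]$ as an endomorphism of $J_F$.

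\emph{Step 3: the class group.} By \eqref{eq:class-jacobian} and \eqref{eq:rational-points-Frobenius},
\[
\operatorname{Cl}^0(F)\cong \ker(\pi_F-1)=\ker([-r-1])=J_F[r+1](\overline{\F}_q).
\]
Since $r$ is a power of $p$, we have $p\nmid r+1$. Therefore \eqref{eq:prime-to-p-torsion} applies and gives $(\Z/(r+1)\Z)^{2g}$. As a consistency check, the order matches $L_F(1)=(r+1)^{2g}$.

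I expect no real obstacle here. The only delicate point is Step 2: the characteristic polynomial alone does not make Frobenius scalar, so both semisimplicity and the injectivity of the endomorphism map are genuinely needed to pass from the eigenvalue statement to the identity $\pi_F=[-r]$ on $J_F$.
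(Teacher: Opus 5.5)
Your proposal is correct and follows the paper's proof step for step. The point count forces $\alpha_i=-r$; semisimplicity and injectivity of the endomorphism representation give $\pi_F=[-r]$; and $\operatorname{Cl}^0(F)\cong\ker(\pi_F-1)=J_F[r+1](\overline{\F}_q)\cong(\Z/(r+1)\Z)^{2g}$ since $p\nmid r+1$. Your real-part argument only spells out the deduction of $\alpha_i=-r$, which the paper states in one line.
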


\begin{proof}
By the Hasse-Weil equality and \eqref{eq:Nm-alpha},
\[
   \sum_{i=1}^{2g}\alpha_i=-2gr.
\]
Since $|\alpha_i|=r$ for every $i$, thus $\alpha_i=-r$ for all $i$. Hence $L_F(T)=(1+rT)^{2g}$. By \eqref{eq:L-Frobenius}, the characteristic polynomial of $\pi_F$ on $V_\ell J_F$ is $(X+r)^{2g}$. Since Frobenius acts semisimply on $V_\ell J_F$, as it divides $(X+r)^{2g}$, its minimal polynomial must be $X+r$. Thus $\pi_F$ acts as the scalar  $-r$ on $V_\ell J_F$. The endomorphism $[-r]$ induces the same scalar action on $V_\ell J_F$. Hence $\pi_F-[-r]$ acts trivially on $V_\ell J_F$, and the injectivity of the endomorphism representation above gives
\[
   \pi_F=[-r]
\]
as an endomorphism of $J_F/\F_q$. Therefore, by \eqref{eq:rational-points-Frobenius},
\[
   J_F(\F_q)=\ker(\pi_F-1)=J_F[r+1](\overline{\F}_q).
\]
Since $p\mid r$, we have $p\nmid (r+1)$, and \eqref{eq:prime-to-p-torsion} yields
\[
   J_F(\F_q)\cong(\Z/(r+1)\Z)^{2g}.
\]
The desired assertion follows from \eqref{eq:class-jacobian}.
\end{proof}

\section{Tamely ramified \texorpdfstring{pro-$\ell$}{pro-l} towers}
\label{subsec:tame-prelim}

Let $K/\F_q$ be a function field, let $\ell\ne p$ be a prime, and let $S,T$ be disjoint finite sets of places of $K$ with $S\ne\varnothing$. Denote by $K_T^S$ the maximal pro-$\ell$ Galois extension of $K$ which is unramified outside $T$ and in which every place of $S$ splits completely, and write
\[
   G_T^S:=\Gal(K_T^S/K).
\]
We recall the class field-theoretic notation that will be used below. Let
\[
   \mathcal O_K(S)
   :=
   \{f\in K : v_Q(f)\ge0
       \text{ for every }Q\notin S\}
\]
be the ring of $S$-integers and let \(
   E_K(S):=\mathcal O_K(S)^\times
\) be its group of $S$-units.  For a place $Q$ of $K$, let
\[
   \kappa(Q):=\mathcal O_Q/\mathfrak m_Q
\]
denote its residue field, and let $\kappa(Q)^\times$ denote its multiplicative group. For a finite set $T$ disjoint from $S$, let \(
   \mathfrak m_T:=\sum_{Q\in T}Q
\) and denote by \(
   \operatorname{Cl}_K(S,T)
\) the ray class group of $\mathcal O_K(S)$ modulo $\mathfrak m_T$. When $T=\varnothing$, we simply write \(
   \operatorname{Cl}_K(S)
   :=
   \operatorname{Cl}_K(S,\varnothing).
\) For a finitely generated abelian group $A$, write
\[
   d_\ell(A):=
   \dim_{\F_\ell}(A/\ell A),
\]
where in multiplicative notation $A/\ell A$ means $A/A^\ell$. If $A$ is finite, then
\[
   d_\ell(A)
   =
   \dim_{\F_\ell}A[\ell].
\]
By global class field theory, the maximal abelian quotient of $G_T^S$ is the $\ell$-primary part of
$\operatorname{Cl}_K(S,T)$. Hence by the Burnside basis theorem,
\begin{equation}\label{eq:generator-ray-class-rank}
   d(G_T^S)
   =
   d_\ell\operatorname{Cl}_K(S,T).
\end{equation}
We use the following consequence of the Golod--Shafarevich criterion \cite[Theorem~7]{AnglesMaire2002}.

\begin{theorem}\label{thm:tame-GS}
Define
\[
 \theta_{K,T}:=
 \begin{cases}
  1,&\mu_\ell\subset K\text{ and }T=\varnothing,\\
  0,&\text{otherwise}.
 \end{cases}
\]
If
\[
   d(G_T^S)\ge 2+2\sqrt{|S|+\theta_{K,T}},
\]
then $G_T^S$ is infinite. In particular, if  $T\ne\varnothing$ and $|S|=1$, then $d(G_T^S)\ge4$ is sufficient.
\end{theorem}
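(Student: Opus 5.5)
The theorem is presented as a consequence of \cite[Theorem~7]{AnglesMaire2002}, so the plan is to reduce it to the Golod--Shafarevich inequality for pro-$\ell$ groups, taking the relation count from that reference.

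Write $d=d(G_T^S)$ and let $r=r(G_T^S)=\dim_{\F_\ell}H^2(G_T^S,\F_\ell)$ be its relation rank. By the Golod--Shafarevich theorem (in the Vinberg--Gaschütz form), a finite pro-$\ell$ group satisfies $r>d^2/4$. So it suffices to show
\[
 r\le d-1+|S|+\theta_{K,T},
\]
which is the content of \cite[Theorem~7]{AnglesMaire2002}. The idea behind this estimate is standard. One presents $G_T^S$ through the Galois group of the maximal pro-$\ell$ extension unramified outside $S\cup T$. Relations then come from two sources: one relation for each place of $S$, forcing its decomposition group to be trivial, and global relations measured by a Kummer-type group built from the $S$-units $E_K(S)$. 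The latter contribute at most $d_\ell(E_K(S))=|S|-1+\delta$ relations beyond the generators, where $\delta=1$ if $\mu_\ell\subset K$. When $T\ne\varnothing$, the tame inertia at $T$ absorbs the root of unity contribution, so $\theta_{K,T}=0$ there. Pinning down this bookkeeping precisely is the main obstacle, and it is exactly what we take from Angl\`es--Maire rather than reprove.

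Given the bound on $r$, suppose $G_T^S$ is finite. Then
\[
 \frac{d^2}{4}<d-1+|S|+\theta_{K,T},
 \qquad\text{that is,}\qquad
 (d-2)^2<4\bigl(|S|+\theta_{K,T}\bigr),
\]
so $d<2+2\sqrt{|S|+\theta_{K,T}}$. This contradicts the hypothesis, so $G_T^S$ is infinite. (If $G_T^S$ is trivial, then $d=0$ and the hypothesis already fails, so nothing is lost.)

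For the special case, take $T\ne\varnothing$ and $|S|=1$. Then $\theta_{K,T}=0$, and the threshold becomes $2+2\sqrt1=4$. Hence $d(G_T^S)\ge4$ suffices.
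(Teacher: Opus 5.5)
Your proposal is correct and matches the paper, which gives no argument beyond citing \cite[Theorem~7]{AnglesMaire2002} as a consequence of the Golod--Shafarevich criterion; your unpacking into $r>d^2/4$ for a nontrivial finite $\ell$-group, plus the relation-rank bound $r\le d-1+|S|+\theta_{K,T}$ taken from Angl\`es--Maire, with the rearrangement $(d-2)^2<4(|S|+\theta_{K,T})$, is exactly what that citation encapsulates. Your handling of the trivial-group case and the specialization $T\ne\varnothing$, $|S|=1$ (threshold $4$) are also correct.
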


For the rational function field, we shall use the following lower bound for the generator rank.

\begin{lemma}\label{lem:rational-ray-rank}
Let $K=\F_q(x)$, let $S\ne\varnothing$ and $T$ be disjoint finite sets of places of $K$, and suppose that
\[
   \ell\mid (q^{\deg Q}-1)
   \qquad\text{for every }Q\in T.
\]
Set
\[
   \delta_\ell(q):=
   \begin{cases}
      1,&\ell\mid (q-1),\\
      0,&\ell\nmid (q-1).
   \end{cases}
\]
Then
\[
   d(G_T^S)
   =
   d_\ell\operatorname{Cl}_K(S,T)
   \ge
   |T|-|S|+1-\delta_\ell(q).
\]
\end{lemma}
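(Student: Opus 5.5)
The equality $d(G_T^S)=d_\ell\operatorname{Cl}_K(S,T)$ is \eqref{eq:generator-ray-class-rank}. So the work is the lower bound on the $\ell$-rank of the ray class group. I plan to use the standard exact sequence for ray class groups of the Dedekind ring $\mathcal O_K(S)$:
\[
   E_K(S)\longrightarrow \prod_{Q\in T}\kappa(Q)^\times
   \longrightarrow \operatorname{Cl}_K(S,T)\longrightarrow \operatorname{Cl}_K(S)\longrightarrow 0 .
\]
Here the first map is reduction of $S$-units modulo each $Q\in T$. This is valid because the modulus $\mathfrak m_T$ is squarefree, so $(\mathcal O_K(S)/\mathfrak m_T)^\times=\prod_{Q\in T}\kappa(Q)^\times$. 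Dropping the right-hand term, $\operatorname{Cl}_K(S,T)$ contains the quotient $\bigl(\prod_{Q\in T}\kappa(Q)^\times\bigr)/\operatorname{im}E_K(S)$.

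Next I compute $\ell$-ranks. Each $\kappa(Q)^\times$ is cyclic of order $q^{\deg Q}-1$, which is divisible by $\ell$ by hypothesis. Hence $d_\ell\prod_{Q\in T}\kappa(Q)^\times=|T|$.

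For $K=\F_q(x)$, Dirichlet's unit theorem gives $E_K(S)\cong \F_q^\times\times\Z^{|S|-1}$. To see this, note that an $S$-unit is determined up to constants by its divisor. That divisor is supported on $S$ and has degree $0$. Conversely, every degree-zero divisor on the genus-zero field $K$ is principal. The degree-zero divisors supported on $S$ form a lattice of rank $|S|-1$. Therefore $d_\ell(E_K(S))=|S|-1+\delta_\ell(q)$, because $\F_q^\times$ is cyclic of order $q-1$ and contributes to the $\ell$-rank exactly when $\ell\mid q-1$.

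For a homomorphism $f\colon E\to A$ with $A$ finite abelian, I will use the inequality
\[
   d_\ell(A/f(E))\ge d_\ell(A)-d_\ell(E).
\]
It holds because $(A/f(E))/\ell=A/(\ell A+f(E))$, and the image of $f(E)$ in $A/\ell A$ is a quotient of $E/\ell E$, so it has dimension at most $d_\ell(E)$.

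Next, I need $d_\ell\operatorname{Cl}_K(S,T)\ge d_\ell$ of that subgroup. This step is the main obstacle, since $\ell$-rank is not monotone under taking subgroups for infinite groups. Here, however, $\operatorname{Cl}_K(S,T)$ is finite, because $\operatorname{Cl}_K(S)$ is finite and the kernel is finite. For finite abelian groups $d_\ell(A)=\dim A[\ell]$, and this is monotone under subgroups. I will also use $d_\ell(B)=\dim B[\ell]$ for the finite quotient $B$.

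Combining everything gives
\[
   d_\ell\operatorname{Cl}_K(S,T)\ge |T|-(|S|-1+\delta_\ell(q))=|T|-|S|+1-\delta_\ell(q),
\]
as claimed.
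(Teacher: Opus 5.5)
Your proof is correct and follows essentially the same route as the paper: the ray class exact sequence, $d_\ell\bigl(\prod_{Q\in T}\kappa(Q)^\times\bigr)=|T|$, Dirichlet's unit theorem $E_K(S)\cong\F_q^\times\times\Z^{|S|-1}$, and subtraction of $\ell$-ranks. Your explicit justification of $d_\ell(A/f(E))\ge d_\ell(A)-d_\ell(E)$ and of monotonicity of $\ell$-rank (via finiteness of $\operatorname{Cl}_K(S,T)$) fills in steps the paper leaves implicit, and your argument covers $T=\varnothing$ uniformly, whereas the paper treats that case separately.
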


\begin{proof}
If $T=\varnothing$, the asserted lower bound is immediate, so assume that $T\ne\varnothing$. The ray class exact sequence gives
\begin{equation}\label{eq:ray-class-exact-sequence}
\begin{split}
E_K(S)
\xrightarrow{\rho_T}
\prod_{Q\in T}\kappa(Q)^\times  
&\longrightarrow
\operatorname{Cl}_K(S,T)
\longrightarrow
\operatorname{Cl}_K(S)
\longrightarrow1;
\end{split}
\end{equation}
see \cite[Section~1]{AnglesMaire2002}. Set \(
   B_T:=\prod_{Q\in T}\kappa(Q)^\times,
\) By exactness, $B_T/\operatorname{im}(\rho_T)$ embeds into $\operatorname{Cl}_K(S,T)$. Consequently,
\[
d_\ell\operatorname{Cl}_K(S,T)
   \ge d_\ell(B_T)-d_\ell(\operatorname{im}(\rho_T))
   \ge d_\ell(B_T)-d_\ell(E_K(S)).
\]
By Dirichlet's unit theorem, \(
   E_K(S)
   \cong
   \F_q^\times\times\mathbb Z^{|S|-1}.
\) It follows that
\[
   d_\ell(E_K(S))
   =
   |S|-1+\delta_\ell(q).
\]
On the other hand, \(
   \kappa(Q)^\times
   \cong
   \F_{q^{\deg Q}}^\times
\) is cyclic of order $q^{\deg Q}-1$. By hypothesis, $\ell\mid (q^{\deg Q}-1$), and hence $  d_\ell\bigl(\kappa(Q)^\times\bigr)=1$ for $Q \in T$. Therefore
\[
   d_\ell(B_T)=|T|,
\]
and thus \(
   d_\ell\operatorname{Cl}_K(S,T)
   \ge
   |T|-|S|+1-\delta_\ell(q).
\) Finally, \eqref{eq:generator-ray-class-rank} gives \(
   d(G_T^S)
   =
   d_\ell\operatorname{Cl}_K(S,T),
\) which completes the proof.
\end{proof}

\subsection{Chief series of finite solvable groups}
We use standard facts on chief series from \cite[Section~1.8]{KurzweilStellmacher2004}.

\begin{definition}
A \emph{chief series} of a finite group $G$ is a normal series
\[
 G=G_0\supsetneq G_1\supsetneq\cdots\supsetneq G_s=1
\]
such that $G_i\trianglelefteq G$ for every $i$ and there is no $M\trianglelefteq G$ with
\[
 G_{i+1}\subsetneq M\subsetneq G_i.
\]
The quotients $G_i/G_{i+1}$ are the \emph{chief factor} of $G$.
\end{definition}

\begin{proposition}\label{prop:chief-factor}
Let $G$ be finite and solvable.  For every chief series
\[
 G=G_0\supsetneq G_1\supsetneq\cdots\supsetneq G_s=1
\]
and every $i$, there are a prime $\ell_i$ and $a_i\ge1$ such that
\[
 G_i/G_{i+1}\cong(\Z/\ell_i\Z)^{a_i}.
\]
\end{proposition}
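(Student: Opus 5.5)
The idea is to show that each chief factor $N:=G_i/G_{i+1}$ is a minimal normal subgroup of the solvable group $\bar G:=G/G_{i+1}$, and then that any minimal normal subgroup of a finite solvable group is elementary abelian. The first step is the correspondence theorem. Normal subgroups of $\bar G$ contained in $N$ correspond bijectively to normal subgroups $M\trianglelefteq G$ with $G_{i+1}\subseteq M\subseteq G_i$. By the definition of a chief series, the only such $M$ are $G_{i+1}$ and $G_i$. Since $G_i\ne G_{i+1}$, the group $N$ is a nontrivial minimal normal subgroup of $\bar G$. Moreover, $\bar G$ is solvable, because quotients of solvable groups are solvable.

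Next I show that $N$ is abelian. Since $N$ is a subgroup of the solvable group $\bar G$, it is solvable, so its derived subgroup $N'$ is a proper subgroup of $N$ (as $N\ne1$). The subgroup $N'$ is characteristic in $N$, and $N$ is normal in $\bar G$, so $N'\trianglelefteq\bar G$. Minimality then forces $N'=1$, that is, $N$ is abelian.

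Now I choose a prime $\ell$ dividing $|N|$ and consider $\Omega:=\{x\in N: x^\ell=1\}$. Because $N$ is abelian, $\Omega$ is a subgroup, and it is nontrivial by Cauchy's theorem. It is characteristic in $N$, hence normal in $\bar G$, so minimality gives $\Omega=N$. Thus $N$ is an abelian group of exponent $\ell$, i.e. an $\F_\ell$-vector space, and $N\cong(\Z/\ell\Z)^{a}$ with $a\ge1$ since $N\ne1$. Setting $\ell_i:=\ell$ and $a_i:=a$ gives the claim.

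The main point needing care is the repeated step "characteristic in a normal subgroup implies normal". It holds because conjugation by any element of $\bar G$ restricts to an automorphism of $N$. The rest is routine, and the result is also standard (see \cite[Section~1.8]{KurzweilStellmacher2004}).
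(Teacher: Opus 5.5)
Your proof is correct and has no gaps. The paper gives no argument of its own for this statement; it only cites \cite[Theorem~6.1.5]{KurzweilStellmacher2004}, and what you wrote is the standard self-contained proof of that fact.

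The one step the citation leaves implicit, and which you make explicit, is the use of the correspondence theorem. It shows that $G_i/G_{i+1}$ is a minimal normal subgroup of the solvable quotient $G/G_{i+1}$. After that, you apply ``characteristic in a normal subgroup implies normal'' first to $N'$ and then to $\{x\in N : x^{\ell}=1\}$. This is the usual route to showing that a minimal normal subgroup of a solvable group is elementary abelian.
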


\begin{proof}
This is \cite[Theorem~6.1.5]{KurzweilStellmacher2004}.
\end{proof}

If $F/F_0$ is Galois with solvable group $G$ and $F_i:=F^{G_i}$, then
\begin{equation}\label{eq:chief-fixed-fields}
 \Gal(F_{i+1}/F_i)\cong G_i/G_{i+1}
 \cong(\Z/\ell_i\Z)^{a_i}.
\end{equation}

\subsection{Finite subgroups of \texorpdfstring{$\PGL_2(\F_q)$}{PGL2(Fq)}}

We record two consequences of the subgroup structure of $\PGL_2(\F_q)$ that will be used below.  The first concerns elementary abelian subgroups.

\begin{proposition}\label{prop:PGL2-elementary}
Let
\[
   K=\F_q(x),\qquad q=p^e,
\]
where $p=\charac(\F_q)$. Then
\[
   \Aut(K/\F_q)\cong \PGL_2(\F_q).
\]
Let $\ell$ be a prime. If $A$ is an elementary abelian $\ell$-subgroup of $\Aut(K/\F_q)$, then
\[
   |A|\leq m_\ell(q),
\]
where
\[
   m_\ell(q):=
   \begin{cases}
      q, & \ell=p,\\
      4, & \ell=2\neq p,\\
      \ell, & \ell\neq p,\ \ell\text{ odd},\ \ell\mid(q^2-1),\\
      1, & \ell\neq p,\ \ell\nmid(q^2-1).
   \end{cases}
\]
\end{proposition}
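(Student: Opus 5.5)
The first assertion, $\Aut(K/\F_q)\cong\PGL_2(\F_q)$, is standard. An $\F_q$-automorphism $\sigma$ of $K=\F_q(x)$ is determined by $\sigma(x)$. Since $\F_q(\sigma(x))=K$, $\sigma(x)$ must have degree one as a rational function, so $\sigma(x)=(ax+b)/(cx+d)$ with $ad-bc\ne0$. Two matrices give the same automorphism exactly when they differ by a scalar. This yields the isomorphism with $\PGL_2(\F_q)$; one has to watch the convention, since composition reverses the order, and an anti-isomorphism can be composed with inversion. For the bound on elementary abelian $\ell$-subgroups $A$, I will use $|\PGL_2(\F_q)|=q(q-1)(q+1)$ and split according to $\ell$.

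\emph{Case $\ell\nmid q(q^2-1)$.} Here $\ell\nmid|\PGL_2(\F_q)|$, so $A=1$ and $m_\ell(q)=1$.

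\emph{Case $\ell=p$.} The $p$-part of $|\PGL_2(\F_q)|$ is $q$. Hence any $p$-subgroup, elementary abelian or not, has order at most $q$ by Sylow. (Indeed a Sylow $p$-subgroup is $\{x\mapsto x+b\}\cong(\F_q,+)$, so the bound is attained.)

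\emph{Case $\ell$ odd, $\ell\ne p$, $\ell\mid q^2-1$.} Then $\ell$ divides exactly one of $q-1$ and $q+1$, since their gcd divides $2$. The Sylow $\ell$-subgroups of $\PGL_2(\F_q)$ are cyclic: they lie in the split torus $\F_q^\times$ (cyclic of order $q-1$) or the nonsplit torus $\F_{q^2}^\times/\F_q^\times$ (cyclic of order $q+1$). I will justify this by counting: the $\ell$-part of $|\PGL_2|$ equals the $\ell$-part of $q-1$ or of $q+1$, and that torus contains a cyclic subgroup of exactly that order, so it is a Sylow subgroup. An elementary abelian subgroup of a cyclic group has order at most $\ell$, which gives the bound.

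\emph{Case $\ell=2\ne p$.} This is the case I expect to need the most care. The Sylow $2$-subgroup of $\PGL_2(\F_q)$, $q$ odd, is dihedral of order $2^{v_2(q^2-1)}$; it is the normalizer in the torus-containing dihedral group $D_{2(q\mp1)}$. Any elementary abelian subgroup of a dihedral group has order at most $4$: the rotations form a cyclic subgroup containing a single involution, and two reflections commute only if their product is that central rotation involution. To avoid quoting the dihedral structure, I could argue directly instead. Take a noncentral involution $\tau$ in $A$, so that $A\le C(\tau)$, the centralizer of $\tau$ in $\PGL_2(\F_q)$. The element $\tau$ has two fixed points on $\mathbb P^1(\overline{\F}_q)$, forming a Frobenius-stable pair, and $C(\tau)$ is the stabilizer of that pair. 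This stabilizer is dihedral of order $2(q\mp1)$: a torus extended by the swap. Inside it, $A$ meets the cyclic torus in a subgroup of order at most $2$, and so $|A|\le4$.
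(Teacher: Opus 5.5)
Your proof is correct, but it takes a different route from the paper's. The paper uses no Sylow theory. It applies Dickson's classification of subgroups of $\PGL_2(\F_q)$ (Theorem~A.8 in Hirschfeld--Korchm\'aros--Torres) directly to $A$ and reads off which groups in the list can be elementary abelian:
\begin{itemize}
\item elementary abelian $p$-groups, of order $p^f$ with $f\le e$;
\item for odd $\ell\ne p$, only cyclic groups, which occur exactly when $\ell\mid q^2-1$;
\item for $\ell=2\ne p$, only $C_2$ and $D_4\cong C_2\times C_2$.
\end{itemize}
You replace this citation with elementary facts: $|\PGL_2(\F_q)|=q(q^2-1)$, Sylow's theorems, the split and nonsplit tori of orders $q-1$ and $q+1$, and, for $\ell=2$, the dihedral structure of a torus normalizer or of an involution centralizer.

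Your route makes the proof self-contained and slightly stronger: every $p$-subgroup has order at most $q$, and every $\ell$-subgroup for odd $\ell\ne p$ is cyclic. The paper's route is shorter given the citation and shows at once that each bound is attained. It also reuses the classification that Proposition~\ref{prop:PGL2-solvable} needs anyway.

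Two small corrections in the case $\ell=2\ne p$:
\begin{itemize}
\item The Sylow $2$-subgroup is not ``the normalizer'' itself. It is a Sylow $2$-subgroup of the torus normalizer $D_{2(q-\varepsilon)}$, where $q\equiv\varepsilon\pmod 4$. This is where you need $v_2\bigl(2(q-\varepsilon)\bigr)=v_2(q^2-1)$, so the choice of torus must be made explicit.
\item In the centralizer version, the word ``noncentral'' is superfluous, since $\PGL_2(\F_q)$ is centerless; any involution $\tau\in A$ will do. You also do not need $C(\tau)$ to equal the stabilizer of the fixed-point pair. The inclusion $C(\tau)\subseteq\operatorname{Stab}\{P_1,P_2\}$ suffices, together with the fact that the pointwise stabilizer of $P_1,P_2$ has index at most $2$ in the setwise stabilizer. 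That pointwise stabilizer is cyclic, since in coordinates with $P_1=0$ and $P_2=\infty$ it is a subgroup of $\{z\mapsto az: a\in\F_{q^2}^\times\}$.
\end{itemize}
With these adjustments the argument is clean, and the centralizer version avoids identifying the Sylow $2$-subgroup altogether.
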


\begin{proof}
Every $\F_q$-automorphism of $\F_q(x)$ is a fractional linear transformation \cite[Theorem~11.14]{HirschfeldKorchmarosTorres}. Hence
\[
   \Aut(\F_q(x)/\F_q)\cong \PGL_2(\F_q).
\]

We now apply the complete list of subgroups of $\PGL_2(\F_q)$ given in \cite[Appendix~A, Theorem~A.8]{HirschfeldKorchmarosTorres}. 

First suppose that $\ell=p$.  In case~(ii) of \cite[Appendix~A, Theorem~A.8]{HirschfeldKorchmarosTorres}, an elementary abelian $p$-subgroup has order $p^f$ with $f\le e$. Therefore
\[
   |A|=p^f\le p^e=q.
\]
This bound is attained by the translation subgroup
\[
   \{x\mapsto x+a:a\in\F_q\}
   \cong(\F_q,+),
\]
which is elementary abelian of order $q$.

Suppose next that $\ell\neq p$ is odd.  Since $A$ itself is an elementary abelian $\ell$-group, we may apply
\cite[Appendix~A, Theorem~A.8]{HirschfeldKorchmarosTorres} directly to $A$.  Among the groups in that list, the only one which is an elementary abelian group of odd order prime to $p$ is the cyclic group in case~(i).  Hence $A$ is cyclic.  Since $A$ is elementary abelian, either $A=1$ or
\[
   A\cong C_\ell.
\]
By case~(i) of Theorem~A.8, a cyclic subgroup of order $\ell$ can occur only when
\[
   \ell\mid(q-1)
   \qquad\text{or}\qquad
   \ell\mid(q+1),
\]
that is, precisely when $\ell\mid(q^2-1)$.  Conversely, when $\ell\mid(q^2-1)$, case~(i) gives a cyclic subgroup of order $\ell$.  Thus
\[
   |A|\le
   \begin{cases}
      \ell,&\ell\mid(q^2-1),\\
      1,&\ell\nmid(q^2-1).
   \end{cases}
\]

Finally suppose that $\ell=2\neq p$, so that $q$ is odd. Again apply Theorem~A.8 directly to the elementary abelian $2$-group $A$.  The only groups in the list which can themselves be elementary abelian $2$-groups are the cyclic group $C_2$ in case~(i) and the dihedral group $D_4$ in case~(iii).  In the notation of Theorem~A.8, $D_{2n}$ has order $2n$, and hence
\[
   D_4\cong C_2\times C_2.
\]
Consequently $|A|\le4$.  Since $q$ is odd, $2\mid(q-1)$, and case~(iii) of Theorem~A.8 gives such a subgroup $D_4$. Therefore the bound $4$ is attained.
\end{proof}

\begin{proposition}
\label{prop:PGL2-solvable}
Let $K=\F_q(x)$ and let $G\leq\Aut(K/\F_q)$ be a  solvable subgroup. Then
\[
   |G|\leq R_q,
\]
where
\begin{equation}\label{eq:Rq-definition}
 R_q=
 \begin{cases}
  6,&q=2,\\
  24,&q=3,5,\\
  q(q-1),&\text{otherwise}.
 \end{cases}
\end{equation}
\end{proposition}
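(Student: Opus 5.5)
Since $\Aut(K/\F_q)\cong\PGL_2(\F_q)$ (as recalled in the proof of Proposition~\ref{prop:PGL2-elementary}), I would go through the same subgroup classification \cite[Appendix~A, Theorem~A.8]{HirschfeldKorchmarosTorres}. For each type of subgroup I record whether it is solvable and bound its order. Every subgroup of $\PGL_2(\F_q)$ is, up to conjugacy, one of the following: a cyclic group of order dividing $q\pm1$; an elementary abelian $p$-group of order $p^f$; a semidirect product $E_{p^f}\rtimes C_m$ contained in a Borel subgroup, with $m\mid (q-1)$ and hence order at most $q(q-1)$; a dihedral group $D_{2n}$ with $n\mid q\pm1$ (or $n=p$); $A_4$, $S_4$, $A_5$; or $\PSL_2(\F_{p^f})$ and $\PGL_2(\F_{p^f})$ for $f\mid e$.

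\textbf{Discarding the non-solvable types.} $A_5$ is not solvable. The groups $\PSL_2(\F_{p^f})$ and $\PGL_2(\F_{p^f})$ are non-solvable except when $p^f\in\{2,3\}$. In those cases $\PGL_2(\F_2)\cong S_3$ has order $6$, and $\PGL_2(\F_3)\cong S_4$ has order $24$.

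\textbf{Bounding the solvable types.} The remaining groups have order at most
\[
\max\{\,q+1,\ q,\ q(q-1),\ 2(q+1),\ 12\ (\text{if } A_4 \text{ occurs}),\ 24\ (\text{if } S_4 \text{ occurs})\,\}.
\]
For $q\ge4$ we have $2(q+1)\le q(q-1)$, since $q^2-3q-2\ge 2>0$. For the small fields I would check which of $A_4$ and $S_4$ actually occur, using the classification conditions: $S_4$ occurs in $\PGL_2(\F_q)$ for every odd $q$, and $A_4$ occurs when $q$ is odd or $q=2^{2k}$.
\begin{itemize}
\item $q=2$: $\PGL_2(\F_2)\cong S_3$, so the bound is $6$.
\item $q=3$: $\PGL_2(\F_3)\cong S_4$, so the bound is $24$.
\item $q=4$: $\PGL_2(\F_4)\cong A_5$. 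Its largest solvable subgroup is $A_4$, of order $12=q(q-1)$.
\item $q=5$: $S_4\le\PGL_2(\F_5)\cong S_5$, so the bound is $24>20=q(q-1)$.
\item $q\ge7$ odd: $q(q-1)\ge 42>24$, so $S_4$ is dominated.
\item $q\ge8$ even: $q(q-1)\ge 56>24$, so $A_4$ and $S_4$ are dominated.
\end{itemize}
The bound $q(q-1)$ is attained by the Borel group $\{x\mapsto ax+b\}$, which confirms the definition of $R_q$ in \eqref{eq:Rq-definition}.

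\textbf{Main obstacle.} The delicate step is the bookkeeping of which exceptional subgroups ($S_4$, and the subfield groups $\PGL_2(\F_{p^f})$ with $p^f\le 3$) occur for small $q$, together with the orders of the dihedral and semidirect-product cases as stated in Theorem~A.8. There I would rely on the precise statement of that theorem rather than general principles.
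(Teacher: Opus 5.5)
Your proposal is correct and follows essentially the same route as the paper's proof. Both arguments go through the list in Theorem~A.8 of \cite{HirschfeldKorchmarosTorres}, discard $A_5$ and the non-solvable subfield groups $\PSL_2(\F_{p^f}),\PGL_2(\F_{p^f})$ with $p^f\ge4$, bound the remaining orders by $\max\{q(q-1),2(q+1),24\}$, and settle $q=2,3,4,5$ by hand. The only cosmetic difference is at $q=4$: you invoke $\PGL_2(\F_4)\cong A_5$ and its maximal subgroups, whereas the paper simply notes that the $S_4$ case of Theorem~A.8 cannot occur in characteristic~$2$.
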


\begin{proof}
By \cite[Appendix~A, Theorem~A.8]{HirschfeldKorchmarosTorres}, a solvable subgroup \(G\le \PGL_2(\F_q)\), with \(q=p^e\), is of one of the following types: cyclic or dihedral, an elementary abelian \(p\)-group, a group \((C_{p})^h\rtimes C_n\) with \(h\le e\) and \(n\mid(q-1)\), one of \(A_4,S_4\), or a solvable subfield subgroup.  The case \(A_5\) is non-solvable. The corresponding orders satisfy
\[
 |G|\le q+1,\qquad 2(q+1),\qquad  q,\qquad
 q(q-1),\qquad 24,
\]
respectively.  Moreover, by \cite[Appendix~A, Example~A.7(5)]{HirschfeldKorchmarosTorres}, \(\PSL_2(\F_{p^f})\) is simple for \(p^f\ne2,3\), hence a solvable subfield subgroup occurring in cases~(viii) or~(ix) of Theorem~A.8 has order at most \(24\).  Therefore
\[
 |G|\le \max\{q(q-1),\,2(q+1),\,24\}.
\]
If \(q\ge7\), the maximum is \(q(q-1)\), and this value is attained by the affine subgroup
\[
 \{x\mapsto ax+b:\ a\in\F_q^\times,\ b\in\F_q\},
\]
of order \(q(q-1)\). It remains to consider \(q=2,3,4,5\).  We have
\[
 \PGL_2(\F_2)\cong S_3,\qquad
 \PGL_2(\F_3)\cong S_4,
\]
giving maxima \(6\) and \(24\), respectively.  For \(q=4\), Theorem~A.8(v) does not occur since \(p=2\), while all the remaining solvable cases have order at most \(12;\)  the affine subgroup has order \(12\).  For \(q=5\), Theorem~A.8(v) gives an \(S_4\)-subgroup, so the maximum is \(24\).
\end{proof}

\section{Upper bounds for solvable automorphism groups}
\subsection{Upper bounds for solvable subgroups}
Let $F/\F_q$ have genus $g_F\ge2$, let $G\le\Aut(F/\F_q)$ be solvable, and write
\[
 F_0:=F^G.
\]
Then $F/F_0$ is Galois with group $G$, and
\begin{equation}\label{eq:solvable-Hurwitz}
 \frac{2g_F-2}{|G|}
 =2g_{F_0}-2+\sum_{\mathfrak p\in\mathbb P_{F_0}}
 \frac{d_{\mathfrak p}(F/F_0)}{e_{\mathfrak p}(F/F_0)}\deg\mathfrak p.
\end{equation}

\subsubsection{The case $g_{F_0}\ge1$}

\begin{proposition}\label{prop:g0-ge2}
If $g_{F_0}\ge2$, then
\[
 |G|\le g_F-1.
\]
\end{proposition}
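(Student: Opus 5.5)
The plan is to read the bound directly off the normalized Hurwitz formula \eqref{eq:solvable-Hurwitz}; solvability of $G$ plays no role in this case. Every term of the ramification sum is nonnegative, since $d_{\mathfrak p}(F/F_0)\ge0$, $e_{\mathfrak p}(F/F_0)\ge1$ and $\deg\mathfrak p\ge1$. Dropping the sum therefore gives
\[
 \frac{2g_F-2}{|G|}\ \ge\ 2g_{F_0}-2.
\]

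Under the hypothesis $g_{F_0}\ge2$ the right-hand side is at least $2$. Hence $2g_F-2\ge 2|G|$, which is $|G|\le g_F-1$.

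The only points to check are that \eqref{eq:solvable-Hurwitz} applies as written and that $F_0$ genuinely has genus $g_{F_0}$ over $\F_q$.

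1. $F/F_0$ is a finite Galois extension with group $G$, since $G\le\Aut(F/\F_q)$ is finite and $F_0=F^G$.
2. $F_0$ has full constant field $\F_q$. Indeed, $\F_q\subseteq F_0\subseteq F$, and $F$ has full constant field $\F_q$, so any constant of $F_0$ algebraic over $\F_q$ already lies in $\F_q$.
3. The extension is separable, because it is Galois. This is what the Hurwitz formula \cite[Theorem~1.3.10]{HX2001} requires.

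There is no real obstacle. The one thing to get right is the sign of the ramification contribution: it is a sum of nonnegative different exponents weighted by degrees. This is immediate from Hilbert's different formula \eqref{eq:Hilbert-different}, which shows $d_{\mathfrak p}\ge0$, and in fact $d_{\mathfrak p}\ge e_{\mathfrak p}-1$.
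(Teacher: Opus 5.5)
Your proposal is correct and follows the paper's own proof: drop the nonnegative ramification sum in \eqref{eq:solvable-Hurwitz}, so the right-hand side is at least $2g_{F_0}-2\ge2$, and this gives $|G|\le g_F-1$. Your side checks are accurate and harmless. $F_0$ has full constant field $\F_q$, and $d_{\mathfrak p}\ge0$ (indeed $d_{\mathfrak p}\ge e_{\mathfrak p}-1$).
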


\begin{proof}
The right-hand side of \eqref{eq:solvable-Hurwitz} is at least $2$.
\end{proof}

\begin{proposition}\label{prop:g0-eq1}
Assume $g_{F_0}=1$, and let $r$ be the smallest prime divisor of $|G|$.  Then
\[
 |G|\le\frac{2r}{r-1}(g_F-1)\le4(g_F-1).
\]
\end{proposition}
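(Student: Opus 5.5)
With $g_{F_0}=1$, the Hurwitz formula \eqref{eq:solvable-Hurwitz} becomes
\[
 \frac{2g_F-2}{|G|}=\sum_{\mathfrak p\in\mathbb P_{F_0}}\frac{d_{\mathfrak p}(F/F_0)}{e_{\mathfrak p}(F/F_0)}\deg\mathfrak p .
\]
So the whole task is to bound the ramification sum from below by $(r-1)/r$. Since $g_F\ge2$, the left-hand side is positive. Hence $F/F_0$ is ramified at some place $\mathfrak p$ of $F_0$, and it suffices to show that a single ramified place already contributes at least $(r-1)/r$.

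Fix a ramified place $\mathfrak p$ with ramification index $e=e_{\mathfrak p}(F/F_0)>1$. Its contribution is at least $d_{\mathfrak p}/e$, because $\deg\mathfrak p\ge1$.

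By Hilbert's different formula \eqref{eq:Hilbert-different}, $d_{\mathfrak p}\ge |I_0(\mathfrak P)|-1=e-1$. This holds in both the tame and the wild case; in the wild case it is even strict. Therefore $d_{\mathfrak p}/e\ge 1-1/e$.

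Now $e=|I_0(\mathfrak P)|$ divides $|G|$ and $e>1$, so $e$ has a prime divisor, which is at least $r$. In particular $e\ge r$, and hence $1-1/e\ge 1-1/r=(r-1)/r$.

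Combining these gives $(2g_F-2)/|G|\ge (r-1)/r$, that is,
\[
 |G|\le \frac{2r}{r-1}(g_F-1).
\]
Finally, the function $r\mapsto 2r/(r-1)$ is decreasing, and $r\ge2$, so $2r/(r-1)\le4$. This yields the second inequality.

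I see no real obstacle in this argument. The only point needing care is justifying that some place ramifies. This follows from $g_F\ge2$: an unramified extension of a genus-one field has $2g_F-2=0$ by Hurwitz, which contradicts $g_F\ge 2$.

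Note that solvability of $G$ is not actually needed for this proposition.
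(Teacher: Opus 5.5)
Your proof is correct and is essentially the paper's argument. In both, $g_F\ge2$ forces ramification, and a single ramified place then contributes at least $(e_{\mathfrak p}-1)/e_{\mathfrak p}\ge (r-1)/r$ to the normalized Hurwitz sum, using $d_{\mathfrak p}\ge e_{\mathfrak p}-1$ and $e_{\mathfrak p}\mid |G|$; your remark that solvability of $G$ is not used is also accurate.
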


\begin{proof}
The extension $F/F_0$ is ramified, otherwise Hurwitz would give $g_F=1$. Choose a ramified place $\mathfrak p$ of $F_0$.  Since $e_{\mathfrak p}(F/F_0)\ge r$ and $d_{\mathfrak p}(F/F_0)\ge e_{\mathfrak p}(F/F_0)-1$, we have
\[
 \frac{d_{\mathfrak p}(F/F_0)}{e_{\mathfrak p}(F/F_0)}
 \deg\mathfrak p\ge\frac{r-1}{r}.
\]
Hence \eqref{eq:solvable-Hurwitz} gives
\[
 \frac{2g_F-2}{|G|}\ge\frac{r-1}{r},
\]
which proves the claim.
\end{proof}

\subsubsection{The case $g_{F_0}=0$}
Let $R_q$ denote the constant from Proposition~\ref{prop:PGL2-solvable}.  Throughout this subsection, $r$ denotes the smallest prime divisor of $|G|$.  Fix a chief series
\[
   G=G_0\triangleright G_1\triangleright\cdots\triangleright G_n=1,
\]
and let \(
   F_i:=F^{G_i}
\) be the corresponding fixed fields.  Recall that, since $G$ is solvable, every chief factor $G_i/G_{i+1}$ is an elementary abelian group.  Let $i$ be the first index for which the genus becomes positive, that is,
\[
   g_{F_i}=0
   \qquad\text{and}\qquad
   g_{F_{i+1}}>0.
\]
For this index, there is a prime $\ell$ and an integer $a\ge1$ such that
\[
   G_i/G_{i+1}
   \cong
   \Gal(F_{i+1}/F_i)
   \cong
   (\Z/\ell\Z)^a.
\]
Thus $\ell$ is the prime attached to the first chief factor at which positive genus appears. Since $\ell\mid |G_i/G_{i+1}|$ and hence $\ell\mid |G|$, we have \(
   r\le \ell.
\)

\begin{lemma}\label{lem:prime-layer-large}
Let $L/K$ be cyclic of prime degree $\ell\ge5$.  If $g_K=0$ and $g_L>0$, then
\[
 2g_L-2\ge\ell-3.
\]
In particular, $g_L\ge2$.
\end{lemma}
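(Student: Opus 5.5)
Apply the normalized Hurwitz formula \eqref{eq:normalized-Hurwitz} to the cyclic extension $L/K$ of degree $\ell$ with $g_K=0$:
\[
 \frac{2g_L-2}{\ell}=-2+\sum_{\mathfrak p\in\Ram(L/K)}\frac{d_{\mathfrak p}(L/K)}{\ell}\deg\mathfrak p,
\]
since every ramified place is totally ramified, so $e_{\mathfrak p}=\ell$. Equivalently,
\[
 2g_L-2=-2\ell+\sum_{\mathfrak p\in\Ram(L/K)} d_{\mathfrak p}(L/K)\deg\mathfrak p .
\]
Two facts drive the argument. First, $L/K$ is ramified: otherwise $2g_L-2=-2\ell<0$, contradicting $g_L>0$. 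Second, since $g_L>0$, the right-hand side is at least $0$. It therefore suffices to show that the total different degree $D:=\sum d_{\mathfrak p}\deg\mathfrak p$ satisfies $D\ge 3(\ell-1)$ whenever $D>2\ell-2$. This gives $2g_L-2\ge 3\ell-3-2\ell=\ell-3$, and for $\ell\ge5$ we get $2g_L-2\ge2$, hence $g_L\ge2$.

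I would split into cases according to whether $\ell=p$.

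\textbf{Tame case} ($\ell\ne p$). Every ramified place has $d_{\mathfrak p}=\ell-1$ by Dedekind's different theorem, so $D=(\ell-1)m$ with $m=\sum_{\mathfrak p\in\Ram}\deg\mathfrak p$. Since $D>2\ell-2$ forces $m\ge3$, we obtain $D\ge3(\ell-1)$, which is what we need.

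\textbf{Wild case} ($\ell=p$). Here $L/K$ is an Artin--Schreier extension. By Hilbert's different formula \eqref{eq:Hilbert-different} with $|I_j|\in\{1,\ell\}$, we have $d_{\mathfrak p}=(\ell-1)(m_{\mathfrak p}+1)$, where $m_{\mathfrak p}\ge1$ is the (lower) ramification jump. Hence $D=(\ell-1)\sum(m_{\mathfrak p}+1)\deg\mathfrak p$, and each term of the sum is at least $2$. If $D>2\ell-2$, then $\sum(m_{\mathfrak p}+1)\deg\mathfrak p\ge3$, giving again $D\ge3(\ell-1)$.

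So in both cases $D$ is a multiple of $\ell-1$. The condition $g_L>0$ forces this multiple to be at least $3$, since a multiple of $2$ would give exactly $g_L=0$. This divisibility observation is the key point and the only step needing care, namely the computation of the different exponent in the wild case via Hilbert's formula. I do not expect a genuine obstacle, though I would double-check that $m_{\mathfrak p}\ge1$ (it is, as $I_1=I_0$ for a $p$-group). In fact, in the wild case one can say more, since $p\nmid m_{\mathfrak p}$, but this refinement is not needed.
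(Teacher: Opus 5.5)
Your proof is correct and follows the paper's argument: in both the tame and the wild case you write $\deg\Diff(L/K)$ as $(\ell-1)$ times an integer, and $g_L>0$ forces that integer to be at least $3$. The only difference is cosmetic. In the wild case you get $d_{\mathfrak p}=(\ell-1)(m_{\mathfrak p}+1)$ from Hilbert's different formula via the lower ramification jump, whereas the paper reads it off a reduced Artin--Schreier equation $y^\ell-y=f$ with $m_{\mathfrak p}$ the pole order of $f$; the two numbers coincide.
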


\begin{proof}
The Hurwitz genus formula gives
\[
 2g_L-2=-2\ell+\deg\Diff(L/K).
\]
If $\ell\ne\charac(\F_q)$, the extension is tame and every ramified place is totally ramified.  Thus
\[
 \deg\Diff(L/K)=(\ell-1)
 \sum_{\mathfrak p\in\Ram(L/K)}\deg\mathfrak p.
\]
Since $g_L>0$, the sum of the degrees of the ramified places is at least $3$, hence $2g_L-2\ge-2\ell+3(\ell-1)=\ell-3$. Now let $\ell=\charac(\F_q)$.  Write the Artin--Schreier extension in reduced form as $y^\ell-y=f$.  If $m_{\mathfrak p}>0$ is the pole order of $f$ at $\mathfrak p$, then $\ell\nmid m_{\mathfrak p}$ and
\[
 d_{\mathfrak p}(L/K)=(\ell-1)(m_{\mathfrak p}+1),
\]
see \cite[Proposition 3.7.8]{Stichtenoth2009}. Therefore
\[
 \deg\Diff(L/K)
 =(\ell-1)\sum_{\mathfrak p}(m_{\mathfrak p}+1)\deg\mathfrak p.
\]
The last sum is an integer strictly larger than $2$, so it is at least $3$, and the same inequality follows.
\end{proof}

\begin{theorem}\label{thm:precise-rational}
Assume $g_{F_0}=0$.  Choose a chief series
\[
 G=G_0\supsetneq G_1\supsetneq\cdots\supsetneq G_s=1,
 \qquad F_i:=F^{G_i},
\]
and let $i$ be maximal such that $g_{F_i}=0$.  Write
\[
 \Gal(F_{i+1}/F_i)\cong(\Z/\ell\Z)^a.
\]
Define
\[
k_{\ell,r}:=
\begin{cases}
\dfrac{2r\ell}{r-1}, & \ell\in\{2,3\},\\[6pt]
\dfrac{2\ell}{\ell-3}, & \ell\ge 5.
\end{cases}
\]
Then
\[
|G|\le R_q\,k_{\ell,r}\,m_\ell(q)(g_F-1),
\]
where $R_q$ and $m_\ell(q)$ are defined in \eqref{eq:Rq-definition} and  \Cref{prop:PGL2-elementary}, respectively.
\end{theorem}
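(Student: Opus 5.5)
We split the index $|G|=[F:F_0]$ as
\[
 |G|=[F_i:F_0]\cdot[F_{i+1}:F_i]\cdot[F:F_{i+1}],
\]
and bound each factor separately.

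\textbf{The factor $[F_i:F_0]$.} Since $g_{F_i}=0$ and $F_i$ has full constant field $\F_q$ (constant field of a subfield of $F$), $F_i$ is rational over $\F_q$: a genus-zero function field over a finite field always has a rational place. The field $F_{i+1}$ is moved by $G$ only through $G/G_{i+1}$. Since $G_i\trianglelefteq G$, the quotient $G/G_i$ acts faithfully on $F_i=F^{G_i}$. Thus $G/G_i$ embeds in $\Aut(F_i/\F_q)\cong\PGL_2(\F_q)$ as a solvable subgroup, and Proposition~\ref{prop:PGL2-solvable} gives $[F_i:F_0]=|G/G_i|\le R_q$.

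\textbf{The factor $[F_{i+1}:F_i]$.} Write $A:=\Gal(F_{i+1}/F_i)\cong(\Z/\ell\Z)^a$. We want to bound $|A|\le m_\ell(q)$ up to the constant $k_{\ell,r}$, and there are two natural ways to proceed.

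\emph{Route 1.} Bound $|A|/\ell$ by $m_\ell(q)$ directly from ramification. The inertia groups of $A$ at places over $F_i$ are subgroups of $A$. Alternatively, pass to a cyclic subextension: choose $B\le A$ of index $\ell$ such that $F_{i+1}^B$ has positive genus. The genus of an elementary abelian extension is controlled by its degree-$\ell$ subextensions via Kani--Rosen type formulas, so some such $B$ exists whenever $g_{F_{i+1}}>0$.

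\emph{Route 2 (intended).} Estimate $[F:F_{i+1}]\cdot[F_{i+1}:F_i']$, where $F_i'=F_{i+1}^{B}$ is a cyclic degree-$\ell$ layer with positive genus. The Hurwitz formula applied to $F/F_i'$ gives $2g_F-2\ge [F:F_i'](2g_{F_i'}-2)$, which is useful once $g_{F_i'}\ge2$.
\begin{itemize}
\item For $\ell\ge5$, Lemma~\ref{lem:prime-layer-large} gives $2g_{F_i'}-2\ge\ell-3$. Hence $[F:F_i']\le 2(g_F-1)/(\ell-3)$, and multiplying by $\ell$ yields the factor $2\ell/(\ell-3)$.
\item For $\ell\in\{2,3\}$, $g_{F_i'}$ may equal $1$, and we argue as in Proposition~\ref{prop:g0-eq1}. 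Either $g_{F_i'}\ge2$, or $g_{F_i'}=1$ and $F/F_i'$ is ramified with ramification index $\ge r$. In the second case the normalized Hurwitz formula \eqref{eq:normalized-Hurwitz} gives $[F:F_i']\le 2r(g_F-1)/(r-1)$, and multiplying by $\ell$ gives $k_{\ell,r}=2r\ell/(r-1)$. One caveat: $F/F_i'$ need not be Galois. We therefore apply the normalized formula to $F/F_{i+1}$, which is Galois, together with $g_{F_{i+1}}\ge g_{F_i'}$, possibly bounding $[F_{i+1}:F_i']$ separately.
\end{itemize}

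\textbf{The main obstacle.} What remains is to show that the residual part of $A$, namely $[F_{i+1}:F_i']=|B|=\ell^{a-1}$, is at most $m_\ell(q)/\ell$, or to absorb it some other way. I would try to show that $B$, or $A$ modulo a cyclic piece, acts on a rational field. Concretely, pick $B$ so that $F_{i+1}^{B'}$ has genus zero for a complementary subgroup $B'$ of order $\ell^{a-1}$. Then $B'$ would embed as an elementary abelian subgroup of $\PGL_2(\F_q)$, and Proposition~\ref{prop:PGL2-elementary} would bound $|B'|\le m_\ell(q)$, giving the claim with room to spare.

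Finding such a complementary rational subfield is the step I expect to be hardest. It needs either the Kani--Rosen decomposition $g_{F_{i+1}}=\sum_C g_{F_{i+1}^C}$ over index-$\ell$ subgroups $C$, or, failing that, a maximality argument inside $A$. For the latter, take a maximal subgroup $A'\le A$ with $g(F_{i+1}^{A'})>0$, and note that the quotient field over which genus is still zero carries $A/A'$ acting as automorphisms of a rational function field.
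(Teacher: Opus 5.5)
\textbf{There is a genuine gap: as written, your argument does not close, and the step you call the main obstacle is misdiagnosed.}

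Your bound $[F_i:F_0]\le R_q$ is fine. But in Route~2 the factor $|B|=[F_{i+1}:F_i']$ is not a residual piece that must be bounded separately. It is already contained in $[F:F_i']=[F:F_{i+1}]\,[F_{i+1}:F_i']$, which the Hurwitz estimate controls directly.

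Your target $|B|\le m_\ell(q)/\ell$ is in fact false. For $a=1$, $\ell\ne p$ odd and $\ell\nmid q^2-1$, it reads $1\le 1/\ell$. The complementary-subgroup idea also fails: $B'$ fixes $F_{i+1}^{B'}$ pointwise, so only the quotient $A/B'$ acts on that field.

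Your caveat that $F/F_i'$ need not be Galois is unfounded. We have $F_i'=F^{\widetilde B}$ with $\widetilde B\le G_i$ the preimage of $B$, so $F/F_i'$ is Galois with group $\widetilde B$. The argument of Proposition~\ref{prop:g0-eq1} therefore applies to it, since the smallest prime divisor of $|\widetilde B|$ is $\ge r$. Your detour through $F/F_{i+1}$ is exactly what reintroduced the uncontrolled factor $|B|$.

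The one real gap in Route~2 is the existence of an index-$\ell$ subgroup $B\le A$ with $g(F_{i+1}^B)>0$, which you only assert. It is true. For a prime $\ell'\ne p$, decompose $V:=V_{\ell'}J_{F_{i+1}}\otimes\overline{\mathbb Q}_{\ell'}$ into characters of $A$. Two facts are needed: $\dim V^H=2g(F_{i+1}^H)$ for every $H\le A$, and every nontrivial character has kernel of index $\ell$. Together they give $g_{F_{i+1}}-g_{F_i}=\sum_C\bigl(g(F_{i+1}^C)-g_{F_i}\bigr)$, summed over the index-$\ell$ subgroups $C$. This also covers $\ell=p$, where it is the Garcia--Stichtenoth formula. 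So $g_{F_i}=0<g_{F_{i+1}}$ forces some $g(F_{i+1}^C)>0$.

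With this lemma, Route~2 yields $|G|\le R_q\,k_{\ell,r}(g_F-1)$. This is stronger than the statement, since $m_\ell(q)\ge1$.

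The paper avoids the lemma as follows:
\begin{itemize}
\item It takes an arbitrary chain $A=A_0\supsetneq\cdots\supsetneq A_a=1$ with steps of index $\ell$, puts $K_j=F_{i+1}^{A_j}$, and lets $j$ be maximal with $g_{K_j}=0$.
\item Then $A/A_j$ acts faithfully on the rational field $K_j$, so $[K_j:F_i]\le m_\ell(q)$ by Proposition~\ref{prop:PGL2-elementary}.
\item Hurwitz is applied to $F/K_{j+1}$, which is Galois with group the preimage of $A_{j+1}$.
\end{itemize}
That is where the factor $m_\ell(q)$ comes from. It is also what your last paragraph was reaching for: the rational field to use is the last genus-zero field in a chain, acted on by a quotient of $A$, not by a subgroup.
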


\begin{proof}
Set
\[
 A:=\Gal(F_{i+1}/F_i)\cong(\Z/\ell\Z)^a
\]
and choose a chain of subgroups
\[
 A=A_0\supsetneq A_1\supsetneq\cdots\supsetneq A_a=1,
 \qquad [A_j:A_{j+1}]=\ell.
\]
For $K_j:=F_{i+1}^{A_j}$ we have
\[
 F_i=K_0\subsetneq K_1\subsetneq\cdots\subsetneq K_a=F_{i+1},
 \qquad [K_{j+1}:K_j]=\ell.
\]
Choose $j$ maximal with $g_{K_j}=0$ and set
\[
 K:=K_j,\qquad L:=K_{j+1}.
\]
Then
\[
 g_K=0<g_L,\qquad [L:K]=\ell.
\]

Since $A$ is abelian, $K/F_i$ is Galois with
\[
 \Gal(K/F_i)\cong A/A_j,
\]
so Proposition~\ref{prop:PGL2-elementary} gives
\[
 [K:F_i]\le m_\ell(q).
\]
Also
\[
 \Gal(F_i/F_0)\cong G/G_i.
\]
Both $F_i$ and $F_0$ are rational over $\F_q$, and hence
\[
 \Gal(F_i/F_0)\le\Aut(F_i/\F_q)\cong\PGL_2(\F_q).
\]
By Proposition~\ref{prop:PGL2-solvable},
\[
 [F_i:F_0]\le R_q,
 \qquad [K:F_0]\le m_\ell(q)\,R_q.
\]

If $\ell\ge5$, Lemma~\ref{lem:prime-layer-large} gives $2g_L-2\ge\ell-3$.  Applying the Hurwitz genus formula to $F/L$ gives
\[
 [F:L]\le\frac{2(g_F-1)}{\ell-3}.
\]
Therefore
\[
 |G|=[F:L][L:K][K:F_i][F_i:F_0]
 \le\frac{2\ell}{\ell-3}m_\ell(q)\,R_q(g_F-1).
\]

Let $\ell\in\{2,3\}$.  If $g_L\ge2$, then $[F:L]\le g_F-1$.  If $g_L=1$, let $\widetilde A_{j+1}\le G_i$ be the inverse image of $A_{j+1}\le G_i/G_{i+1}=A$.  Then $L=F^{\widetilde A_{j+1}}$ and $F/L$ is Galois with group $\widetilde A_{j+1}$.  If $r_L$ is the smallest prime divisor of $[F:L]$, then $r_L\ge r$.  Since $g_L=1<g_F$, the extension $F/L$ is ramified.  Choosing a ramified place $\mathfrak p$ of $L$ and using $d_{\mathfrak p}(F/L)\ge e_{\mathfrak p}(F/L)-1$ gives
\[
 \frac{2g_F-2}{[F:L]}
 \ge \frac{r_L-1}{r_L}.
\]
Thus
\[
 [F:L]\le\frac{2r_L}{r_L-1}(g_F-1)
 \le\frac{2r}{r-1}(g_F-1).
\]
This also dominates the estimate for $g_L\ge2$.  Hence
\[
 |G|\le\frac{2r\ell}{r-1}m_\ell(q)\,R_q(g_F-1).
\]
\end{proof}

\begin{corollary}\label{cor:rational-exact}
Assume $g_{F_0}=0$. Define
\[
D_q:=
\begin{cases}
\max\{36,8q\}, & p=2,\\[4pt]
12q, & p=3,\\[4pt]
\displaystyle
\max\left\{36,\frac{2pq}{p-3}\right\}, & p\ge 5.
\end{cases}
\]
Then
\[
|G|\le R_q D_q(g_F-1).
\]
\end{corollary}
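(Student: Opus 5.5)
The plan is to derive the corollary from Theorem~\ref{thm:precise-rational} by bounding the product $k_{\ell,r}\,m_\ell(q)$ uniformly over all primes $\ell$ by $D_q$. Since $R_q$ is a common factor, it suffices to show $k_{\ell,r}m_\ell(q)\le D_q$ for every prime $\ell$ that can occur, using $r\le\ell$ and $r\ge2$. Here $r$ is the smallest prime divisor of $|G|$, so $\frac{2r}{r-1}\le4$ always, and $\frac{2r}{r-1}\le3$ when $r=3$.

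The case analysis on $\ell$ runs as follows.
\begin{itemize}
\item \textbf{The case $\ell=p$.} Here $m_\ell(q)=q$.
  \begin{itemize}
  \item If $p=2$, then $r=2$ and $k=2\cdot2\cdot2/1=8$, giving $8q$.
  \item If $p=3$, then $r\in\{2,3\}$ and $k\le 2\cdot2\cdot3=12$, giving $12q$.
  \item If $p\ge5$, then $k=\frac{2p}{p-3}$, giving $\frac{2pq}{p-3}$.
  \end{itemize}
\item \textbf{The case $\ell\ne p$.}
  \begin{itemize}
  \item If $\ell=2$ (so $p$ is odd), then $m_2(q)=4$ and $k\le 8$, giving $32\le36$.
  \item If $\ell=3\ne p$, then $m_3(q)\le3$ and $k\le\frac{2r\cdot3}{r-1}\le12$, giving $36$.
  \item If $\ell\ge5$ with $\ell\ne p$, then $m_\ell(q)\le\ell$ and $k=\frac{2\ell}{\ell-3}$, so the product is at most $\frac{2\ell^2}{\ell-3}$. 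This is not bounded uniformly in $\ell$, since it grows linearly, so a further constraint is needed.
  \end{itemize}
\end{itemize}

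The main obstacle is the last case. The nontrivial input is that $m_\ell(q)=\ell$ requires $\ell\mid q^2-1$, which gives $\ell\le q+1$. Even so, $\frac{2\ell^2}{\ell-3}$ can be about $2(q+1)$, and it is not dominated by $36$ for large $q$. It is dominated by $8q$ when $p=2$, by $12q$ when $p=3$, and by $\frac{2pq}{p-3}>2q$ when $p\ge5$, but the margin needs checking. Concretely, for $\ell\le q+1$ and $\ell\ge5$, I would verify
\[
\frac{2\ell^2}{\ell-3}\le \frac{2(q+1)^2}{q-2}.
\]
This holds because the function $x\mapsto\frac{x^2}{x-3}$ is increasing for $x\ge6$; the value $\ell=5$ gives $25$, which is checked directly. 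I would then compare $\frac{2(q+1)^2}{q-2}$ with the claimed $D_q$.
\begin{itemize}
\item For $p=2$ this is at most $8q$ once $q\ge4$. For $q=2$ no $\ell\ge5$ divides $q^2-1=3$, so the case does not occur.
\item For $p=3$ the bound $12q$ clearly suffices.
\item For $p\ge5$, the inequality $\frac{2(q+1)^2}{q-2}\le\frac{2pq}{p-3}$ may fail when $q=p^e$ is large. In that case I would fall back on $\max\{36,\cdot\}$ with small cases handled separately, and on a sharper count of $m_\ell$.
\end{itemize}

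If the $p\ge5$ comparison genuinely fails, I expect the intended argument to use something sharper than Theorem~\ref{thm:precise-rational} taken verbatim. One option is to bound $[K:F_0]$ directly by $R_q$ rather than $m_\ell R_q$, noting that $\Gal(K/F_0)$ is itself a solvable subgroup of $\PGL_2(\F_q)$. This removes the factor $m_\ell$ and leaves only $\ell\cdot\frac{2}{\ell-3}$-type contributions. Under this refinement the case $\ell\ge5$, $\ell\ne p$ contributes $\frac{2\ell}{\ell-3}\le5$. Similarly $\ell=p$ contributes $\frac{2p}{p-3}$ times at most $q$, and the small primes contribute at most $36$.

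I would try the direct comparison first. If it fails, I would switch to this refinement, since it makes every case fit inside $D_q$ cleanly.
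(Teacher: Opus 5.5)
The paper finishes with a case-by-case bound on $k_{\ell,r}\,m_\ell(q)$, as you do, and your handling of $\ell=p$, $\ell=2$ and $\ell=3$ is correct. For $\ell\ge5$, $\ell\ne p$, your argument is also correct when $p\in\{2,3\}$.

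\textbf{The gap.} It is the case $\ell\ge7$, $\ell\ne p$, $\ell\mid q^2-1$ with $p\ge5$. There the estimate $\ell\le q+1$ is too weak. For $q=p=17$ your bound gives $2(q+1)^2/(q-2)=43.2$, while
\[
D_{17}=\max\{36,\,2\cdot17^2/14\}\approx41.3 .
\]
In fact, for $q=p$ any prime $\ge17$, the quantity $2(p+1)^2/(p-2)$ exceeds both $36$ and $2p^2/(p-3)$. So neither the direct comparison nor a fallback to $\max\{36,\cdot\}$ plus finitely many small cases can close it.

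\textbf{The missing idea.} It is a parity argument. For odd $p$, the odd prime $\ell$ divides $q-1$ or $q+1$, both of which are even, so $2\ell\le q+1$. Then for $\ell\ge7$
\[
\frac{2\ell^2}{\ell-3}\le 4\ell-2\le 2q ,
\]
where the first inequality is equivalent to $\ell^2-7\ell+3\ge0$. Finally $2q\le12q$ for $p=3$ and $2q<2pq/(p-3)$ for $p\ge5$. This is how the paper concludes. For $p=2$ it uses $\ell\le q+1$ and $2\ell^2/(\ell-3)\le8(\ell-1)\le8q$, which your monotonicity argument also achieves.

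\textbf{Your proposed refinement.} The bound $[K:F_0]\le R_q$ via $\Gal(K/F_0)\le\PGL_2(\F_q)$ is not justified. In Theorem~\ref{thm:precise-rational}, $K=K_j$ is the fixed field of the preimage $\widetilde A_j\le G_i$ of $A_j$. Because $G_i/G_{i+1}$ is a chief factor, no subgroup strictly between $G_{i+1}$ and $G_i$ is normal in $G$. So for $0<j<a$ the extension $K/F_0$ is not Galois, and there is no group $\Gal(K/F_0)$ to place inside $\PGL_2(\F_q)$.

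What one actually has is $[K:F_0]=[K:F_i][F_i:F_0]$, where $K/F_i$ is Galois with elementary abelian group $A/A_j\le\Aut(K/\F_q)$. That is exactly where the factor $m_\ell(q)$ comes from, and this argument cannot remove it. Your sketch is also inconsistent here, since it still keeps a factor $q$ when $\ell=p$.
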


\begin{proof}
By Theorem~\ref{thm:precise-rational},
\[
|G|\le R_q\,k_{\ell,r}\,m_\ell(q)(g_F-1).
\]
where $r$ is the smallest prime divisor of $|G|$.

For $\ell=2$ one has $r=2$, and hence
\[
 8m_2(q)=
 \begin{cases}
  8q,&p=2,\\
  32,&p\ne2.
 \end{cases}
\]
For $\ell=3$ one has $r\in\{2,3\}$, so
\[
 \frac{2r\ell}{r-1}m_3(q)\le 12m_3(q)
 =
 \begin{cases}
  12q,&p=3,\\
  36,&p\ne3.
 \end{cases}
\]
It remains to consider $\ell\ge5$.  If $\ell=p$, then necessarily $p\ge5$ and
\[
 \frac{2\ell}{\ell-3}m_\ell(q)
 =
 \frac{2pq}{p-3}.
\]
Suppose that $\ell\ne p$.  If $\ell\nmid(q^2-1)$, then $m_\ell(q)=1$ and
\[
 \frac{2\ell}{\ell-3}\le5.
\]
If $\ell\mid(q^2-1)$, then $m_\ell(q)=\ell$ and the contribution is
\[
 \frac{2\ell^2}{\ell-3}.
\]
For $\ell=5$ this equals $25$.  Assume therefore that $\ell\ge7$.

If $p=2$, then $\ell\le q+1$, and
\[
 \frac{2\ell^2}{\ell-3}
 \le 8(\ell-1)
 \le 8q.
\]
If $p$ is odd, then $\ell$ is odd and $\ell\mid(q-1)$ or $\ell\mid(q+1)$.  Since $q\pm1$ is even,
\[
 2\ell\le q+1.
\]
Moreover,
\[
 \frac{2\ell^2}{\ell-3}\le4\ell-2\le2q.
\]
Thus this term is bounded by $12q$ when $p=3$, and by $2pq/(p-3)$ when $p\ge5$.  Combining the above cases gives the stated bound.
\end{proof}

\begin{theorem}\label{cor:Sq-upper}
Let $q=p^e$.  Define
\[
 C_q^{(\mathrm{sol})}:=
 R_qD_q
\]
where $R_q$ and $D_q$ are defined in \eqref{eq:Rq-definition} and  \Cref{cor:rational-exact}, respectively. Then for every function field $F/\F_q$ of genus $g_F\ge2$ and every  solvable subgroup $G\le\Aut(F/\F_q)$,
\[
 |G|\le C_q^{(\mathrm{sol})}(g_F-1).
\]
Moreover, for every $q\ge4$,
\[
 C_q^{(\mathrm{sol})}\le 12q^2(q-1).
\]
In particular, \(
 C_2^{(\mathrm{sol})}=216,
 C_3^{(\mathrm{sol})}=864.
\)
\end{theorem}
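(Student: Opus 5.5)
The proof splits according to the genus of the fixed field $F_0:=F^G$, and essentially all the work is already in the preceding results. If $g_{F_0}\ge2$, Proposition~\ref{prop:g0-ge2} gives $|G|\le g_F-1$. If $g_{F_0}=1$, Proposition~\ref{prop:g0-eq1} gives $|G|\le4(g_F-1)$. If $g_{F_0}=0$, Corollary~\ref{cor:rational-exact} gives $|G|\le R_qD_q(g_F-1)$. Since $R_q\ge6$ and $D_q\ge36$, we always have $R_qD_q\ge4$. Hence in all three cases $|G|\le C_q^{(\mathrm{sol})}(g_F-1)$ with $C_q^{(\mathrm{sol})}=R_qD_q$.

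For the numerical claims we evaluate $R_qD_q$ case by case.

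For $q=2$ we have $R_2=6$ and $D_2=\max\{36,16\}=36$, so $C_2^{(\mathrm{sol})}=216$.

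For $q=3$ we have $R_3=24$ and $D_3=12\cdot3=36$, so $C_3^{(\mathrm{sol})}=864$.

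For $q\ge4$ we show $R_qD_q\le12q^2(q-1)$, treating $q=5$ separately.

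\emph{The case $q=5$.} Here $R_5=24$ and $D_5=\max\{36,2\cdot5\cdot5/2\}=36$, so $R_5D_5=864\le12\cdot25\cdot4=1200$.

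\emph{The case $q\ne5$, $q\ge4$.} Here $R_q=q(q-1)$, so it suffices to show $D_q\le12q$.
\begin{itemize}
\item If $p=2$, then $q\ge4$, so $8q\ge32$ and $\max\{36,8q\}\le\max\{36,12q\}=12q$, since $12q\ge48>36$.
\item If $p=3$, then $D_q=12q$ exactly.
\item If $p\ge5$, then $2pq/(p-3)\le5q\le12q$, because $2p/(p-3)$ is decreasing in $p$ and equals $5$ at $p=5$. Also $36\le12q$ since $q\ge5$.
\end{itemize}
Hence $C_q^{(\mathrm{sol})}\le12q^2(q-1)$ for every $q\ge4$.

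There is no real obstacle here. The only points needing care are the exceptional values $R_q=24$ at $q=3,5$, and the observation that $D_q\ge36$ dominates the constants from the cases $g_{F_0}\ge1$.
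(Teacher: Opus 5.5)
Your proposal is correct and follows the paper's proof: the same three-way split on $g_{F_0}$ using Proposition~\ref{prop:g0-ge2}, Proposition~\ref{prop:g0-eq1} and Corollary~\ref{cor:rational-exact}, finished by the observation that $R_qD_q\ge4$. Your case-by-case check of $C_2^{(\mathrm{sol})}=216$, $C_3^{(\mathrm{sol})}=864$ and $R_qD_q\le12q^2(q-1)$ for $q\ge4$ is accurate, including the separate treatment of the exceptional value $R_5=24$, and it supplies arithmetic that the paper leaves implicit.
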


\begin{proof}
Write $F_0=F^G$. If $g_{F_0}=0$, Corollary~\ref{cor:rational-exact} gives the required bound.  If $g_{F_0}=1$, Proposition~\ref{prop:g0-eq1} gives
\[
 |G|\le4(g_F-1),
\]
while if $g_{F_0}\ge2$, Proposition~\ref{prop:g0-ge2} gives
\[
 |G|\le g_F-1.
\]
Since $C_q^{(\mathrm{sol})}\ge4$, the same bound holds in all cases.
\end{proof}

\subsection{Upper bounds for \texorpdfstring{$p$-subgroups}{p-subgroups}}
\label{sec:p-subgroup-upper-bounds}

Let $q=p^e$, where $p=\charac(\F_q)$. Define
\begin{equation}\label{eq:def-Cqp}
C_q^{(p)}:=
\begin{cases}
   4q,&p=2,\\[2mm]
   \dfrac92q,&p=3,\\[2mm]
   \dfrac{2pq}{p-3},&p\ge5.
\end{cases}
\end{equation}
We prove the bound announced in \eqref{eq:intro-uniform-p}.  The argument uses two features which are specific to $p$-groups: their chief factors have order $p$, and wild ramification gives a stronger lower bound for the different.

\begin{lemma}\label{lem:p-wild-different}
Let $L/K$ be a finite Galois $p$-extension in characteristic $p$.  If a place $\mathfrak p$ of $K$ is ramified in $L/K$, then
\[
 d_{\mathfrak p}(L/K)\ge2\bigl(e_{\mathfrak p}(L/K)-1\bigr),
\]
and therefore
\[
 \frac{d_{\mathfrak p}(L/K)}{e_{\mathfrak p}(L/K)}
 \ge\frac{2(p-1)}p.
\]
\end{lemma}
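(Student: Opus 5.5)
The plan is to use Hilbert's different formula \eqref{eq:Hilbert-different} and bound the first two terms separately. Fix a place $\mathfrak P$ of $L$ above the ramified place $\mathfrak p$, and write $I_j:=I_j(\mathfrak P)$ for the lower ramification groups. Since $L/K$ is a Galois $p$-extension, the inertia group $I_0$ is a $p$-group, and its order is $e:=e_{\mathfrak p}(L/K)$; because $\mathfrak p$ is ramified, $e>1$. I would then invoke the standard structure of the ramification filtration:
\begin{itemize}
\item $I_0/I_1$ embeds into the multiplicative group of the residue field, so it has order prime to $p$;
\item $I_1$ is the unique Sylow $p$-subgroup of $I_0$.
\end{itemize}
Since $I_0$ is itself a $p$-group, this forces $I_1=I_0$, so $|I_1|=e$ as well.

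Substituting into \eqref{eq:Hilbert-different} and keeping only the terms $j=0,1$ (all other terms are nonnegative) gives
\[
 d_{\mathfrak p}(L/K)\ge (|I_0|-1)+(|I_1|-1)=2(e-1).
\]
This is the first inequality. Dividing by $e$ yields $d_{\mathfrak p}/e\ge 2(1-1/e)$. The function $2(1-1/e)$ is increasing in $e$, and $e$ is a nontrivial power of $p$, so $e\ge p$. Hence $d_{\mathfrak p}/e\ge 2(p-1)/p$, which is the second inequality.

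The only step needing care is the identification $I_1=I_0$. This is standard: \cite{HX2001} or Serre's \emph{Local Fields} show that $I_0/I_1$ injects into $\kappa(\mathfrak P)^\times$. The claim is independent of the choice of $\mathfrak P$ because all places above $\mathfrak p$ are conjugate. The remaining arithmetic is routine.
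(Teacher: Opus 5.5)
Your proposal is correct and matches the paper's proof: the paper also uses $I_0(\mathfrak P)=I_1(\mathfrak P)$ because the inertia group is a $p$-group (citing \cite[Theorem~1.4.7]{HX2001}), keeps the $j=0,1$ terms of Hilbert's different formula, and concludes using $e_{\mathfrak p}(L/K)\ge p$. The only difference is that you spell out the standard justification of $I_0=I_1$, namely that $I_0/I_1$ embeds into $\kappa(\mathfrak P)^\times$, whereas the paper simply cites it.
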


\begin{proof}
Let $\mathfrak P$ be a place of $L$ above $\mathfrak p$.  Since the inertia group is a $p$-group, its first two lower ramification groups satisfy $I_0(\mathfrak P)=I_1(\mathfrak P)$  \cite[Theorem~1.4.7]{HX2001}.  Hilbert's different formula \eqref{eq:Hilbert-different} gives
\[
 d_{\mathfrak p}(L/K)
 \ge2\bigl(|I_0(\mathfrak P)|-1\bigr)
 =2\bigl(e_{\mathfrak p}(L/K)-1\bigr).
\]
Since a ramified $p$-extension has $e_{\mathfrak p}(L/K)\ge p$, the second inequality follows.
\end{proof}

\begin{proposition}\label{prop:pgroup-genus-one}
Let $P\le\Aut(F/\F_q)$ be a finite $p$-subgroup and put $F_0=F^P$.  If $g_{F_0}=1$, then
\[
 |P|\le\frac{p}{p-1}(g_F-1).
\]
\end{proposition}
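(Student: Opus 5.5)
The plan follows the proof of Proposition~\ref{prop:g0-eq1}, with the tame different estimate replaced by the wild one from Lemma~\ref{lem:p-wild-different}.

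\textbf{Step 1: $F/F_0$ is ramified.} The extension $F/F_0$ is Galois with group $P$, so the normalized Hurwitz formula \eqref{eq:normalized-Hurwitz} applies. Since $g_{F_0}=1$, the term $2g_{F_0}-2$ vanishes and we get
\[
 \frac{2g_F-2}{|P|}=\sum_{\mathfrak p\in\mathbb P_{F_0}}\frac{d_{\mathfrak p}(F/F_0)}{e_{\mathfrak p}(F/F_0)}\deg\mathfrak p .
\]
Because $g_F\ge2$, the left-hand side is positive. Hence at least one place $\mathfrak p$ of $F_0$ ramifies in $F$; otherwise the sum would be zero and $g_F=1$.

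\textbf{Step 2: bound the contribution of one ramified place.} Since $P$ is a $p$-group, $F/F_0$ is a Galois $p$-extension in characteristic $p$. Lemma~\ref{lem:p-wild-different} therefore gives
\[
 \frac{d_{\mathfrak p}}{e_{\mathfrak p}}\ge\frac{2(p-1)}{p}.
\]
We also have $\deg\mathfrak p\ge1$, and every other term in the sum is nonnegative.

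\textbf{Step 3: conclude.} Dropping all other places yields
\[
 \frac{2g_F-2}{|P|}\ge\frac{2(p-1)}{p},
\]
which rearranges to $|P|\le\frac{p}{p-1}(g_F-1)$.

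No step is a real obstacle. The only point needing justification is the inequality $d\ge 2(e-1)$ in Step~2. It holds because the inertia group equals the first ramification group for $p$-groups, and this is exactly the content of Lemma~\ref{lem:p-wild-different}, which is already proved.
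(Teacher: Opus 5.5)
Your proposal is correct and follows the paper's proof essentially verbatim. You force ramification of $F/F_0$ from $g_{F_0}=1$ and $g_F\ge2$ via the Hurwitz formula, then apply Lemma~\ref{lem:p-wild-different} at a single ramified place in \eqref{eq:solvable-Hurwitz} to get $(2g_F-2)/|P|\ge 2(p-1)/p$; like the paper, you rely on the standing assumption $g_F\ge 2$, which the statement itself leaves implicit.
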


\begin{proof}
The extension $F/F_0$ is ramified, since an unramified extension of a genus-one function field again has genus one.  Choose a ramified place $\mathfrak p$ of $F_0$.  Lemma~\ref{lem:p-wild-different} and \eqref{eq:solvable-Hurwitz} give
\[
 \frac{2g_F-2}{|P|}
 \ge\frac{2(p-1)}p,
\]
which proves the assertion.
\end{proof}

\begin{proposition}\label{prop:pgroup-rational}
Let $P\le\Aut(F/\F_q)$ be a  $p$-subgroup and put $F_0=F^P$.  If $g_{F_0}=0$, then
\[
 |P|\le
 \begin{cases}
  4q(g_F-1),&p=2,\\[2pt]
  \dfrac92q(g_F-1),&p=3,\\[6pt]
  \dfrac{2pq}{p-3}(g_F-1),&p\ge5.
 \end{cases}
\]
\end{proposition}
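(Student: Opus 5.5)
The plan is to run the argument of Theorem~\ref{thm:precise-rational} with $G=P$. Two features of $p$-groups should improve the constants: every chief factor is cyclic of order $p$, so no elementary abelian layer has to be split; and Proposition~\ref{prop:pgroup-genus-one} gives a better bound in the genus-one case.

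First I fix the chief series. Since $P$ is a $p$-group, every minimal normal subgroup of a quotient of $P$ is central of order $p$. Hence a chief series
\[
P=P_0\supsetneq P_1\supsetneq\cdots\supsetneq P_s=1
\]
has all factors $P_i/P_{i+1}\cong\Z/p\Z$. Put $F_i:=F^{P_i}$ and let $i$ be maximal with $g_{F_i}=0$; this is possible since $g_{F_0}=0$ and $g_{F_s}=g_F\ge2$. Set $K:=F_i$ and $L:=F_{i+1}$. Then $K$ and $F_0$ are rational, $[L:K]=p$ and $g_L>0$. The group $\Gal(K/F_0)\cong P/P_i$ is a $p$-subgroup of $\Aut(K/\F_q)\cong\PGL_2(\F_q)$. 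A Sylow $p$-subgroup of $\PGL_2(\F_q)$ has order $q$; equivalently, by the subgroup list used in Proposition~\ref{prop:PGL2-elementary}, every $p$-subgroup is elementary abelian of order at most $q$. Hence $[K:F_0]\le q$. Finally, $F/L$ is Galois with group $P_{i+1}$, which is again a $p$-group, and
\[
|P|=[F:L]\cdot p\cdot[K:F_0]\le pq\,[F:L].
\]

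It remains to bound $[F:L]$ in terms of $g_F-1$.

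\emph{Case $p\ge5$.} Lemma~\ref{lem:prime-layer-large}, applied with $\ell=p$ to the Artin--Schreier layer $L/K$, gives $2g_L-2\ge p-3>0$. The Hurwitz formula for $F/L$ then gives $[F:L]\le 2(g_F-1)/(p-3)$, and therefore $|P|\le \frac{2pq}{p-3}(g_F-1)$.

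\emph{Case $p\in\{2,3\}$.} Here Lemma~\ref{lem:prime-layer-large} gives nothing, so I split according to $g_L$.
\begin{itemize}
\item If $g_L\ge2$, Hurwitz gives $[F:L]\le g_F-1$, hence $|P|\le pq(g_F-1)$, which is $2q(g_F-1)$ or $3q(g_F-1)$.
\item If $g_L=1$, apply Proposition~\ref{prop:pgroup-genus-one} to the $p$-group $P_{i+1}\le\Aut(F/\F_q)$ with fixed field $L$. This gives $[F:L]\le\frac{p}{p-1}(g_F-1)$. Hence $|P|\le\frac{p^2}{p-1}q(g_F-1)$, which is $4q(g_F-1)$ for $p=2$ and $\frac92q(g_F-1)$ for $p=3$.
\end{itemize}
In both subcases these bounds dominate the $g_L\ge2$ estimates, giving the stated constants.

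The only step I expect to need care is the bound $[K:F_0]\le q$ for $p$-subgroups of $\PGL_2(\F_q)$. Proposition~\ref{prop:PGL2-elementary} is stated only for elementary abelian groups. So I would either cite the fact that the Sylow $p$-subgroups of $\PGL_2(\F_q)$ are the translation groups of order $q$, which follows from $|\PGL_2(\F_q)|=q(q^2-1)$, or note from Theorem~A.8 that every $p$-subgroup is elementary abelian. The rest is a direct transcription of the proof of Theorem~\ref{thm:precise-rational}.
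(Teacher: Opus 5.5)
Your proposal is correct and follows the paper's proof essentially step for step: the chief series with factors of order $p$, the Sylow bound $[K:F_0]\le q$ in $\PGL_2(\F_q)$, the factorization $|P|=p\,[K:F_0]\,[F:L]$, Lemma~\ref{lem:prime-layer-large} for $p\ge5$, and the split into $g_L\ge2$ and $g_L=1$ for $p\in\{2,3\}$. The only cosmetic difference is that for $g_L=1$ you cite Proposition~\ref{prop:pgroup-genus-one} applied to $P_{i+1}$, whereas the paper invokes Lemma~\ref{lem:p-wild-different} directly; this is the same argument.
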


\begin{proof}
Choose a chief series
\[
 P=P_0\supsetneq P_1\supsetneq\cdots\supsetneq P_m=1,
 \qquad F_i:=F^{P_i}.
\]
Every chief factor of a  $p$-group has order $p$, so $[F_{i+1}:F_i]=p$ at each step of this chief series.  Choose $j$ maximal with $g_{F_j}=0$ and write
\[
 K:=F_j,\qquad L:=F_{j+1},\qquad \nu:=[F:L].
\]
Then
\[
 g_K=0<g_L,\qquad [L:K]=p.
\]
Both $K$ and $F_0$ have full constant field $\F_q$.  Since $g_K=0,$ $K$ is a rational function field. Therefore $\Gal(K/F_0)$ is a $p$-subgroup of $\PGL_2(\F_q)$.  As the $p$-part of $|\PGL_2(\F_q)|$ equals $q$, we obtain
\[
 [K:F_0]\le q.
\]
Consequently
\begin{equation}\label{eq:pgroup-factorization}
 |P|=\nu p[K:F_0].
\end{equation}
Assume first that $p\ge5$.  Lemma~\ref{lem:prime-layer-large} gives $2g_L-2\ge p-3$.  Applying the Hurwitz genus formula to $F/L$ yields
\[
 \nu\le\frac{2(g_F-1)}{p-3}.
\]
Together with \eqref{eq:pgroup-factorization}, this gives
\[
 |P|\le\frac{2pq}{p-3}(g_F-1).
\]

Now let $p\in\{2,3\}$.  If $g_L\ge2$, then Hurwitz gives $\nu\le g_F-1$.  If $g_L=1$, the Galois $p$-extension $F/L$ is ramified, and Lemma~\ref{lem:p-wild-different} gives
\[
 2g_F-2=\deg\Diff(F/L)
 \ge \nu\frac{2(p-1)}p.
\]
Hence
\[
 \nu\le\frac{p}{p-1}(g_F-1).
\]
Using \eqref{eq:pgroup-factorization} in both cases gives
\[
 |P|\le\frac{p^2q}{p-1}(g_F-1),
\]
which is $4q(g_F-1)$ for $p=2$ and $9q(g_F-1)/2$ for $p=3$.
\end{proof}

\begin{theorem}\label{thm:Pq-upper}
Let $q=p^e$, let $F/\F_q$ be a function field of genus $g_F\ge2$, and let
$P\le\Aut(F/\F_q)$ be a  $p$-subgroup.  Then
\[
 |P|\le C_q^{(p)}(g_F-1),
\]
where $C_q^{(p)}$ is defined by \eqref{eq:def-Cqp}.  In particular, one may uniformly take $C_q^{(p)}=5q$ for every $q$.
\end{theorem}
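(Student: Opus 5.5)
The plan is to split according to the genus of the fixed field $F_0:=F^P$, exactly as was done for solvable groups in Theorem~\ref{cor:Sq-upper}, and then to compare the resulting constants with $C_q^{(p)}$ and with $5q$.

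\emph{Case $g_{F_0}\ge2$.} The normalized Hurwitz formula \eqref{eq:solvable-Hurwitz} has right-hand side at least $2g_{F_0}-2\ge2$. This gives $|P|\le g_F-1$, as in Proposition~\ref{prop:g0-ge2}.

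\emph{Case $g_{F_0}=1$.} Proposition~\ref{prop:pgroup-genus-one} gives $|P|\le\frac{p}{p-1}(g_F-1)\le 2(g_F-1)$.

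\emph{Case $g_{F_0}=0$.} Proposition~\ref{prop:pgroup-rational} gives precisely $|P|\le C_q^{(p)}(g_F-1)$, with $C_q^{(p)}$ as in \eqref{eq:def-Cqp}.

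To combine the three cases, I only need $C_q^{(p)}\ge2$ for every $q$. This holds because $4q\ge8$, $\tfrac92q\ge\tfrac{27}{2}$, and $\tfrac{2pq}{p-3}>2q\ge10$ for $p\ge5$. Hence the bound $|P|\le C_q^{(p)}(g_F-1)$ holds in every case.

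For the uniform constant $5q$, I check $C_q^{(p)}\le5q$ case by case:
\begin{itemize}
\item for $p=2$, $4q\le5q$;
\item for $p=3$, $\tfrac92q\le5q$;
\item for $p\ge5$, $\tfrac{2p}{p-3}\le5$ is equivalent to $2p\le5p-15$, that is $p\ge5$, with equality at $p=5$, where $C_q^{(p)}=5q$.
\end{itemize}

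All the substantive work is already contained in the earlier propositions, so there is no real obstacle. The only points needing care are the elementary inequality at $p=5$, which is sharp, and the observation that the constants for $g_{F_0}\ge1$ are dominated by $C_q^{(p)}$.
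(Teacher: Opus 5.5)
Your proposal is correct and matches the paper's proof: you split on $g_{F_0}$ and invoke Propositions~\ref{prop:g0-ge2}, \ref{prop:pgroup-genus-one} and \ref{prop:pgroup-rational}, exactly as the paper does. Your combining step is slightly more careful than the paper's. The paper only notes $C_q^{(p)}\ge1$, which does not by itself absorb the factor $\frac{p}{p-1}$ (equal to $2$ when $p=2$) from the genus-one case, whereas your bound $C_q^{(p)}\ge2$ does. You also spell out the check $C_q^{(p)}\le5q$, which the paper leaves implicit.
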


\begin{proof}
Put $F_0=F^P$. If $g_{F_0}=0$, Proposition~\ref{prop:pgroup-rational} gives the required bound.  If $g_{F_0}=1$, apply Proposition~\ref{prop:pgroup-genus-one}; if $g_{F_0}\ge2$, Proposition~\ref{prop:g0-ge2} gives $|P|\le g_F-1$.  Since $C_q^{(p)}\ge1$, the stated estimate holds in all cases.
\end{proof}

\begin{remark}\label{rem:p-versus-solvable}
The constants in the solvable and $p$-group bounds behave quite differently as $q$ grows.  More precisely,
\[
   C_q^{(\mathrm{sol})}=O(q^3),
   \qquad
   C_q^{(p)}=O(q),
\]
so the latter is smaller by a factor of order $q^2$. The main reason is as follows:  In the general solvable case, the genus-zero part may contribute a solvable subgroup of $\PGL_2(\F_q)$ of order of size $q^2$, and the first positive-genus chief factor may introduce an additional factor of order $q$ through the bound for elementary abelian subgroups.  In the $p$-group case, the corresponding subgroup of $\PGL_2(\F_q)$ has order at most $q$, while every chief factor has order exactly $p$.  These two restrictions account for the improvement from order $q^3$ to order $q$.
\end{remark}

\section{Asymptotic lower bounds for solvable automorphism groups}
\label{sec:asymptotic-lower-bounds}

For a function field \(K/\F_q\), a prime \(\ell\ne p\), and disjoint finite sets \(S,T\) of places of \(K\), let \(K_T^S\) denote the maximal pro-\(\ell\) extension of \(K\) in which every place of \(S\) splits completely and which is unramified outside \(T\). We write
\[
   G_T^S:=\Gal(K_T^S/K).
\]
Thus \(S\) is the prescribed splitting set, whereas \(T\) is the allowed ramification set. Since \(\ell\ne p\), every finite subextension of \(K_T^S/K\) is tamely ramified. 

\begin{proposition}\label{prop:restricted-lifting}
Let \(K/\F_q\) be a function field of genus \(g_K\), let \(\ell\ne p\) be a prime, and let \(S,T\) be disjoint finite sets of places with \(S\ne\varnothing\). Assume that the following conditions hold:
\begin{enumerate}
\item \(G_T^S\) is infinite, and \(S\) contains a place \(P\) such that \(\gcd(\deg P,\ell)=1\),
\item There exists a  solvable subgroup \(H\le \Aut(K/\F_q)\) such that \(\sigma(S)=S\) and \(\sigma(T)=T\) for all \(\sigma\in H\), and \(2g_K-2+\sum_{Q\in T}\deg Q>0\).
\end{enumerate}

Then there is a sequence of function fields $E_n/\F_q$ with $g_{E_n}\to\infty$, together with  solvable subgroups $G_n\le\Aut(E_n/\F_q)$, such that
\[
   \frac{|G_n|}{g_{E_n}-1}
   \ge
   \frac{2|H|}
   {\,2g_K-2+\displaystyle\sum_{Q\in T}\deg Q\,}
\]
for every $n$.  In particular, if  $T=\varnothing$ and $g_K\ge2$, the right-hand side is $|H|/(g_K-1)$.
\end{proposition}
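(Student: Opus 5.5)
Since $G_T^S$ is infinite, the plan is to build an $H$-stable tower inside $K_T^S$. The key observation is that $K_T^S$ is canonically attached to the triple $(K,S,T)$. Every $\sigma\in H$ preserves $S$ and $T$, so it carries a pro-$\ell$ extension unramified outside $T$ and split completely at $S$ to another extension of the same kind. Hence any extension $\tilde\sigma$ of $\sigma$ to a separable closure maps $K_T^S$ onto itself, and $M:=K_T^S$ is Galois over $K^H$. For the finite layers I take the characteristic subgroups of $G_T^S$: the iterated Frattini (or lower exponent-$\ell$ central) series $\Phi_n$. These have finite index, because $G_T^S$ is topologically finitely generated by \eqref{eq:generator-ray-class-rank}, and they form a basis of neighbourhoods of $1$. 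Put $E_n:=M^{\Phi_n}$. Since $\Phi_n$ is characteristic in $G_T^S$, which is normal in $\Gal(M/K^H)$, the field $E_n$ is Galois over $K^H$. Set $G_n:=\Gal(E_n/K^H)$. It is an extension of the finite $\ell$-group $G_T^S/\Phi_n$ by $H$, so it is solvable, and
\[
|G_n|=|H|\,[E_n:K].
\]

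Next I need $G_n\le\Aut(E_n/\F_q)$, that is, $E_n$ must have full constant field $\F_q$. This is where the place $P\in S$ with $\gcd(\deg P,\ell)=1$ enters. $P$ splits completely in $E_n/K$, so each place above it has degree $\deg P$. A constant field extension $\F_{q^m}\subseteq E_n$ with $m>1$ would be contained in the $\ell$-extension $E_n/K$, so $m$ would be a power of $\ell$. Every place of $E_n$ would then have degree divisible by $m$, hence by $\ell$, which contradicts $\gcd(\deg P,\ell)=1$. Thus $E_n/\F_q$ has full constant field $\F_q$, and $G_n$ acts on it by $\F_q$-automorphisms. Since $[E_n:K]\to\infty$ and the quantity $2g_K-2+\sum\deg Q$ is positive, the genus formula below gives $g_{E_n}\to\infty$.

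For the genus bound, $E_n/K$ is Galois, tamely ramified (because $\ell\ne p$), and unramified outside $T$. By \eqref{eq:normalized-Hurwitz} and Dedekind's theorem $d=e-1$,
\[
\frac{2g_{E_n}-2}{[E_n:K]}=2g_K-2+\sum_{Q\in T}\frac{e_Q-1}{e_Q}\deg Q\le 2g_K-2+\sum_{Q\in T}\deg Q.
\]
Substituting into $|G_n|=|H|[E_n:K]$ gives the stated inequality. For $T=\varnothing$ and $g_K\ge2$ the right-hand side equals $|H|/(g_K-1)$.

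The main obstacle is the first step, namely showing that $H$ actually lifts to automorphisms of the finite layers $E_n$, which requires $E_n$ to be Galois over $K^H$. I would handle this through the canonicity of $K_T^S$ and the choice of characteristic subgroups as described above. I would also check that $\tilde\sigma(K_T^S)=K_T^S$ uses $\sigma(S)=S$ in the correct direction. If $\sigma$ maps places of $S$ to places of $S$, then conjugating by $\tilde\sigma$ sends decomposition groups over $S$ to decomposition groups over $S$, so total splitting at $S$ is preserved; the same argument applied to inertia groups shows that $\tilde\sigma$ preserves being unramified outside $T$.
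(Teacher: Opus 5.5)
There is a genuine gap in the step where you conclude $g_{E_n}\to\infty$ ``from the genus formula below''. That formula gives
\[
\frac{2g_{E_n}-2}{[E_n:K]}=2g_K-2+\sum_{Q\in T}\Bigl(1-\frac{1}{e_Q}\Bigr)\deg Q .
\]
The hypothesis $2g_K-2+\sum_{Q\in T}\deg Q>0$ only bounds the right-hand side from above and gives no positive lower bound. If $g_K=1$ and no place of $T$ ramifies in $E_n$, the right-hand side is $0$. If $g_K=0$ it can be negative: three rational places of $T$ with $e_Q=2$ give $-\tfrac12$. Ruling out that this quantity stays $\le 0$ along the whole tower is exactly what must be shown, and the genus formula alone cannot do it. The same omission affects your last step, since dividing by $g_{E_n}-1$ requires $g_{E_n}\ge 2$.

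The missing ingredient is the one the paper uses, and you already have it. Since $P$ splits completely in $E_n/K$ and $E_n$ has full constant field $\F_q$, the field $E_n$ has $[E_n:K]$ places of degree $\deg P$. The Hasse--Weil bound then gives
\[
(\deg P)\,[E_n:K]\le N_{\deg P}(E_n)\le q^{\deg P}+1+2g_{E_n}q^{\deg P/2},
\]
which forces $g_{E_n}\to\infty$ as $[E_n:K]\to\infty$. Discard the finitely many $n$ with $g_{E_n}\le 1$; your Hurwitz estimate then yields the stated inequality.

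Apart from this, your choice of finite layers is a valid alternative to the paper's. The paper takes the Galois closure over $K^H$ of arbitrary finite subextensions $L_n\subseteq K_T^S$. You instead use the iterated Frattini subgroups of $G_T^S$. These are open because $d(G_T^S)=d_\ell\operatorname{Cl}_K(S,T)<\infty$. They are topologically characteristic, hence normal in $\Gal(K_T^S/K^H)$, and cofinal, so $[E_n:K]\to\infty$. This gives canonical layers, at the cost of finite generation and standard pro-$\ell$ facts; the paper needs only that $K_T^S/K^H$ is Galois. The stability of $K_T^S$ under $\tilde\sigma$, the constant-field argument via $P$, and the final Hurwitz bound all match the paper.
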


\begin{proof}

Fix \(\sigma\in H\), and choose an extension \(\widetilde{\sigma}\) of \(\sigma\) to \(K^{\mathrm{sep}}\). Since \(\sigma(S)=S\) and \(\sigma(T)=T\), conjugation by \(\widetilde{\sigma}\) preserves the splitting and ramification conditions defining \(K_T^S\): indeed, ramification indices and residue degrees are invariant under conjugation; see \cite[Lemma~3.5.2]{Stichtenoth2009}. Hence \(\widetilde{\sigma}(K_T^S)/K\) is again a pro-\(\ell\) extension in which every place of \(S\) splits completely and which is unramified outside \(T\). By the maximality of \(K_T^S\),  \(\widetilde{\sigma}(K_T^S)\subseteq K_T^S.\)  Since \(\widetilde{\sigma}^{-1}\) extends \(\sigma^{-1}\), and \(\sigma^{-1}(S)=S\) and \(\sigma^{-1}(T)=T\), the same argument gives
\(\widetilde{\sigma}^{-1}(K_T^S)\subseteq K_T^S\). Applying \(\widetilde{\sigma}\) to the latter inclusion yields \(K_T^S\subseteq\widetilde{\sigma}(K_T^S)\). Hence
\[
   \widetilde{\sigma}(K_T^S)=K_T^S.
\]

We now show that \(K_T^S/K^H\) is Galois. Let \(\tau:K_T^S\hookrightarrow K^{\mathrm{sep}}\) be a \(K^H\)-embedding. Since \(K/K^H\) is Galois with Galois group \(H\), the restriction \(\tau|_K=\sigma \) for some \(\sigma\in H\). Choose an extension \(\widetilde{\sigma}\) as above, so that \(\widetilde{\sigma}(K_T^S)=K_T^S\). Then \(\widetilde{\sigma}^{-1}\tau\) fixes \(K\).  Thus $\widetilde{\sigma}^{-1}\tau:K_T^S\hookrightarrow K^{\mathrm{sep}}$ is a \(K\)-embedding. Since \(K_T^S/K\) is Galois, it follows that \(
   \widetilde{\sigma}^{-1}\tau(K_T^S)=K_T^S.
\) Consequently \(\tau(K_T^S)=K_T^S\). Thus \(K_T^S/K^H\) is normal, and hence Galois. Restriction to \(K\) therefore gives an exact sequence
\[
   1\longrightarrow G_T^S
   \longrightarrow \Gal(K_T^S/K^H)
   \longrightarrow H
   \longrightarrow 1.
\]

Choose a sequence of finite Galois subextensions \(L_n/K\) of \(K_T^S/K\) such that \([L_n:K]\to\infty\) as \(n\to\infty\). For each \(n\), let \(E_n\) be the Galois closure of \(L_n/K^H\) inside \(K_T^S\). Equivalently, \(E_n\) is the compositum of all \(K^H\)-conjugates of \(L_n\). Since \(K_T^S/K^H\) is Galois, all these conjugate fields are contained in \(K_T^S\), and hence \(E_n\subseteq K_T^S\).

By construction, \(E_n/K^H\) is a finite Galois extension. Moreover, \(L_n\subseteq E_n\), and therefore \([E_n:K]\ge [L_n:K]\to\infty\) as \(n\to\infty\). Restriction to \(K\) gives an exact sequence
\[
   1\longrightarrow \Gal(E_n/K)
   \longrightarrow \Gal(E_n/K^H)
   \longrightarrow H
   \longrightarrow 1.
\]
Since \(E_n\subseteq K_T^S\) and \(K_T^S/K\) is a pro-\(\ell\) extension, \(\Gal(E_n/K)\) is an \(\ell\)-group, hence solvable. As \(H\) is solvable, it follows that \(\Gal(E_n/K^H)\) is solvable. Finally,
\[
   |\Gal(E_n/K^H)|=|H|\,[E_n:K].
\]

We next show that every \(E_n\) has full constant field \(\F_q\). Suppose that its full constant field is \(\F_{q^m}\). Since \(K\F_{q^m}\subseteq E_n\), we have \(m\mid [E_n:K]\), so \(m\) is an \(\ell\)-power. If \(P'\) is a place of \(E_n\) above \(P\), then \(P\) splits completely in \(E_n/K\), and hence \(\deg P'=\deg P\). On the other hand, \(m\mid\deg P'\) because the full constant field of \(E_n\) is \(\F_{q^m}\). Thus \(m\mid\deg P\). Since \(\gcd(\deg P,\ell)=1\), we obtain \(m=1\). Hence \(E_n\) has full constant field \(\F_q\).

Moreover, \(g_{E_n}\to\infty\). Indeed, complete splitting of \(P\) gives \([E_n:K]\) distinct places of degree \(\deg P\) in \(E_n\), so \(\#E_n(\F_{q^{\deg P}})\ge (\deg P) [E_n:K]\). By the Hasse--Weil bound,
\[
    (\deg P) [E_n:K]
   \le
   q^{\deg P}+1+2g_{E_n}q^{\deg P/2},
\]
and therefore \(g_{E_n}\to\infty\) as \([E_n:K]\to\infty\).

Finally, \(E_n/K\) is tamely ramified and unramified outside \(T\). By the Riemann--Hurwitz formula,
\[
   \frac{2g_{E_n}-2}{[E_n:K]}
   =
   2g_K-2+
   \sum_{\substack{Q\in T\\ Q\ {\rm ramified}}}
   \left(1-\frac1{e_Q}\right)\deg Q
   \le
   2g_K-2+\sum_{Q\in T}\deg Q.
\]
Hence
\[
   \frac{|\Gal(E_n/K^H)|}{g_{E_n}-1}
   \ge
   \frac{2|H|}
   {\,2g_K-2+\displaystyle\sum_{Q\in T}\deg Q\,}.
\]
Since $g_{E_n}\to\infty$ and $\Gal(E_n/K^H)\le\Aut(E_n/\F_q)$ is solvable, the sequence
\[
   G_n:=\Gal(E_n/K^H)
\]
has all the asserted properties.  The final statement follows by taking $T=\varnothing$.
\end{proof}

\begin{remark}
Proposition~\ref{prop:restricted-lifting} may be viewed as a class field tower version of the Galois-closure construction in \cite[Section 2]{GarciaStichtenoth2007}. 
\end{remark}

\subsection{Tamely ramified towers over the rational function field}

We first give a uniform construction for $q\ge5$, and then treat the small constant fields $\F_q$ with  $q=2,3,4$.

\begin{theorem}\label{thm:tame-solvable-lower-general}
For every prime power $q\ge5$, there is a sequence of function fields $F_i/\F_q$ with $g_{F_i}\to\infty$ and finite solvable subgroups $G_i\le\Aut(F_i/\F_q)$ such that
\[
   |G_i|\ge \frac{2q(q-1)}{q-2}(g_{F_i}-1)
\]
for every $i$.
\end{theorem}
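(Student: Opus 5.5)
The plan is to apply Proposition~\ref{prop:restricted-lifting} to the rational function field $K=\F_q(x)$. I will use the affine group as the solvable group $H$, and the finite rational places as the ramification set $T$. The target constant forces the choice: with $|H|=q(q-1)$ and $g_K=0$, the right-hand side of Proposition~\ref{prop:restricted-lifting} equals
\[
   \frac{2q(q-1)}{-2+\sum_{Q\in T}\deg Q},
\]
so we need $\sum_{Q\in T}\deg Q=q$.

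The concrete choices are as follows.
\begin{enumerate}
\item Let $T=\{P_a: a\in\F_q\}$ be the $q$ zeros of $x-a$.
\item Let $S=\{P_\infty\}$ be the pole of $x$. It has degree $1$, so $\gcd(\deg P_\infty,\ell)=1$ for every $\ell$.
\item Let $H=\{x\mapsto ax+b: a\in\F_q^\times,\ b\in\F_q\}$. This group has order $q(q-1)$ and is solvable, being an extension of $\F_q^\times$ by $\F_q$.
\end{enumerate}
Each element of $H$ fixes $P_\infty$ and permutes the places $P_a$, so $\sigma(S)=S$ and $\sigma(T)=T$ for all $\sigma\in H$. Moreover $2g_K-2+\sum_{Q\in T}\deg Q=q-2>0$ because $q\ge5$.

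Next I choose $\ell$ to be any prime divisor of $q-1$. Such a prime exists since $q-1\ge4$, and $\ell\ne p$ automatically. Every $Q\in T$ has degree $1$, so $\ell\mid q^{\deg Q}-1$, and Lemma~\ref{lem:rational-ray-rank} applies with $\delta_\ell(q)=1$. It gives
\[
   d(G_T^S)\ge |T|-|S|+1-1=q-1\ge4.
\]
Since $T\ne\varnothing$ and $|S|=1$, Theorem~\ref{thm:tame-GS} then shows that $G_T^S$ is infinite. This is exactly where $q\ge5$ is needed, and it is the only potentially delicate step. One also has to check that the hypothesis $\ell\mid(q^{\deg Q}-1)$ is met; it is, because all places in $T$ are rational.

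With both hypotheses of Proposition~\ref{prop:restricted-lifting} verified, it yields function fields $E_n/\F_q$ with full constant field $\F_q$ and $g_{E_n}\to\infty$, together with solvable subgroups $G_n\le\Aut(E_n/\F_q)$. These satisfy
\[
   \frac{|G_n|}{g_{E_n}-1}\ge\frac{2q(q-1)}{q-2}.
\]
Setting $F_i:=E_i$ and $G_i:=G_i$ (with $G_i=\Gal(E_i/K^H)$) proves the theorem.
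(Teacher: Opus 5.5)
Your proposal is correct and follows the paper's proof essentially verbatim. It uses the same data: $K=\F_q(x)$, $S=\{P_\infty\}$, $T$ the $q$ finite rational places, $H$ the affine group of order $q(q-1)$, and a prime $\ell\mid q-1$. Lemma~\ref{lem:rational-ray-rank} gives $d(G_T^S)\ge q-1\ge 4$, Theorem~\ref{thm:tame-GS} gives that the tower is infinite, and Proposition~\ref{prop:restricted-lifting} then yields the constant $2q(q-1)/(q-2)$.
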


\begin{proof}
Let $K=\F_q(x)$. Then $g_K=0$. Let $P_\infty$ be the infinite place of $K$, let $S=\{P_\infty\}$, and let
\[
   H=\{x\mapsto ax+b:a\in\F_q^\times,\ b\in\F_q\}.
\]
Then $H$ is solvable of order $q(q-1)$ and fixes $P_\infty$. Let $T$ be the set of all finite rational places of $K$. Since $H$ acts on the finite rational places by affine transformations, we have $\sigma(T)=T$ and $\sigma(S)=S$ for every $\sigma\in H$. Moreover,
\[
   \sum_{Q\in T}\deg Q=q.
\]
Choose a prime $\ell\mid (q-1)$, and let $K_T^S/K$ be the maximal pro-$\ell$ extension in which $P_\infty$ splits completely and which is unramified outside $T$. Write $G_T^S=\Gal(K_T^S/K)$. Since $\ell\mid (q-1)$, Lemma~\ref{lem:rational-ray-rank} gives
\[
   d(G_T^S)\ge q-1\ge4.
\]
Thus $G_T^S$ is infinite by Theorem~\ref{thm:tame-GS}. Since $\deg P_\infty=1$, we have $\gcd(\deg P_\infty,\ell)=1$. Hence all  hypotheses of Proposition~\ref{prop:restricted-lifting} are satisfied, and that proposition gives the required sequence with
\[
   \frac{|G_i|}{g_{F_i}-1}
   \ge \frac{2q(q-1)}{q-2}.
\]
\end{proof}

Thus, for every fixed \(q\ge5\), there exist function fields of arbitrarily large genus admitting solvable automorphism groups whose orders grow at least linearly with the genus, with the above coefficient asymptotic to \(2q\) as \(q\to\infty\).

\begin{theorem}
\label{thm:tame-solvable-lower-small}
For each $q\in\{2,3,4\}$, there is a sequence of function fields $F_i/\F_q$ with $g_{F_i}\to\infty$ and  solvable subgroups $G_i\le\Aut(F_i/\F_q)$ such that
\[
   |G_i|\ge L_q(g_{F_i}-1),
\]
where
\[
   L_2=\frac6{11},\qquad
   L_3=\frac{24}{11}\quad \text{and }
   \quad
   L_4=\frac{20}{3}.
\]
\end{theorem}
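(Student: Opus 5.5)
Each case will be an application of Proposition~\ref{prop:restricted-lifting} with $K=\F_q(x)$, so $g_K=0$. Writing $D:=-2+\sum_{Q\in T}\deg Q$, it produces towers with
\[
|G_i|/(g_{F_i}-1)\ge 2|H|/D .
\]
So for each $q$ I have to choose four things: a solvable $H\le\PGL_2(\F_q)$; an $H$-invariant set $T$ with the right total degree; a prime $\ell\ne p$ with $\ell\mid q^{\deg Q}-1$ for all $Q\in T$; and an $H$-invariant set $S$, disjoint from $T$, containing a place of degree prime to $\ell$. Infiniteness of $G_T^S$ will then follow from Lemma~\ref{lem:rational-ray-rank} and Theorem~\ref{thm:tame-GS}, using $d(G_T^S)\ge |T|-|S|+1-\delta_\ell(q)$.

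\emph{Case $q=4$.} Take $H\cong D_{10}$, the normalizer of a cyclic subgroup $C_5$ of order $q+1$ in $\PGL_2(\F_4)\cong A_5$. Take $T$ to be the five rational places, which $\PGL_2(\F_4)$ permutes, so $D=3$.
\begin{itemize}
\item The nonsplit torus $C_5$ fixes exactly one place $P$ of degree $2$, namely the pair of conjugate $\F_{16}$-points it fixes. Its normalizer $D_{10}$ swaps these two points and hence also fixes $P$. Put $S=\{P\}$.
\item Take $\ell=3$. Then $\ell\mid q-1$, so $\delta_3(4)=1$, and $\gcd(2,3)=1$.
\item This gives $d\ge 5-1+1-1=4$. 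Since $T\ne\varnothing$ and $|S|=1$, Theorem~\ref{thm:tame-GS} shows $G_T^S$ is infinite.
\end{itemize}
The resulting ratio is $20/3$.

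\emph{Case $q=3$.} Take $H=\PGL_2(\F_3)\cong S_4$, of order $24$. Take $T$ to be all eight places of degree $3$, so $\sum\deg Q=24$ and $D=22$.
\begin{itemize}
\item Take $\ell=13$, which divides $3^3-1=26$ and does not divide $q-1$, so $\delta_{13}(3)=0$.
\item Take $S$ to be the three places of degree $2$. This set is $H$-invariant, disjoint from $T$, and $\gcd(2,13)=1$.
\item This gives $d\ge 8-3+1=6\ge 2+2\sqrt3$, so $G_T^S$ is infinite.
\end{itemize}
The resulting ratio is $48/22=24/11$. Using $\ell=2$, or $S$ equal to the four rational places, fails the Golod--Shafarevich threshold, which is why I use $\ell=13$ and degree-$2$ splitting.

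\emph{Case $q=2$.} Take $H=\PGL_2(\F_2)\cong S_3$. I need $D=22$, i.e.\ an $S_3$-invariant $T$ of total degree $24$.
\begin{itemize}
\item Take $\ell=3$, which divides $2^{\deg Q}-1$ exactly when $\deg Q$ is even.
\item Take $S$ to be the unique place of degree $2$, which is fixed by $H$ and has degree prime to $3$.
\item Build $T$ from places of degree $4$ and $6$, e.g.\ the three places of degree $4$ (total degree $12$) together with an $S_3$-orbit of two places of degree $6$. Failing that, use four places of degree $6$ forming a union of orbits.
\item Any such $T$ has at least four places, so $d\ge|T|-1+1-0\ge4$ and $G_T^S$ is infinite.
\end{itemize}
The resulting ratio is $12/22=6/11$.

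The main obstacle is this last combinatorial step. One must check, by computing the orbits of $S_3$ on the nine places of degree $6$ (equivalently, on $\F_{64}\setminus(\F_4\cup\F_8)$ modulo Frobenius), that an invariant set of the required total degree exists. The remaining hypotheses of Proposition~\ref{prop:restricted-lifting}, namely disjointness and the positivity $D>0$, are immediate in all three cases.
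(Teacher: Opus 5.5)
Your cases $q=3$ and $q=4$ are correct and agree with the paper. For $q=4$ the paper writes $H=\langle x\mapsto\omega/x,\ x\mapsto x+1\rangle\cong D_{10}$ and $S=\{P_2\}$ with $P_2$ the zero of $x^2+x+\omega$; this is your normalizer of a nonsplit $C_5$ together with its fixed degree-$2$ place.

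\textbf{The case $q=2$ fails at the step you left unchecked, and it cannot be repaired with $\ell=3$.} Take a place of degree $n\ge3$ over $\F_2$. No nontrivial element of $\PGL_2(\F_2)$ fixes one of its roots, because all fixed points lie in $\mathbb P^1(\F_4)$. Hence its stabilizer in $S_3$ embeds into $\Gal(\F_{2^n}/\F_2)$ and is cyclic, so every $S_3$-orbit of such places has size at least $2$. Now take $n=6$. An orbit of size $2$ would require $\rho\colon x\mapsto 1/(x+1)$ to satisfy $\rho(\alpha)=\alpha^4$ or $\rho^{-1}(\alpha)=\alpha^4$. These say $\alpha^5+\alpha^4+1=0$ or $\alpha^5+\alpha+1=0$. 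But $x^5+x^4+1=(x^2+x+1)(x^3+x+1)$ and $x^5+x+1=(x^2+x+1)(x^3+x^2+1)$, so neither has a root of degree $6$. On the other hand, $x\mapsto x+1$ fixes exactly one sextic place, namely the roots of $(x^8+x+1)/(x^2+x+1)=x^6+x^5+x^3+x^2+1$. Burnside's lemma then gives exactly two orbits on the nine sextic places, of sizes $3$ and $6$. So neither an orbit of two sextic places nor an invariant set of four sextic places exists.

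In fact no admissible $T$ exists at all for $\ell=3$. Every $Q\in T$ must have even degree $\ge4$, since the unique quadratic place lies in $S$. If $T$ contains the three quartic places (total degree $12$), the remaining degree $\le12$ would have to come from a $2$-element orbit of sextic places, which does not exist. Otherwise $|T|\ge4$ and $\sum_{Q\in T}\deg Q\le24$ force exactly four sextic places, which is also impossible. Hence Lemma~\ref{lem:rational-ray-rank} can never give $d(G_T^S)\ge4$ with $\sum_{Q\in T}\deg Q\le 24$.

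The missing idea is to change the prime. With $\ell=7$, where $2$ has order $3$ modulo $7$, places of degrees $3$ and $6$ become admissible. Take $T$ to be the two cubic places (an $S_3$-invariant pair) together with the $3$-element orbit of sextic places given by $x^6+x^3+1$, $x^6+x^4+x^3+x+1$ and $x^6+x^5+x^3+x^2+1$. Keep $S=\{P_2\}$, noting $\gcd(2,7)=1$. Then:
\begin{itemize}
\item $|T|=5$ and $\sum_{Q\in T}\deg Q=6+18=24$;
\item $\delta_7(2)=0$, so $d(G_T^S)\ge5$;
\item Proposition~\ref{prop:restricted-lifting} gives the ratio $12/22=6/11$.
\end{itemize}
This is exactly the paper's construction for $q=2$.
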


\begin{proof}
\noindent\emph{The case $q=2$.}
Let $K=\F_2(x)$. Then $g_K=0$ and
\[
H:=\Aut(K/\F_2)
=
\left\langle
\sigma_1:x\mapsto x+1,\;
\sigma_2:x\mapsto \frac1x
\right\rangle
\cong \PGL_2(\F_2)\cong S_3.
\]
In particular, $H$ is solvable and $|H|=6$. Let $P_2$ be the unique place of $K$ of degree $2$, corresponding to the irreducible polynomial $x^2+x+1$, and let $S:=\{P_2\}$. Since $P_2$ is the unique place of degree $2$, it is fixed by $H$. 

Let $T_3$ be the set of all places of degree $3$. Thus $\sigma(T_3)=T_3$ for every $\sigma\in H$. Let \(
T_6=\{P_{6,1},P_{6,2},P_{6,3}\},
\) where $P_{6,1},P_{6,2},P_{6,3}$ correspond respectively to the following irreducible polynomials:
\[
 f_{6,1}(x):=x^6+x^3+1,
 \quad
   f_{6,2}(x):=x^6+x^4+x^3+x+1,
   \quad
   f_{6,3}(x):=x^6+x^5+x^3+x^2+1.
\]
For $T_6$, one checks that
\[
   f_{6,1}(x+1)=f_{6,2}(x),\qquad
   f_{6,3}(x+1)=f_{6,3}(x),
\]
and
\[
   x^6f_{6,1}(1/x)=f_{6,1}(x),\qquad
   x^6f_{6,2}(1/x)=f_{6,3}(x).
\]
Thus $\sigma_1(P_{6,1})=P_{6,2}$ and $\sigma_1(P_{6,3})=P_{6,3}$, while $\sigma_2(P_{6,1})=P_{6,1}$ and $\sigma_2(P_{6,2})=P_{6,3}$. Since
$\sigma_1^2=\sigma_2^2=1$, we also have $\sigma_1(P_{6,2})=P_{6,1}$ and $\sigma_2(P_{6,3})=P_{6,2}$. Thus $\sigma(T_6)=T_6$ for every $\sigma\in H$. Let $T:=T_3\cup T_6$. Hence $\sigma(T)=T$ for every $\sigma\in H$.

Take $\ell=7$. Let $K_T^S/K$ be the maximal pro-$7$ extension in which $P_2$ splits completely and which is unramified outside $T$, and write $G_T^S=\Gal(K_T^S/K)$. Since $7\mid (2^3-1)$ and $7\mid (2^6-1)$, Lemma~\ref{lem:rational-ray-rank} gives
\[
   d(G_T^S)\ge |T_3|+|T_6|=2+3=5.
\]
Hence Theorem~\ref{thm:tame-GS} implies that $G_T^S$ is infinite. Moreover,
\[
   \sum_{Q\in T}\deg Q
   =2\cdot3+3\cdot6
   =24.
\]
Since $\deg P_2=2$ and $\gcd(2,7)=1$, all  hypotheses of Proposition~\ref{prop:restricted-lifting} are satisfied. It therefore gives the required sequence with
\[
   \frac{|G_i|}{g_{F_i}-1}\ge
   \frac{2\cdot6}{-2+24}=\frac6{11}.
\]

\medskip
\noindent\emph{The case $q=3$.}
Let $K=\F_3(x)$. Then $g_K=0$ and
\[
   H:=\Aut(K/\F_3)
   \cong \PGL_2(\F_3)
   \cong S_4.
\]
In particular, $H$ is solvable and $|H|=24$. Let $S$ be the set of all places of $K$ of degree $2$, and let $T$ be the set of all places of degree $3$. Since automorphisms of $K/\F_3$ preserve the degree of places, we have $\sigma(T)=T$ and $\sigma(S)=S$ for every $\sigma\in H$. Moreover,
\[
   |S|=\frac{3^2-3}{2}=3,
   \qquad
   |T|=\frac{3^3-3}{3}=8.
\]
Take $\ell=13$, and let $K_T^S/K$ be the maximal pro-$13$ extension in which every place of $S$ splits completely and which is unramified outside $T$. Write $G_T^S=\Gal(K_T^S/K)$. Since $13\mid(3^3-1)$, Lemma~\ref{lem:rational-ray-rank} gives
\[
   d(G_T^S)\ge |T|-(|S|-1)=8-2=6.
\]
Since $6>2+2\sqrt3$, Theorem~\ref{thm:tame-GS} implies that $G_T^S$ is infinite. Furthermore,
\[
   \sum_{Q\in T}\deg Q=8\cdot3=24.
\]
Every $P\in S$ has degree $2$, and $\gcd(2,13)=1$. Thus all hypotheses of Proposition~\ref{prop:restricted-lifting} are satisfied. It gives the required sequence with
\[
   \frac{|G_i|}{g_{F_i}-1}\ge
   \frac{2\cdot24}{-2+24}=\frac{24}{11}.
\]

\medskip
\noindent\emph{The case $q=4$.}
Let $K=\F_4(x)$, where $\F_4=\F_2(\omega)$ and $\omega^2+\omega+1=0$. Consider
\[
   H:=
   \left\langle
      \sigma:x\mapsto\frac{\omega}{x},
      \quad
      \tau:x\mapsto x+1
   \right\rangle.
\]
Since $\sigma^2=\tau^2=1$ and $\sigma\tau:x\mapsto\omega/(x+1)$ has order $5$, we have $H\cong D_{10}$. In particular, $H$ is solvable and $|H|=10$.

Let $P_2$ be the degree-two place corresponding to $f_2(x)=x^2+x+\omega$, and let $S:=\{P_2\}$. If $\alpha$ is a root of $f_2$, then $\alpha^2+\alpha=\omega$, and hence
\[
   \sigma(\alpha)=\frac{\omega}{\alpha}=\alpha+1,
   \qquad
   \tau(\alpha)=\alpha+1.
\]
Since the two roots of $f_2$ are $\alpha$ and $\alpha+1$, both $\sigma$ and $\tau$ preserve $P_2$. Thus $\sigma(S)=S$ for every $\sigma\in H$.

Let $T$ be the set of all rational places of $K$. Then $T$ is $H$-stable and
\[
   \sum_{Q\in T}\deg Q=5.
\]
Take $\ell=3$, and let $K_T^S/K$ be the maximal pro-$3$ extension in which $P_2$ splits completely and which is unramified outside $T$. Write $G_T^S=\Gal(K_T^S/K)$. Since every $Q\in T$ is rational and $3\mid(4-1)$, Lemma~\ref{lem:rational-ray-rank} gives
\[
   d(G_T^S)\ge |T|-1=4.
\]
Hence $G_T^S$ is infinite by Theorem~\ref{thm:tame-GS}. Since $\deg P_2=2$ and $\gcd(2,3)=1$, Proposition~\ref{prop:restricted-lifting} applies and gives the required sequence with
\[
   \frac{|G_i|}{g_{F_i}-1}\ge
   \frac{2\cdot10}{-2+5}=\frac{20}{3}.
\]
\end{proof}

\subsection{Unramified towers over Hermitian function fields}
Let $K/\F_q$ be a function field, let $P$ be a rational place of $K$, and let $\ell\ne p$ be a prime.

\begin{lemma}\label{lem:unramified-generator-rank}
Let $S=\{P\}$ and $T=\varnothing$, and let $J_K$ be the Jacobian of $K$. Then
\[
   d\bigl(G_{\varnothing}^{\{P\}}\bigr)
   =
   d_\ell(\operatorname{Cl}^0(K))
   =
   \dim_{\F_\ell}J_K(\F_q)[\ell].
\]
If $\mu_\ell\subset\F_q$ and \(
   \dim_{\F_\ell}J_K(\F_q)[\ell]\ge5,
\) then $G_{\varnothing}^{\{P\}}$ is infinite.
\end{lemma}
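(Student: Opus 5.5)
The plan is to compute the abelianization of $G_{\varnothing}^{\{P\}}$ through \eqref{eq:generator-ray-class-rank}, which gives $d(G_\varnothing^{\{P\}})=d_\ell\operatorname{Cl}_K(\{P\},\varnothing)=d_\ell\operatorname{Cl}_K(\{P\})$. The remaining work is to identify this class group with $\operatorname{Cl}^0(K)$ and then to apply Theorem~\ref{thm:tame-GS}.

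\emph{Step 1: the class group.} Here $\operatorname{Cl}_K(\{P\})$ is the ideal class group of the Dedekind ring $\mathcal O_K(\{P\})$. The nonzero primes of this ring correspond to the places $Q\ne P$, so its class group is
\[
\operatorname{Div}(K)\big/\bigl(\operatorname{Princ}(K)+\Z P\bigr).
\]
Because $\deg P=1$, the map $D\mapsto D-(\deg D)P$ induces an isomorphism of this quotient with $\operatorname{Cl}^0(K)$. Indeed, every divisor class is congruent modulo $\Z P$ to a unique class of degree zero. Consequently
\[
d(G_\varnothing^{\{P\}})=d_\ell(\operatorname{Cl}^0(K)).
\]

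\emph{Step 2: passing to the Jacobian.} The group $\operatorname{Cl}^0(K)$ is finite, so $d_\ell(\operatorname{Cl}^0(K))=\dim_{\F_\ell}\operatorname{Cl}^0(K)[\ell]$. Applying \eqref{eq:class-jacobian} identifies this with $\dim_{\F_\ell}J_K(\F_q)[\ell]$, which proves the displayed equalities.

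\emph{Step 3: infinitude.} Take $S=\{P\}$ and $T=\varnothing$ in Theorem~\ref{thm:tame-GS}. If $\mu_\ell\subset\F_q\subset K$, then $\theta_{K,T}=1$, and the criterion becomes
\[
d(G_\varnothing^{\{P\}})\ge 2+2\sqrt{2}\approx 4.83.
\]
Since $d$ is an integer, the hypothesis $\dim_{\F_\ell}J_K(\F_q)[\ell]\ge5$ suffices. If $\mu_\ell\not\subset K$, the threshold would be $4$, so $5$ works in every case. The statement assumes $\mu_\ell\subset\F_q$ only to fix the case used later.

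\emph{Main obstacle.} The only delicate point is the convention in Step 1, namely that the "class group of $\mathcal O_K(S)$" used in \eqref{eq:generator-ray-class-rank} is the $S$-class group in which the classes supported on $S$ are killed. It is the degree-one condition on $P$ that makes this class group equal to $\operatorname{Cl}^0(K)$ and not a quotient of it by a subgroup of index $\deg P$. I would verify this by comparing with the splitting condition. A place that splits completely corresponds, via Artin reciprocity, to killing its Frobenius class. The constant field extension is excluded because $\deg P=1$, and this is consistent with the quotient being finite and equal to $\operatorname{Cl}^0(K)$.
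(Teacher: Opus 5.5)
Your proposal is correct and follows essentially the same route as the paper: reduce via \eqref{eq:generator-ray-class-rank} to $d_\ell\operatorname{Cl}_K(\{P\})$, identify this with $\operatorname{Cl}^0(K)\cong J_K(\F_q)$ using $\deg P=1$, and apply Theorem~\ref{thm:tame-GS} with the threshold $2+2\sqrt2<5$. You additionally write out the isomorphism $D\mapsto D-(\deg D)P$, which the paper only calls ``natural,'' and you correctly note that the hypothesis $\mu_\ell\subset\F_q$ is not needed, since without it the threshold drops to $4$.
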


\begin{proof}
By \eqref{eq:generator-ray-class-rank}, \(
   d\bigl(G_{\varnothing}^{\{P\}}\bigr)
   =
   d_\ell(\operatorname{Cl}_K(\{P\})).
\) Since $P$ is rational, the class group of the ring of $\{P\}$-integers is naturally identified with the degree-zero divisor class group:
\begin{equation}\label{eq:P-class-degree-zero}
   \operatorname{Cl}_K(\{P\})
   \cong
   \operatorname{Cl}^0(K).
\end{equation}
Together with \eqref{eq:class-jacobian}, this gives \(
   \operatorname{Cl}_K(\{P\})
   \cong
   \operatorname{Cl}^0(K)
   \cong
   J_K(\F_q).
\) Since these groups are finite,  \(
   d_\ell (J_K(\F_q))
   =
   \dim_{\F_\ell}J_K(\F_q)[\ell].
\) Therefore
\[
   d\bigl(G_{\varnothing}^{\{P\}}\bigr)
   =
   d_\ell(\operatorname{Cl}^0(K))
   =
   \dim_{\F_\ell}J_K(\F_q)[\ell].
\]
Finally, suppose that $\mu_\ell\subset\F_q$. Then, by Theorem~\ref{thm:tame-GS},
\[
   \dim_{\F_\ell}J_K(\F_q)[\ell]
   \ge 5
   >
   2+2\sqrt{|S|+\theta_{K,\varnothing}}
   =
   2+2\sqrt2,
\]
so $G_{\varnothing}^{\{P\}}$ is infinite.
\end{proof}
Let $q=r^2$, where $r\ge3$ is a prime power, and consider the Hermitian function field
\[
   K_H=\F_q(x,y),\qquad y^r+y=x^{r+1}.
\]
Its genus is
\[
   g_{K_H}=\frac{r(r-1)}2.
\]
The Hermitian function field is $\F_q$-maximal \cite[Section~10.1]{HirschfeldKorchmarosTorres}. Hence Corollary~\ref{cor:extremal-class-group} gives
\[
   J_{K_H}(\F_q)
   \cong
   (\Z/(r+1)\Z)^{2g_H}.
\]
In particular, for every prime $\ell\mid (r+1)$,
\[
   \dim_{\F_\ell}J_{{K_H}}(\F_q)[\ell]=2g_{K_H}=r(r-1).
\]
Let $P_\infty$ be the unique place at infinity in the Hermitian function field. Its stabilizer $H_r$ in $\Aut(K_H/\F_q)\cong\PGU(3,r)$ has order
\[
   |H_r|=r^3(r^2-1).
\]
This fact can be found in \cite[Satz~5]{Stichtenoth1973AutomorphismII}.  Actually, \(H_r\) is solvable.Actually, \(H_r\) is solvable, since it is a semidirect product of a \(p\)-group by a cyclic group \cite[Proposition~3.8.5]{Stichtenoth2009}. 

\begin{theorem}
\label{thm:hermitian-solvable-lower}
Let $q=r^2$ with $r\ge3$. Then there is a sequence of function fields $F_i/\F_q$ with $g_{F_i}\to\infty$ and  solvable subgroups $G_i\le\Aut(F_i/\F_q)$ such that
\[
   |G_i|\ge\frac{2r^3(r-1)}{r-2}(g_{F_i}-1)
\]
for every $i$.
\end{theorem}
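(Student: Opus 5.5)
We apply Proposition~\ref{prop:restricted-lifting} with $K=K_H$, $T=\varnothing$, $S=\{P_\infty\}$, and $H=H_r$, the stabilizer of $P_\infty$. With these choices the bound of that proposition becomes
\[
   \frac{|H_r|}{g_{K_H}-1}
   =\frac{r^3(r^2-1)}{r(r-1)/2-1}
   =\frac{2r^3(r^2-1)}{(r-2)(r+1)}
   =\frac{2r^3(r-1)}{r-2},
\]
using $r^2-r-2=(r-2)(r+1)$. This is exactly the asserted constant. Note that $g_{K_H}=r(r-1)/2\ge3\ge2$ because $r\ge3$, so condition~(2) of the proposition holds with $2g_K-2>0$.

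The remaining hypotheses are checked as follows. $H_r$ is solvable, as recalled above. It fixes $P_\infty$ by definition, so $\sigma(S)=S$ holds, and $\sigma(T)=T$ is trivial since $T=\varnothing$. The place $P_\infty$ is rational, so $\gcd(\deg P_\infty,\ell)=1$ for every prime $\ell$. The only genuine input is therefore that $G_\varnothing^{\{P_\infty\}}$ is infinite for a suitable prime $\ell\ne p$.

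For this we choose a prime $\ell\mid (r+1)$. Since $p\mid r$, we have $\ell\ne p$. Moreover $r+1\mid r^2-1=q-1$, so $\ell\mid q-1$ and hence $\mu_\ell\subset\F_q$. By Corollary~\ref{cor:extremal-class-group} applied to the maximal function field $K_H$, we get $\dim_{\F_\ell}J_{K_H}(\F_q)[\ell]=2g_{K_H}=r(r-1)\ge6\ge5$ for $r\ge3$. Lemma~\ref{lem:unramified-generator-rank} then shows that $G_\varnothing^{\{P_\infty\}}$ is infinite.

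Proposition~\ref{prop:restricted-lifting} now yields fields $F_i=E_i$ with full constant field $\F_q$ and $g_{F_i}\to\infty$, together with solvable groups $G_i$ satisfying the required inequality. We expect no real obstacle here: the verification of infiniteness is the only substantive step, and it follows from the class group structure established in Corollary~\ref{cor:extremal-class-group} combined with the Golod--Shafarevich criterion.
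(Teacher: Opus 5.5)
Your proposal is correct and follows the paper's proof essentially step for step: choose a prime $\ell\mid(r+1)$, use Corollary~\ref{cor:extremal-class-group} to get that the $\ell$-rank of $\operatorname{Cl}^0(K_H)$ is $r(r-1)\ge 6$, deduce from Lemma~\ref{lem:unramified-generator-rank} that the $P_\infty$-split unramified pro-$\ell$ tower is infinite, and apply Proposition~\ref{prop:restricted-lifting} with $T=\varnothing$ and $H=H_r$. Your explicit checks fill in details the paper leaves implicit, namely that $\ell\ne p$, that $\ell\mid q-1$ gives $\mu_\ell\subset\F_q$, and that $g_{K_H}\ge 3$ makes the denominator in the proposition positive.
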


\begin{proof}
Choose a prime $\ell\mid (r+1)$. Then $\ell\ne p$ and $\mu_\ell\subset\F_q$.  The above result gives
\[
   \dim_{\F_\ell}\operatorname{Cl}^0(K_H)[\ell]
   =\dim_{\F_\ell}J_{K_H}(\F_q)[\ell]
   =2g_{K_H}=r(r-1)\ge6.
\]
Lemma~\ref{lem:unramified-generator-rank} shows that the maximal $P_\infty$-split unramified pro-$\ell$ extension of $K_H$ is infinite. Proposition~\ref{prop:restricted-lifting} applied with $T=\varnothing$ and $H=H_r$ gives a sequence with
\[
   \frac{|G_i|}{g_{F_i}-1}
   \ge\frac{r^3(r^2-1)}{r(r-1)/2-1}
   =\frac{2r^3(r-1)}{r-2}.
\]
\end{proof}
\begin{remark}
For square constant fields $\F_q$ for $q=r^2$ with $r\ge3$, the unramified tower of Hermitian construction gives solvable automorphism groups whose order grows linearly with the genus, with a coefficient of order \(q^{3/2}\) as \(q\to\infty\).
\end{remark}

\subsection{Recursive towers}
\label{sec:recursive-tower-solvable}

The preceding lower bounds are obtained from class field towers. To obtain more explicit asymptotic constructions, we also consider recursive towers with large solvable automorphism groups. We show that a suitable variant of the Garcia-Stichtenoth tower yields asymptotically large solvable automorphism groups.

Let \(q=p^e\), where \(p>2\), and define
\begin{equation}\label{eq:recursive-GS-descent}
   F_0=\F_q(x_0),\qquad
   F_{n+1}=F_n(x_{n+1}),\qquad
   x_{n+1}^q-x_{n+1}
   =\frac{x_n^q}{1-x_n^{q-1}}.
\end{equation}
After extending the constant field to \(\F_{q^2}\), this is the Garcia-Stichtenoth tower considered in \cite[Section~7.4]{Stichtenoth2009}. Let
\begin{equation}\label{eq:recursive-tu}
   t_0:=x_0^q-x_0,\qquad
   u_0:=t_0^{q-1}+1,\qquad
   K:=\F_q(u_0).
\end{equation}
By \cite[Lemma~7.4.14(a)]{Stichtenoth2009},
\[
   F_0/\F_q(t_0),\qquad
   \F_q(t_0)/K,\qquad
   F_0/K
\]
are Galois extensions of degrees \(q\), \(q-1\), and \(q(q-1)\), respectively.  For \(n\ge1\), let \(E_n\) be the Galois closure of \(F_n/K\).

\begin{proposition}
\label{prop:recursive-tower-solvable-lower}
Let \(q=p^e\) with \(p>2\).  There is a sequence of function fields \(E_n/\F_q\) with \(g_{E_n}\to\infty\), together with  solvable subgroups \(
   G_n\le\Aut(E_n/\F_q),
\) such that
\[
   |G_n|\ge(q-1)(g_{E_n}-1)
\]
for every \(n\ge1\).
\end{proposition}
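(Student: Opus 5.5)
The plan is to take $G_n:=\Gal(E_n/K)$ and obtain the bound from the Hurwitz formula \eqref{eq:normalized-Hurwitz} applied to $E_n/K$, with $K=\F_q(u_0)$ rational. We then need three ingredients: solvability of $\Gal(E_n/K)$, full constant field $\F_q$ together with $g_{E_n}\to\infty$, and an upper bound on the normalized different of $E_n/K$.

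\emph{Solvability.} Each step $F_{m+1}/F_m$ is Artin--Schreier with group $(\F_q,+)$ acting by $x_{m+1}\mapsto x_{m+1}+a$. The Galois closure of a tower of Galois $p$-steps over $F_0$ is a $p$-extension of $F_0$. Indeed, the Galois closure of $F_{m+1}/K$ is the compositum of the conjugates of $F_{m+1}$ over the Galois closure $E_m$ of $F_m$. Each conjugate is an Artin--Schreier (elementary abelian $p$) extension of $E_m$, so $\Gal(E_{m+1}/E_m)$ is an elementary abelian $p$-group. By induction $\Gal(E_n/F_0)$ is a $p$-group. Since $F_0/K$ is Galois with group of order $q(q-1)$ (an affine-type group $C_p^e\rtimes C_{q-1}$), the group $\Gal(E_n/K)$ is an extension of a solvable group by a $p$-group, hence solvable.

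\emph{Constant field and genus growth.} By \cite[Section~7.4]{Stichtenoth2009}, the place $x_0=\infty$, and more generally the places lying over the totally split locus, behave well in the tower. The simplest route is to show that some rational place of $F_0$ (e.g.\ a zero of $x_0-\alpha$ with $\alpha^{q}-\alpha\neq0$ suitably chosen, as in the splitting locus of the Garcia--Stichtenoth tower) splits completely in every $F_n/F_0$. Complete splitting then passes to the Galois closure $E_n$, since it is preserved under conjugates and composita. This gives full constant field $\F_q$ by the degree argument in Proposition~\ref{prop:restricted-lifting}, and $[E_n:K]\ge[F_n:K]\to\infty$ forces $g_{E_n}\to\infty$ via Hasse--Weil. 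If no rational place splits over $\F_q$, I would instead use that $F_n$ has full constant field $\F_q$ and that constants of $E_n$ are generated inside conjugate composita. The splitting route is my first choice, however.

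\emph{Different bound: the main obstacle.} Ramification in $F_n/K$ occurs only above $u_0=0$, $u_0=1$ and $u_0=\infty$, by the computation in \cite[Lemma~7.4.14]{Stichtenoth2009}. I would show the normalized Hurwitz sum for $E_n/K$ satisfies
\[
\frac{2g_{E_n}-2}{[E_n:K]}\le -2+\Bigl(1-\tfrac1{q-1}\Bigr)+\delta,
\]
with the total at most $2/(q-1)$. Then $|G_n|=[E_n:K]\ge (q-1)(g_{E_n}-1)$. Concretely, the place $u_0=\infty$ is tame with index $q-1$ (contributing $1-\frac1{q-1}$). Over $u_0=1$ (i.e.\ $t_0=0$) the ramification is wild, but, following Stichtenoth's analysis of the GS tower over $\F_{q^2}$, the different exponents at each wild place satisfy $d(P'|P)=2(e(P'|P)-1)$. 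This yields a normalized contribution of at most $2$ times the degree-one place, with $u_0=0$ unramified or tame. The key technical point is that this "$d=2(e-1)$" property, proved for places of $F_n$, survives in the Galois closure. The reason is that $E_n$ is a compositum of conjugates of $F_n$, and composita of extensions with this ramification type (all higher ramification groups $I_2$ trivial) retain it, via Abhyankar-type and upper-numbering compatibility of jumps. Establishing this carefully, and checking that the constants combine to exactly $2/(q-1)$, is where I expect the real work. I would model it on the Galois-closure argument of \cite[Section~2]{GarciaStichtenoth2007}.
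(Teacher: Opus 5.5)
You have the right field $E_n$ and the right group, and your solvability argument matches the paper's. The gap is the step you flag yourself: the bound on the different of $E_n/K$ is never established, and the bookkeeping you sketch would not give it.

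\emph{The ramification data are misidentified.}
\begin{itemize}
\item The place $u_0=\infty$ is not tame. The place $x_0=\infty$ lies over it with $e=q(q-1)$ and $d=q(q-2)+2(q-1)=q^2-2$ already in $F_0/K$.
\item The place $u_0=0$ is unramified.
\item The relation $d=2(e-1)$ holds only for the $p$-part, i.e.\ for $E_n$ over $\F_q(t_0)$, not over $K$. Over $u_0=1$ one has $e=(q-1)e_w$ and $d=qe_w-2$, where $e_w$ is the index over $\F_q(t_0)$.
\end{itemize}
With your numbers the normalized sum is bounded only by $-2+(1-\frac{1}{q-1})+2=\frac{q-2}{q-1}$. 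This exceeds $\frac{2}{q-1}$ for every $q\ge5$, so the inequality does not follow.

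\emph{What is actually needed.} You must show that $E_n/\F_q(t_0)$ is a Galois $p$-extension of degree $m_n$, ramified only over $t_0\in\{0,\infty\}$, with $I_2=1$ at every ramified place. Then $2g_{E_n}-2=2m_n(1-e_0^{-1}-e_\infty^{-1})<2m_n$, while $[E_n:K]=(q-1)m_n$. Equivalently, over $K$ each of $u_0=1$ and $u_0=\infty$ contributes $\frac{q}{q-1}-\frac{2}{(q-1)e_w}$, not $1-\frac1{q-1}$ and $2$.

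\emph{Preservation of $I_2=1$ needs an induction.} ``Composita retain $I_2=1$'' is justified by upper numbering only for composita of extensions Galois over a common base. The $K$-conjugates of $F_n$ are not Galois over $F_0$. You need an induction along $E_i\subset E_{i+1}$:
\begin{itemize}
\item $E_i/\tau(F_i)$ and $\tau(F_{i+1})/\tau(F_i)$ are both Galois and weakly ramified, so their compositum is weakly ramified.
\item Hence $E_i\tau(F_{i+1})/E_i$ is weakly ramified, since lower numbering restricts to subgroups.
\item Hence $E_{i+1}/E_i$ is weakly ramified, again by upper numbering over the common base $E_i$.
\item Transitivity of the different then gives $d=2(e-1)$ in $E_{i+1}/\F_q(t_0)$.
\end{itemize}
The paper avoids all of this. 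It cites Stichtenoth's Theorem~7.4.15 and Corollary~7.4.16 for the Galois closure $G_n^*$ over $\F_{q^2}$, proves $G_n^*=E_n\F_{q^2}$, and descends degree and genus.

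\emph{Your constant-field step fails as stated.} No $\F_q$-rational place of $F_0$ splits completely in the tower. Every such place eventually ramifies: $x_0=\infty$ and $x_0\in\F_q^\times$ already in $F_1/F_0$. Your own condition $\alpha^q\ne\alpha$ excludes rational places. The right places are the degree-two places $x_0=\beta$ with $\beta\in\F_{q^2}\setminus\F_q$, which lie over $u_0=0$.
\begin{itemize}
\item The value $\gamma=\beta^q/(1-\beta^{q-1})$ satisfies $\gamma^q=-\gamma\ne0$. So $T^q-T=\gamma$ has $q$ roots in $\F_{q^2}\setminus\F_q$.
\item By induction these places split completely in $F_n/F_0$ with residue field $\F_{q^2}$.
\item They exhaust the fibre over $u_0=0$, so $u_0=0$ behaves this way in every $K$-conjugate of $F_n$. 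Hence $E_n$ has places of degree $2$.
\end{itemize}
The constant-field degree divides both $2$ and the $p$-power $[E_n:F_0]$, so oddness of $p$ forces it to be $1$. This is the same use of $p>2$ as in the paper, which argues instead that the constants of $E_n\subseteq G_n^*$ lie in $\F_{q^2}$.
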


\begin{proof}
Let $G_n^*$ be the Galois closure of $F_n\F_{q^2}/K\F_{q^2}$.  This is the Galois closure tower considered in \cite[Theorem~7.4.15]{Stichtenoth2009}.  Write
\[
   m_n:=[G_n^*:\F_{q^2}(t_0)].
\]
By \cite[Theorem~7.4.15 and
Corollary~7.4.16(a)]{Stichtenoth2009}, $\F_{q^2}$ is the full constant field of $G_n^*$, $m_n$ is a power of $p$, and
\begin{equation}\label{eq:GS-closure-data}
   [G_n^*: K\F_{q^2}]=(q-1)m_n,
   \qquad
   g(G_n^*)-1
   <
   m_n
\end{equation}

Assume that \(E_n\) is the Galois closure of \(F_n/K\).  The Galois closure of \(
   F_n\F_{q^2}/K\F_{q^2}
\) is  \(
   \prod_{\tau}
   \tau\!\left(F_n\F_{q^2}\right),
\) where \(\tau\) runs through the \(K\F_{q^2}\)-embeddings of \(F_n\F_{q^2}\) into a fixed algebraic closure of \(K\F_{q^2}\). Since \(\F_q\) is the full constant field of \(F_n\), we have \(
   F_n\cap K\F_{q^2}=K.
\) Thus restriction induces a bijection between the \(K\F_{q^2}\)-embeddings of \(F_n\F_{q^2}\) and the \(K\)-embeddings of \(F_n\), and
\begin{equation}\label{eq:constant-extension-closure}
  G_n^*
  = \prod_{\tau}
    \tau\!\left(F_n\F_{q^2}\right)
  =
  \left(\prod_{\sigma}\sigma(F_n)\right)\F_{q^2}
  =
  E_n\F_{q^2}.
\end{equation}
where \(\sigma\) runs through the \(K\)-embeddings of \(F_n\) into a fixed algebraic closure of \(K\).

We next determine the constant field of $E_n$.  Since $E_n\subseteq G_n^*$, its full constant field is either $\F_q$ or $\F_{q^2}$.  We claim by induction that $E_i/F_0$ is a Galois extension of $p$-power degree for every $i$. Since $F_0/K$ is Galois, we have $E_0=F_0$.  Assume that $E_i/F_0$ is a Galois extension of $p$-power degree.  Now $E_i\subseteq E_{i+1}$, since $E_{i+1}/K$ is Galois and contains $F_i$.  Moreover, $E_i/K$ is Galois.  Hence, for every \(
   \tau\in\Gal(E_{i+1}/K),
\) we have \(
   \tau(E_i)=E_i
\) and therefore \(
   \tau(F_i)\subseteq E_i.
\) Since \(
   \tau(F_{i+1})/\tau(F_i)
\) is again a Galois extension of $p$-power degree, its base change to $E_i$ gives a Galois extension of $p$-power degree
\[
   E_i\tau(F_{i+1})/E_i.
\]
By the definition of the Galois closure, $E_{i+1}$ is the compositum of all $K$-conjugates of $F_{i+1}$.  Since $E_i\subseteq E_{i+1}$,we may write
\[
   E_{i+1}
   =
   \prod_{\tau\in\Gal(E_{i+1}/K)}
   E_i\tau(F_{i+1}).
\]
Thus $E_{i+1}/E_i$ is a Galois extension of $p$-power degree. Moreover, since $E_{i+1}/K$ is Galois and \(
   K\subseteq F_0\subseteq E_{i+1},
\) the extension $E_{i+1}/F_0$ is Galois.  By the induction hypothesis, \(
   [E_{i+1}:F_0]
   =
   [E_{i+1}:E_i][E_i:F_0]
\) is a power of $p$.  This completes the induction.  If $\F_{q^2}\subseteq E_n$, then \(
   F_0\F_{q^2}\subseteq E_n.
\) Since $F_0$ has full constant field $\F_q$,
\(
 [F_0\F_{q^2}:F_0]=2\text{ divides } [E_n:F_0]=p^{s_n},
\) which is impossible since $p>2$.  Thus the full constant field of $E_n$ is $\F_q$.

It follows from \eqref{eq:constant-extension-closure} that extension of constants from $\F_q$ to $\F_{q^2}$ preserves the relevant degree and genus.  Hence \eqref{eq:GS-closure-data} gives
\begin{equation}\label{eq:descended-degree}
   [E_n:K]=(q-1)m_n
\end{equation}
and
\begin{equation}\label{eq:descended-genus}
   g_{E_n}-1
   <m_n.
\end{equation}
Since $E_n/F_0$ is a Galois $p$-extension and $F_0/K$ is Galois with solvable Galois group, the exact sequence
\[
   1\longrightarrow
   \Gal(E_n/F_0)
   \longrightarrow
   \Gal(E_n/K)
   \longrightarrow
   \Gal(F_0/K)
   \longrightarrow 1
\]
shows that $\Gal(E_n/K)$ is solvable.  Since $\F_q$ is the full constant field of $E_n$, \(
   \Gal(E_n/K)\le\Aut(E_n/\F_q).
\) Moreover, by \eqref{eq:descended-degree} and \eqref{eq:descended-genus},
\[
   |\Gal(E_n/K)|
   =(q-1)m_n
   >(q-1)(g_{E_n}-1).
\]
Since $g_{F_n}\to\infty$ and $F_n\subseteq E_n$, it follows that $g_{E_n}\to\infty$. This completes the proof.
\end{proof}

\section{Further results}\label{sec:further-applications}
Throughout this section, we write $q=p^e$, where $p=\charac(\F_q)$. A finite group is called a $p'$-group if its order is prime to $p$.  

The methods of the preceding two sections yield several further  results. First, the  construction of Proposition~\ref{prop:restricted-lifting} can be used to improve the lower bounds for $p'$-subgroup automorphism groups. Second, the upper bound  for solvable groups extends naturally to  $p$-solvable groups.  

We first discuss an application of Proposition~\ref{prop:restricted-lifting}  to \(p'\)-subgroups of \(\Aut(F/\F_q)\).

\subsection{Prime-to-p subgroups}
\label{subsec:Bq-application}
 For a function field $F/\F_q$ of genus at least two, let
\[
   b_F:=\max\bigl\{|A|: \text{A is a $p'$-subgroup of $\Aut(F/\F_q)$ }\},
\]
and define \(
   B_q:=\limsup_{g_F\to\infty}\frac{b_F}{g_F},
\) where $F$ runs through function fields with full constant field $\F_q$. Ma and Xing prove that $B_q\ge2/3$ for every $q$ \cite[Section~4]{MaXing2019}. However, it is not clear whether the constant
fields in their tower are all equal to $\F_q$, see Remark~\ref{rem:Bq-characteristic-two} for details.

We next apply Proposition~ \ref{prop:restricted-lifting} to improve the lower bounds
for $B_q$.  We begin with the following direct consequence of Proposition~\ref{prop:restricted-lifting}.

\begin{corollary}
\label{cor:pprime-lifting}
Assume the hypotheses of Proposition~\ref{prop:restricted-lifting}.  Suppose in addition that
\[
   \ell\ne p
   \qquad\text{and}\qquad
  \text{$H$ is a $p'$-group}.
\]
Then there is a sequence of function fields $E_n/\F_q$ with $g_{E_n}\to\infty$, together with $p'$-subgroups \(
   A_n\le \Aut(E_n/\F_q)
\) , such that
\[
   \frac{|A_n|}{g_{E_n}-1}
   \ge
   \frac{2|H|}
   {\,2g_K-2+\displaystyle\sum_{Q\in T}\deg Q\,}
\]
for every $n$.
\end{corollary}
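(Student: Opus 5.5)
The plan is to rerun the proof of Proposition~\ref{prop:restricted-lifting} verbatim, taking $A_n:=G_n=\Gal(E_n/K^H)$ as before. The only new point is that this group is a $p'$-group. Everything else carries over unchanged from the proposition and uses only hypotheses~(1) and~(2), not solvability: the field $E_n$ is the Galois closure of a finite subextension $L_n$ of $K_T^S/K$ over $K^H$, inside $K_T^S$; it has full constant field $\F_q$; its genus $g_{E_n}$ tends to infinity; and the Riemann--Hurwitz estimate for the tame extension $E_n/K$ still applies.

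The argument proceeds in three steps.

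First, as in the proposition, $H$-stability of $S$ and $T$ together with the maximality of $K_T^S$ shows that every extension $\widetilde\sigma$ of $\sigma\in H$ maps $K_T^S$ onto itself. Hence $K_T^S/K^H$ is Galois, and so is each finite layer $E_n/K^H$.

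Second, restriction to $K$ gives the exact sequence
\[
1\longrightarrow\Gal(E_n/K)\longrightarrow\Gal(E_n/K^H)\longrightarrow H\longrightarrow1 .
\]
Since $E_n\subseteq K_T^S$ and $K_T^S/K$ is pro-$\ell$, the kernel $\Gal(E_n/K)$ is an $\ell$-group. Because $\ell\ne p$, its order is prime to $p$. Then
\[
|A_n|=|H|\,[E_n:K]
\]
is a product of two integers prime to $p$, since $H$ is a $p'$-group by assumption. So $A_n$ is a $p'$-group. As $\F_q$ is the full constant field of $E_n$, we also have $A_n\le\Aut(E_n/\F_q)$.

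Third, the inequality
\[
\frac{|A_n|}{g_{E_n}-1}\ge\frac{2|H|}{2g_K-2+\sum_{Q\in T}\deg Q}
\]
is exactly the final display in the proof of Proposition~\ref{prop:restricted-lifting}. It comes from
\[
\frac{2g_{E_n}-2}{[E_n:K]}\le 2g_K-2+\sum_{Q\in T}\deg Q,
\]
which holds because $E_n/K$ is tame and unramified outside $T$. This bound does not involve any group-theoretic property of $H$.

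There is essentially no obstacle here. The one point to check is that the proof of the proposition never used solvability of $H$ except to conclude that $G_n$ is solvable. It did not: the Galois-closure, constant-field and genus arguments use only hypotheses~(1) and~(2). The condition that $H$ be solvable can therefore be replaced by the condition that $H$ be a $p'$-group, and the rest of the argument is unchanged.
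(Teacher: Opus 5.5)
Your proposal is correct and matches the paper's proof essentially verbatim: take $A_n=\Gal(E_n/K^H)$ from Proposition~\ref{prop:restricted-lifting}, and note that $|A_n|=|H|\,[E_n:K]$ is prime to $p$ because $[E_n:K]$ is a power of $\ell\ne p$ and $H$ is a $p'$-group; all other assertions are inherited directly from the proposition. Your side remark that solvability of $H$ is used in that proof only to conclude that $G_n$ is solvable is also accurate, although the corollary as stated keeps the solvability hypothesis anyway.
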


\begin{proof}
By Proposition~\ref{prop:restricted-lifting}, there are finite Galois extensions $E_n/K^H$ and exact sequences
\[
   1\longrightarrow \Gal(E_n/K)
   \longrightarrow \Gal(E_n/K^H)
   \longrightarrow H
   \longrightarrow 1,
\]
where $\Gal(E_n/K)$ is an $\ell$-group.  Set \(
   A_n:=\Gal(E_n/K^H).
\) Since $\ell\ne p$ and $p\nmid |H|$, we have $p\nmid |A_n|$.  Moreover, $A_n$ is solvable, since its normal subgroup $\Gal(E_n/K)$ is an $\ell$-group and its quotient is $H$.  The remaining assertions, including $g_{E_n}\to\infty$ and
\[
   \frac{|A_n|}{g_{E_n}-1}
   \ge
   \frac{2|H|}
   {\,2g_K-2+\displaystyle\sum_{Q\in T}\deg Q\,},
\]
follow directly from Proposition~\ref{prop:restricted-lifting}.
\end{proof}
We use tamely ramified towers over the rational function field that were established in the preceding section,  and replace the solvable group by a suitable $p'$ subgroup.

\begin{theorem}\label{thm:Bq-lower}
For every $q\ge5$,
\[
   B_q\ge\frac{2(q-1)}{q-2}.
\]
Moreover,  \(  B_2\ge\frac{3}{11},
   B_3\ge\frac{8}{11}\text{ and }
   B_4\ge\frac{10}{3}.
\)

\end{theorem}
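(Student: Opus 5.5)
The plan is to reuse the four constructions from Theorems~\ref{thm:tame-solvable-lower-general} and~\ref{thm:tame-solvable-lower-small}, with the same $K$, $S$, $T$ and $\ell$, and change only the group $H$. We replace $H$ by a $p'$-subgroup $H'\le H$ that still stabilizes $S$ and $T$, then apply Corollary~\ref{cor:pprime-lifting}. Hypothesis~(1) of Proposition~\ref{prop:restricted-lifting} concerns only $K,S,T,\ell$, so it was already verified in those proofs. Hypothesis~(2) holds for any subgroup of $H$, because the denominator $2g_K-2+\sum_{Q\in T}\deg Q$ is unchanged. Corollary~\ref{cor:pprime-lifting} then gives $p'$-groups $A_n$ with
\[
   \frac{|A_n|}{g_{E_n}-1}\ge\frac{2|H'|}{2g_K-2+\sum_{Q\in T}\deg Q}.
\]
Since $g_{E_n}\to\infty$ and $g_{E_n}-1<g_{E_n}$, we get $b_{E_n}/g_{E_n}\ge |A_n|/g_{E_n}\cdot$ (a ratio tending to $1$). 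Hence $B_q$ is at least the right-hand side.

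For $q\ge5$ we take $H'=\{x\mapsto ax: a\in\F_q^\times\}$, which is cyclic of order $q-1$ and prime to $p$. It fixes $P_\infty$ and permutes the finite rational places, so it preserves $S$ and $T$. The bound becomes $2(q-1)/(q-2)$, exactly half the solvable constant.

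The small fields need slightly more care.
\begin{itemize}
\item For $q=2$, take $H'=\langle\sigma_1\sigma_2\rangle\cong C_3$, the $2'$-part of $S_3$. It preserves $S=\{P_2\}$ because $P_2$ is the unique place of degree $2$, it preserves $T_3$ because that is the set of all places of degree $3$, and it preserves $T_6$ because all of $H$ does. This gives $2\cdot3/22=3/11$.
\item For $q=3$, take $H'\le S_4\cong\PGL_2(\F_3)$ of order $8$. But $p=3$, so a $3'$-subgroup can have order at most $8$, namely a Sylow $2$-subgroup $D_8$. Since $S$ and $T$ are defined by degree, every subgroup preserves them. This gives $2\cdot8/22=8/11$.
\item For $q=4$, $H=D_{10}$ has order $10$, which is even, so it is not a $2'$-group. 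The target $10/3=2\cdot5/3$ tells us to use the cyclic subgroup $\langle\sigma\tau\rangle\cong C_5$. It preserves $S$ and $T$ because $H$ does.
\end{itemize}

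There is no real obstacle here. The one point to check is that every chosen $H'$ has order prime to $p$ while still lying in the original stabilizing group $H$, which is what makes hypothesis~(2) automatic. We should also confirm that $\ell\neq p$ in each case ($\ell=7,13,3$ against $p=2,3,2$), as Corollary~\ref{cor:pprime-lifting} requires.
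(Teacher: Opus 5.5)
Your proposal is correct and follows the paper's proof exactly: it keeps the same $K,S,T,\ell$ from Theorems~\ref{thm:tame-solvable-lower-general} and~\ref{thm:tame-solvable-lower-small} and replaces $H$ by the same $p'$-subgroups (the multiplicative group of order $q-1$, then $C_3\le S_3$, a Sylow $2$-subgroup of order $8$ in $S_4$, and $C_5\le D_{10}$). It then applies Corollary~\ref{cor:pprime-lifting} and passes from $g_{E_n}-1$ to $g_{E_n}$ in the limit.
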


\begin{proof}
Let $q\ge5$.  Retain the function field $K=\F_q(x)$, the sets $S,T$, and the infinite pro-$\ell$ tower from the proof of Theorem~\ref{thm:tame-solvable-lower-general}.  Thus \(
   S=\{P_\infty\}\text{ and }
   \sum_{Q\in T}\deg Q=q.
\) Replace the affine group used there by
\[
   H_0=\{x\mapsto ax:a\in\F_q^\times\}.
\]
Then $|H_0|=q-1$, the group $H_0$ preserves both $S$ and $T$, and $p\nmid |H_0|$.  Hence Corollary~\ref{cor:pprime-lifting} gives
\[
   \frac{|A_n|}{g_{E_n}-1}
   \ge
   \frac{2(q-1)}{-2+q}
   =
   \frac{2(q-1)}{q-2}.
\]
For $q=2$, retain the construction from the $q=2$ case in the proof of Theorem~\ref{thm:tame-solvable-lower-small}.  Thus $\ell=7$, \(
   \sum_{Q\in T}\deg Q=24,
\) and the corresponding pro-$7$ group $G_T^S$ is infinite.  The sets $S$ and $T$ are stable under \(
   \PGL_2(\F_2)\cong S_3,
\) and hence under its cyclic subgroup $H_0\cong C_3$.  Since $p\nmid |H_0|$,  Corollary~\ref{cor:pprime-lifting} gives
\[
   \frac{|A_n|}{g_{E_n}-1}
   \ge
   \frac{2\cdot3}{-2+24}
   =
   \frac3{11}.
\]
For $q=3$, retain the construction from the $q=3$ case in the proof of Theorem~ \ref{thm:tame-solvable-lower-small}.  Thus $\ell=13$, \(
   \sum_{Q\in T}\deg Q=24,
\) and $G_T^S$ is infinite.  Since the sets $S$ and $T$ are stable under \(
   \PGL_2(\F_3)\cong S_4,
\) they are also stable under a Sylow $2$-subgroup $H_0\le S_4$ of order $8$.  Therefore Corollary~\ref{cor:pprime-lifting} yields
\[
   \frac{|A_n|}{g_{E_n}-1}
   \ge
   \frac{2\cdot8}{-2+24}
   =
   \frac8{11}.
\]
For $q=4$, retain the pro-$3$ tower and the group $D_{10}$ from the $q=4$ case in the proof of
Theorem~ \ref{thm:tame-solvable-lower-small}.  Its cyclic subgroup $H_0\cong C_5$ preserves the prescribed splitting and ramification sets. Since \(
   \sum_{Q\in T}\deg Q=5,
\) Corollary~\ref{cor:pprime-lifting} gives
\[
   \frac{|A_n|}{g_{E_n}-1}
   \ge
   \frac{2\cdot5}{-2+5}
   =
   \frac{10}{3}.
\]
In each case $b_{E_n}\ge |A_n|$.  Since $g_{E_n}\to\infty$, replacing $g_{E_n}-1$ by $g_{E_n}$ does not change the limiting lower bound. The assertions follow.
\end{proof}

\begin{remark}
\label{rem:Bq-characteristic-two}
For the cubic tower in \cite[Theorem~4.5]{MaXing2019}, when $q=2^{2m+1}$, the Galois closure may involve the quadratic constant extension $\F_{q^2}$.  Thus the corresponding Galois group need not have order prime to $p=2$, and some additional care is needed in this case. On the other hand, for every $q\ge3$, the construction above already gives a lower bound strictly larger than $2/3$.
\end{remark}

\subsection{\texorpdfstring{$p$}{p}-solvable automorphism groups}
\label{subsec:p-solvable}
We first recall some standard facts about finite groups.

Every chief factor \(A\) of a finite group \(G\) is of the form \(
   A\cong S^m
\) for some finite simple group \(S\) and some integer \(m\ge1\) \cite[Theorem~3.3.15]{Robinson1996}.

A finite group is $p$-solvable, if each of its chief factors is either an elementary abelian $p$-group or a $p'$-group \cite[Section~9.3]{Robinson1996}. The latter group may be either an elementary abelian \(\ell\)-group \(C_\ell^{\,m}\) for some prime \(\ell\neq p\), or a direct product \(S^m\) of isomorphic non-abelian finite simple groups, where \(p\nmid |S|\). In particular, every solvable group is $p$-solvable.

We begin with the rational function field and determine the maximal possible order of a \(p\)-solvable subgroup of its \(\F_q\)-automorphism group \(
\Aut\bigl(\F_q(x)/\F_q\bigr)\cong \PGL_2(\F_q).
\)
\begin{proposition}
\label{prop:PGL2-psolvable}
Let $K=\F_q(x)$, where $q=p^e$, and let $H\le\Aut(K/\F_q)$ be a  $p$-solvable subgroup.  Then
\[
   |H|\le R_q,
\]
where \(R_q\) is given by \Cref{eq:Rq-definition}.
\end{proposition}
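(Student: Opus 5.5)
The plan is to reduce to Proposition~\ref{prop:PGL2-solvable} by showing that a $p$-solvable subgroup $H\le\PGL_2(\F_q)$ which is not solvable must be very restricted, and then to check the few exceptional cases directly.

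First, suppose $H$ is not solvable. Then some chief factor of $H$ is $S^m$ with $S$ non-abelian simple and $p\nmid|S|$. We invoke the subgroup list \cite[Appendix~A, Theorem~A.8]{HirschfeldKorchmarosTorres}. The only non-solvable subgroups of $\PGL_2(\F_q)$ are of two kinds:
\begin{itemize}
\item subgroups containing $A_5$, namely $A_5$ itself or $A_5$-type groups, which occur when $p=5$ or $q^2\equiv1\pmod 5$;
\item subfield subgroups $\PSL_2(\F_{p^f})$ or $\PGL_2(\F_{p^f})$ with $p^f\ge4$.
\end{itemize}
In the subfield case, the composition factor $\PSL_2(\F_{p^f})$ is simple of order divisible by $p$. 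This contradicts $p$-solvability, because the only non-abelian chief factor is $\PSL_2(\F_{p^f})$ itself, and it is neither a $p$-group nor a $p'$-group. So a non-solvable $p$-solvable $H$ must be isomorphic to $A_5$, and this requires $p\ne2,3,5$, since $|A_5|=60$ is divisible by $2,3,5$. Equivalently, we need $p\ge7$ with $5\mid(q^2-1)$.

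In that case $|H|=60$, and we compare this with $R_q$. Since $p\ge7$, we have $q\ge7$ and so $R_q=q(q-1)\ge42$. For $q=7$, however, $R_q=42<60$, so the claim would fail if $A_5\le\PGL_2(\F_7)$. But $A_5\le\PGL_2(\F_q)$ requires $q^2\equiv1\pmod5$ (or $p=5$), and $49\equiv4\pmod5$, so $A_5$ does not embed in $\PGL_2(\F_7)$. The next possibility is $q\ge11$, where $q(q-1)\ge110>60$. More generally, every prime power $q$ with $p\ge7$ and $q^2\equiv1\pmod5$ satisfies $q\ge11$: the first few prime powers with $p\ge7$ are $7,11,13,\dots$, and $7$ is excluded. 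Hence $|H|\le R_q$ in the non-solvable case.

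If $H$ is solvable, Proposition~\ref{prop:PGL2-solvable} gives $|H|\le R_q$ directly.

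The main obstacle is extracting from Theorem~A.8 the precise conditions under which $A_5$ and the subfield subgroups occur, and confirming that no other non-solvable group appears (for instance, extensions of $A_5$ do not occur inside $\PGL_2$ since $S_5\not\le\PGL_2(\F_q)$ for $p\ge7$). Small $q$ also need separate checks, but $q\le5$ has $p\in\{2,3,5\}$, where $A_5$ is excluded, and everything there is then handled by the solvable case.
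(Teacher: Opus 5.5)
Your proposal is correct and follows essentially the same route as the paper: solvable $H$ is handled by Proposition~\ref{prop:PGL2-solvable}, and the list in Theorem~A.8 leaves only two kinds of non-solvable candidates. The subfield subgroups $\PSL_2(\F_{p^f})$, $\PGL_2(\F_{p^f})$ with $p^f\ge4$ are excluded because the simple factor $\PSL_2(\F_{p^f})$ has order divisible by $p$ without being a $p$-group, while $A_5$ forces $p\ge7$ and $5\mid(q^2-1)$, hence $q\ge11$ and $60<q(q-1)=R_q$ (your remark that $q=7$ is the only relevant prime power below $11$ is slightly cleaner than the paper's mention of $q=7,8$).
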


\begin{proof}
We first recall the classical fact that every subgroup of a \(p\)-solvable group is again \(p\)-solvable. Indeed, if \(H\le G\), then intersecting a chief series of \(G\) with \(H\) and refining gives a chief series of \(H\), all of whose chief factors are either \(p\)-groups or \(p'\)-groups.

We use the subgroup classification of $\PGL_2(\F_q)$ in \cite[Appendix~A, Theorem~A.8]{HirschfeldKorchmarosTorres}.  The solvable subgroups are already bounded by $R_q,$ see   \Cref{prop:PGL2-solvable}.  Thus it remains to consider the non-solvable groups in the list.
$$\PSL_2(\F_{p^f}), \PGL_2(\F_{p^f}) \text{ with } p^f\ge4  \quad \text{and} \quad A_5 \text{ with } 5\mid (q^2-1)$$
The group $\PSL_2(\F_{p^f})$ is a non-abelian simple group \cite[Theorem~8.13]{Rotman1995} or  \cite[Appendix~A, Example~A.7(5)]{HirschfeldKorchmarosTorres}, and its order is divisible by $p$ but is not a power of $p$. Hence it is neither a $p$-group nor a $p'$-group, therefore $\PSL_2(\F_{p^f})$  with $p^f\ge4$ is not $p$-solvable.  Thus every subgroup containing $\PSL_2(\F_{p^f})$ with $p^f\ge4$ is not $p$-solvable.  Consequently, neither $\PSL_2(\F_{p^f})$ nor $\PGL_2(\F_{p^f})$ is $p$-solvable, since \(
   \PSL_2(\F_{p^f})\triangleleft \PGL_2(\F_{p^f}).
\) 

The remaining exceptional possibility is $A_5$ for $5\mid (q^2-1)$ .  Since $A_5$ is a non-abelian simple group, if it is $p$-solvable, then $A_5$ is a $p'$-group, thus $p\nmid60$, and hence $p\ge7$.  For $q=7,8$ one has $5\nmid (q^2-1)$, so $A_5$ does not occur.  Thus  $q\ge11$, and therefore \(
   |A_5|=60\le q(q-1)=R_q.
\)
\end{proof}

We now apply the method developed for solvable groups to the \(p\)-solvable case. The only additional issue is that a chief factor may be a direct product of isomorphic non-abelian simple \(p'\)-groups.

\begin{theorem}\label{thm:p-solvable}
Let $F/\F_q$ be a function field of genus $g_F\ge2$ with full constant field $\F_q$, where $q=p^e$, and let \(
   G\le\Aut(F/\F_q)
\) be a finite $p$-solvable subgroup.   Let $D_q$ be as defined in Corollary~\ref{cor:rational-exact}, and set
\[
C_q^{(p\text{-sol})}:=R_q\max\{84,D_q\}.
\]
Then
\[
|G|\le C_q^{(p\text{-sol})}(g_F-1).
\]
In particular, for every $q\ge4$,
\[
   |G|\le 21q^2(q-1)(g_F-1).
\]
\end{theorem}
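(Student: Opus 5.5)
We follow the proof of Theorem~\ref{cor:Sq-upper}, with one extra case for non-abelian chief factors. Put $F_0:=F^G$. If $g_{F_0}\ge2$, Proposition~\ref{prop:g0-ge2} gives $|G|\le g_F-1$. If $g_{F_0}=1$, the argument of Proposition~\ref{prop:g0-eq1} uses only that some place of $F_0$ is ramified with $e\ge2$, so $|G|\le4(g_F-1)$. Both bounds lie below $C_q^{(p\text{-sol})}(g_F-1)$.

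Now assume $g_{F_0}=0$. Fix a chief series $G=G_0\supsetneq\cdots\supsetneq G_s=1$ with fixed fields $F_i:=F^{G_i}$, and let $i$ be maximal with $g_{F_i}=0$. Then $F_i$ is rational and $\Gal(F_i/F_0)\cong G/G_i$ is a $p$-solvable subgroup of $\Aut(F_i/\F_q)\cong\PGL_2(\F_q)$, since quotients of $p$-solvable groups are $p$-solvable. Proposition~\ref{prop:PGL2-psolvable} therefore gives $[F_i:F_0]\le R_q$. The chief factor $N:=G_i/G_{i+1}\cong\Gal(F_{i+1}/F_i)$ is either elementary abelian or of the form $S^m$ with $S$ non-abelian simple and $p\nmid|S|$.

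\emph{Elementary abelian case.} We repeat the proof of Theorem~\ref{thm:precise-rational} verbatim. It only uses that $N$ is elementary abelian, that $F_i$ is rational, and the bound $[F_i:F_0]\le R_q$, together with Proposition~\ref{prop:PGL2-elementary} and Lemma~\ref{lem:prime-layer-large}. The case analysis of Corollary~\ref{cor:rational-exact} then yields $|G|\le R_qD_q(g_F-1)$. One point needs checking: there the relation $r\le\ell$ enters only through the bound $2r/(r-1)\le4$, which holds for every prime $r$, so nothing else is needed.

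\emph{Non-abelian case.} This is the main step. Write $N=S^m$ and pass to a subfield $K\subseteq F_{i+1}$ with $F_i\subseteq K$, as follows. Let $M:=S^{m-1}\times 1\trianglelefteq N$ and $K_1:=F_{i+1}^{M}$, so that $\Gal(K_1/F_i)\cong S$.
\begin{itemize}
\item If $g_{K_1}\ge1$, we work with $K_1$. Here $S$ is a subgroup of $\Aut(K_1/\F_q)$ with fixed field $F_i$ of genus $0$, and $S$ is a $p'$-group, so the extension $K_1/F_i$ is tame.
\item If $g_{K_1}=0$, then $S\le\PGL_2(\F_q)$ is a $p'$-group. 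By the argument of Proposition~\ref{prop:PGL2-psolvable}, $S\cong A_5$ and $60\le R_q$. In this case we instead filter $M$ down layer by layer until the genus first becomes positive, collecting a factor $60$ at each genus-zero step.
\end{itemize}
I expect this bookkeeping to be the main obstacle. The cleanest route is probably the following. Note that $G_{i+1}$ together with all of $N$ acts on $F$. So consider the group $\widetilde N:=G_i$ acting on $F$ with fixed field $F_i$ of genus $0$. Its image in $\Aut(F_{i+1})$ is $S^m$, which acts tamely.

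For the resulting tame Galois step $L/F_i$ with group $S^m$ (or its relevant subquotient), $g_{F_i}=0$ and $g_L>0$, the classical tame Hurwitz computation applies. Tamely ramified Galois covers of a genus-$0$ base have all ramification contributions of the form $1-1/e$ with $\deg\mathfrak p\ge1$. The standard Hurwitz analysis of such signatures shows that if $2g_L-2>0$, then $(2g_L-2)/[L:F_i]\ge1/42$, and if $g_L=1$ the extension is impossible for a perfect group. Moreover $g_L\ge2$ always holds here, because $S^m$ is perfect and so cannot act on a genus-one field with genus-zero quotient through the abelian or small solvable structure. Hence $[L:F_i]\le84(g_L-1)$. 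Finally, the Hurwitz formula for $F/L$ gives $[F:L]\le(g_F-1)/(g_L-1)$. Combining,
\[
|G|=[F:L]\,[L:F_i]\,[F_i:F_0]\le 84R_q(g_F-1).
\]

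The two cases together give $|G|\le R_q\max\{84,D_q\}(g_F-1)$. For $q\ge4$, the same estimates used in Theorem~\ref{cor:Sq-upper} give $\max\{84,D_q\}\le 21q$: for $q\ge4$ we have $84\le21q$, and $D_q\le12q$. Since also $R_q\le q(q-1)$ for $q\ge7$ (with $q=4,5$ checked by hand), this yields the bound $21q^2(q-1)(g_F-1)$.
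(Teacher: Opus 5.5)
Your proposal is correct. Once you settle on $L=F_{i+1}$, it is essentially the paper's proof: $[F_i:F_0]\le R_q$, the elementary abelian case handled as in Theorem~\ref{thm:precise-rational}, and in the non-abelian case $g_{F_{i+1}}\ge2$ followed by the tame bound $|S^m|\le 84(g_{F_{i+1}}-1)$. Since $g_{F_{i+1}}>0$ already follows from the maximality of $i$, the detour through $K_1=F_{i+1}^M$ and the factors of $60$ is unnecessary; and your exclusion of genus one is best made precise as the paper does it: a genus-one field over $\F_q$ has a rational place $O$, so $\Aut(F_{i+1}/\F_q)=T\rtimes\Aut(F_{i+1},O)$ with $T$ abelian and $|\Aut(F_{i+1},O)|$ dividing $24$, which is solvable and therefore cannot contain $S^m$.
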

\begin{proof}
Put \(F_0:=F^G\).  By \Cref{prop:g0-ge2,prop:g0-eq1}, the cases \(g_{F_0}\ge1\) give
\[
   |G|\le4(g_F-1),
\]
so it remains to consider \(g_{F_0}=0\). Use the notation of \Cref{thm:precise-rational}.  Thus choose a chief series
\[
   G=G_0\supsetneq G_1\supsetneq\cdots\supsetneq G_s=1,
   \qquad F_j:=F^{G_j},
\]
and let \(i\) be maximal such that \(g_{F_i}=0\).  Set \(
   A:=G_i/G_{i+1}
     \cong\Gal(F_{i+1}/F_i).
\) Since \(G/G_i\) is \(p\)-solvable and both \(F_i\) and \(F_0\) are rational, \Cref{prop:PGL2-psolvable} gives
\begin{equation}\label{eq:psol-rational-tail}
   [F_i:F_0]\le R_q.
\end{equation}
We consider the following cases for \(A\).

First, if \(A\) is solvable, then \(A\) is elementary abelian. Starting from \eqref{eq:psol-rational-tail}, the proof of \Cref{thm:precise-rational}  yields the result in \Cref{cor:rational-exact}.
\begin{equation}\label{eq:psol-elementary-case}
 |G|\le R_qD_q(g_F-1)
\end{equation}
It remains to consider the case where \(A\) is non-solvable. Since \(G\) is \(p\)-solvable, the non-solvable chief factor \(A\) has the form \(
A\cong S^m,
\) where \(S\) is a non-abelian finite simple group with \(p\nmid |S|\). Thus \(F_{i+1}/F_i\) is tame.  Moreover \(g_{F_{i+1}}\ge2\).  Indeed, if \(g_{F_{i+1}}=1\), then the elliptic function field \(F_{i+1}\) has an \(\F_q\)-rational place $O$ . By \cite[Theorem~3.1 and Remark~2]{MaXing2023},
\[
   \Aut(F_{i+1}/\F_q)=T_{F_{i+1}}\rtimes\Aut(F_{i+1},O),
\]
where \(T_{F_{i+1}}\) is abelian and \(|\Aut(F_{i+1},O)|\mid 24\). Hence \(\Aut(F_{i+1},O)\) is solvable, and therefore so is \(\Aut(F_{i+1}/\F_q)\). This contradicts the choice of \(A\).

The tame Hurwitz bound therefore yields
\[
   |A|\le84\bigl(g_{F_{i+1}}-1\bigr).
\]
On the other hand, Riemann--Hurwitz for \(F/F_{i+1}\) gives
\[
   [F:F_{i+1}]
   \le
   \frac{g_F-1}{g_{F_{i+1}}-1}.
\]
Together with \eqref{eq:psol-rational-tail}, we obtain
\[
\begin{aligned}
   |G|
   =[F:F_{i+1}]\cdot|A|\cdot[F_i:F_0] 
   \le
   \frac{g_F-1}{g_{F_{i+1}}-1}
   \cdot84\bigl(g_{F_{i+1}}-1\bigr)\cdot R_q
   =84R_q(g_F-1).
\end{aligned}
\]
Combining this with \eqref{eq:psol-elementary-case} proves the above bound.  For \(q\ge4\), the estimate follows from \Cref{prop:PGL2-psolvable} and \Cref{eq:Rq-definition} .
\end{proof}

\section{Conclusion}\label{sec:further-questions}

Table~\ref{tab:chief-factors} summarizes the additional chief factors and the corresponding upper bounds from \(p\)-groups to solvable, \(p\)-solvable, and arbitrary finite groups.

The second column lists the additional chief factors,  Indeed, the chief factors of a  \(p\)-group are all isomorphic to \(C_p\). Compared with  \(p\)-groups, solvable groups additionally allow elementary abelian chief factors \(C_\ell^{\,m}\) for primes \(\ell\neq p\). Compared with solvable groups, \(p\)-solvable groups additionally allow non-abelian chief factors \(S^m\) with \(p\nmid |S|\). Finally, arbitrary finite groups additionally allow non-abelian chief factors \(S^m\) with \(p\mid |S|\), where \(S\) is a finite simple group. 

The third column lists the corresponding linear bounds  from  \(p\)-groups through solvable and \(p\)-solvable groups. These results naturally lead us to the following analogue of the classical Hurwitz bound for function fields over $\F_q$.

\begin{conjecture}
\label{conj:all-groups-linear}
For every prime power $q$, there exists a constant $C_q>0$ such that, for every function field $F/\F_q$ of genus $g_F\ge2$ with full constant field $\F_q$ , one has
\[
   |\Aut(F/\F_q)|\le C_q(g_F-1).
\]
\end{conjecture}

\begin{table}[htbp]\label{table:Chief factors and Upper bounds.}
  \centering
  \renewcommand{\arraystretch}{0.85}
    \caption{Chief factors and Upper bounds.}
  \begin{tabular}{
    @{}
    >{\centering\arraybackslash}p{0.19\textwidth}
    >{\centering\arraybackslash}p{0.54\textwidth}
    >{\centering\arraybackslash}p{0.19\textwidth}
    @{}
  }
    \toprule
    Group class
      & \textbf{Additional chief factors}
      & Upper bound\\
    \midrule
     \(p\)-groups
      & \(C_p\)
      & $\Cref{thm:Pq-upper}$\\[1mm]
    solvable& \(C_\ell^{\,m}\), with \(\ell\ne p\) or \(m>1\)
      & $\Cref{cor:Sq-upper}$\\[1mm]
    \(p\)-solvable
      & \(S^m\), with \(p\nmid |S|\)
      & $\Cref{thm:p-solvable}$\\[1mm]
    arbitrary finite& \(S^m\), with \(p\mid |S|\)
      & $\Cref{conj:all-groups-linear}$\\
      \bottomrule
  \end{tabular}

  \label{tab:chief-factors}
\end{table}

\section{Acknowledgement}

The second and third authors were supported in part by the National Key Research and Development Program of China under Grants 2022YFA1004900, 2023YFE0123900, and the National Natural Science Foundation of China under Grants 12361141818, 12426302, 12271084 and 12571577. The third author thanks Shaowu Zheng for an introduction to the theory of chief factors of finite groups.

\bibliographystyle{plain}
\bibliography{solvable}
\end{document}